\definecolor{MyLinkColor}{rgb}{0,0,0.4} 
\newcommand{\R}{\mathbb{R}}
\newcommand{\RRM}{\mathbb{R}}
\newcommand{\N}{\mathbb{N}}
\newcommand{\bA}{\mathbb{A}}
\newcommand{\clu}{{\mathcal{U}}}
\newcommand{\cla}{\mathcal{A}}
\newcommand{\clp}{{\mathcal{P}}}
\newcommand{\kL}{{\mathcal{L}}}
\newcommand{\kH}{{\mathcal{H}}}
\newcommand{\vdiv}{\mathop{\rm div}}
\newcommand{\be}{\begin{equation}}
\newcommand{\ee}{\end{equation}}
\newcommand{\eps}{\varepsilon}
\newcommand{\e}{\varepsilon}
\newcommand{\PV}{{\rm PV}}
\newcommand{\p}{\partial}
\newcommand{\supp}{{\rm supp\,}}
\newcommand{\re}{{\rm Re\,}}
\newcommand{\oo}{{\overline\omega}}
\newcommand{\wt}{\widetilde}
\newcommand{\fks}{f_1,\ldots,f_k}
\newcommand{\fs}{f,\ldots,f}
\newcommand{\fgs}{f+g,\ldots,f+g}
\newtheorem{lemma}{Lemma}[section]
\newtheorem{thm}[lemma]{Theorem}
\newtheorem{prop}[lemma]{Proposition}
\newtheorem{cor}[lemma]{Corollary}
\theoremstyle{remark}
\newtheorem{rem}[lemma]{Remark}
\numberwithin{equation}{section}
\begin{document}

\title[Two-phase Stokes flow by capillarity in full 2D space]{Two-phase Stokes flow by capillarity in full 2D space: an approach via hydrodynamic potentials}

\author{Bogdan--Vasile Matioc}
\address{Fakult\"at f\"ur Mathematik, Universit\"at Regensburg,   93040 Regensburg, Deutschland.}
\email{bogdan.matioc@ur.de}
  
\author{Georg Prokert}
\address{ Faculty of Mathematics and Computer Science, Technical
University Eindhoven, The Netherlands.}
\email{g.prokert@tue.nl}

\subjclass[2010]{35R37; 76D07; 35K55}
\keywords{Stokes problem; Two-phase; Singular integrals; Contour integral formulation.}

\begin{abstract}
We study the two-phase Stokes flow   driven by surface tension with two fluids of equal viscosity, separated by an asymptotically flat interface with graph geometry. 
 The flow is assumed to be two-dimensional with the fluids filling the entire space.
 We prove well-posedness and parabolic smoothing in Sobolev spaces up to critical regularity. The main technical tools are an analysis of nonlinear singular integral operators arising from the hydrodynamic single-layer potential and abstract results on nonlinear parabolic evolution equations. 
\end{abstract}

\maketitle

\section{Introduction}
One of the standard methods in the analysis of moving boundary problems is the reformulation of these problems as evolution equations in function spaces to which methods of Functional Analysis can be applied, depending on the character of the problem under investigation. The difficulty of this typically consists in the fact that the resulting evolution equations are nonlocal and strongly nonlinear. 
For moving boundary problems with domains of general shape this approach typically involves the transformation of the moving domain to a fixed reference domain by an unknown, time dependent diffeomorphism, and the (explicit or implicit) use of solution operators for boundary value problems with variable coefficients on this reference domain. This approach often implies restrictions to results of perturbation type, i.e. either short-time solutions, or solutions for (in some sense) small data. 
However, this can be avoided in special situations where
\begin{itemize}
    \item the geometry is simpler (e.g. full space, with the moving boundary being a graph), and
    \item the underlying PDE is elliptic and has constant coefficients.
\end{itemize}
In such situations, one can use the classical methods of potential theory to solve the PDEs directly, i.e. without transformations of the domain, and reformulate the moving boundary problem as an evolution equation that involves nonlinear, singular integral operators. 

 This strategy  proved to be  successful for various versions of  the Muskat (or two-phase Hele-Shaw) problem, see e.g. the survey articles \cite{G17, GL20}.  
While the constant coefficient elliptic operator underlying the Muskat problem is simply the Laplacian, the related moving boundary problems of 
quasistationary Stokes flow are based on the Stokes operator (which is also elliptic in a sense that can be made precise).
Concretely,  in this paper we are interested in the following   moving boundary problem of Stokes flow  driven by the capillarity  of  the  
moving interface $t\mapsto\Gamma(t)$ between two fluid phases $\Omega^\pm(t)$ in $\mathbb{R}^2$: 
\be\label{probint}
\left.
\begin{array}{rclll}
\mu\Delta v^\pm-\nabla q^\pm&=&0&\mbox{in $\Omega^\pm(t)$,}\\
\vdiv v^\pm&=&0&\mbox{in $\Omega^\pm(t)$,}\\
v^+&=&v^-&\mbox{on $\Gamma(t)$,}\\{}
[T(v,q)]\tilde\nu&=&-\sigma\tilde\kappa\tilde\nu&\mbox{on $\Gamma(t)$,}\\
(v^\pm,q^\pm)&\to&0&\mbox{for $|x|\to\infty$,}\\
V_n&=&v\cdot\tilde \nu&\mbox{on $\Gamma(t)$.}
\end{array}\right\}
\ee
 Here, $v^\pm:\Omega^\pm(t)\longrightarrow\mathbb{R}^2$ is a vector field representing the velocity of the liquid located in~$\Omega^\pm(t)$  and $q^\pm:\Omega^\pm(t)\longrightarrow\mathbb{R}$ its pressure. 
Furthermore, $\tilde\nu$ is the unit exterior normal to~$\Omega^-(t)$ and $\tilde\kappa$ denotes the curvature of the interface.  
 Moreover, $[T(v,q)]$ denotes the jump of the stress tensor across $\Gamma(t)$, see \eqref{defjump}, \eqref{defT} below. 
 The positive constants $\mu$ and $\sigma$  denote the viscosity of the liquids and the surface tension coefficient of the interface, respectively. 
 We assume that $$\Gamma(t)=\partial\Omega^\pm(t), \qquad\Omega^+(t)\cup\Omega^-(t)\cup \Gamma(t)=\mathbb{R}^2,$$ and 
 that $\Gamma(t)$  is a graph over a suitable straight line. 
 Equation \eqref{probint}$_6$  determines the motion of the interface by prescribing  its normal velocity as coinciding with the normal component of the velocity at $\Gamma(t)$, i.e. the interface is transported by the liquid flow. 
 The interface $\Gamma(t)$ is assumed to be known at time $t=0$.
 
For the Stokes operator, is is possible to set up a treatment of boundary value problems based on so-called hydrodynamic potentials \cite{Lad63} in strict analogy to the potentials for the Laplacian. 
It is this analogy that enables  us to study  the moving boundary problem of two-phase Stokes flow driven by capillarity (at least in 2D and with 
equal viscosity in both phases) along the same lines as for the Muskat problem. 
This has first been exploited in \cite{BaDu98} to obtain an existence result for all positive times, with initial data  that are small in a space of Fourier transforms of bounded measures. 
 To the best of our knowledge, this is the only result available on two-phase Stokes flow in the unbounded geometry considered here.

It is the aim of the present paper to analyze  Problem \eqref{probint} in Sobolev spaces (up to critical regularity) in an $L_2$-based setting. 
The use of these spaces also implies that the interface is asymptotically flat. 

We will establish
\begin{itemize}
    \item existence and uniqueness of maximal solutions with initial data that are arbitrary within our phase space; 
    \item a corresponding semiflow property;
    \item parabolic smoothing up to ${\rm C}^\infty$ of solutions in time and space (away from the initial time);
    \item a criterion for global existence of solutions, or equivalently, a necessary condition for blow-up. 
\end{itemize}
Essentially, these results are obtained by applying the theory of maximal regularity for nonlinear parabolic equations in weighted H\"older spaces  of vector-valued functions presented in~\cite{L95}. 
Since the Stokes flow \eqref{probint}  is driven by capillarity, it turns out that the problem is parabolic ``everywhere'', i.e. the parabolicity condition is just positivity of the surface energy. 
 We emphasize that while for the discussion of the boundary value problem \eqref{probint}$_{1\--5}$ at a fixed time $t$ we  need to assume $H^3$-smoothness  of the interface (cf. Section~\ref{fixtime}),
the corresponding nonlinear evolution equation~\eqref{NNEP} is shown to be well-posed in  all subcritical spaces~$H^s$ with~${s>3/2}$. 
Hence, we obtain a ``weak'' solution concept  allowing for less regular initial data.
Nevertheless, for positive times all solutions are classical due to parabolic smoothing.

The structure of the paper is as follows: In Section \ref{fixtime} we consider the underlying two-phase boundary value problem for the 
Stokes equations~\eqref{probint}$_{1\--5}$ with fixed interface, and show that it is solved by the so-called hydrodynamic single-layer potential. 
We prove this by investigating its behavior near and on the interface (recovering results from \cite{Lad63} in our slightly different setting) and show that it vanishes in the far-field limit. 
This asymptotic result can be interpreted as nonoccurrence of the 2D Stokes paradoxon in our setting, 
which is essentially due to the fact that the curvature vector of the interface is a derivative (by arclength) of a vector that approaches a constant at infinity (see Eqn.~\eqref{STOE}).
In Section \ref{Sec:3} we first  rewrite our moving boundary problem as an evolution equation  for the function that parametrizes the interface between the fluids and announce our main result. 
The remainder of the section is dedicated to its proof. 
We linearize the evolution equation, and then establish its parabolic character (see  Proposition~\ref{T:GAP}). 
The localization procedure by which this is accomplished demands the main technical effort. 
Once parabolicity is established, the results follow from general facts on (fully) nonlinear problems of this type as given in \cite{L95}.

Throughout the paper, some longer proofs are deferred to appendices.

\section{\label{fixtime} The fixed time problem}
In this section we study the two-phase boundary value problem for the  Stokes equations~\eqref{probint}$_{1\--5}$ with fixed domains $\Omega^\pm$ and boundary $\partial\Omega^\pm:=\Gamma$ as defined by
\[\Omega^\pm:=\Omega_f^\pm:=\{(x_1,x_2)\in\mathbb{R}^2\,|\,x_2\gtrless f(x_1)\},
 \qquad \Gamma:=\Gamma_f:=\partial\Omega^\pm= \{(\xi,f(\xi))\,|\,\xi\in\mathbb{R}\}.\]
The function $f\in H^3(\mathbb{R})$ is  fixed. 
 Note that $\Gamma$ is the image of the  first coordinate axis under the diffeomorphism $\Xi:=\Xi_f:=({\rm id}_\mathbb{R},f)$.
 Further, let $\nu$ be the componentwise pull-back under~$\Xi$ of the unit normal on~$\Gamma$ exterior to~$\Omega^-$, i.e.
 \[\nu:=\tfrac{1}{\omega}(-f',1)^\top,
 \qquad \omega:=\omega_f:=(1+f'^2)^{1/2}.\]
 Let $\kappa:=\omega^{-3}f''\in H^{1}(\mathbb{R})$ be the pull-back under $\Xi$ of the curvature of $\Gamma$. 
 In view of
 \begin{align}\label{STOE}
 \kappa\nu=\omega^{-1}\big(\omega^{-1}(1,f')^\top\big)'
 \end{align}
 we will use the relation
 \be\label{derive}
 \kappa\nu=\omega^{-1}g',\qquad g:=g_f:=(g_1,g_2)^\top:=(\omega^{-1}-1,\omega^{-1}f')^\top=:(-\phi_1(f),\phi_2(f))^\top,
 \ee
 where
 \begin{align}\label{Phii}
 \phi_1(f)=\frac{{f'}^2}{\omega+\omega^2}\qquad\text{and}\qquad \phi_2(f)=\frac{f'}{\omega}
 \end{align}
 belong to $H^2(\R)$.

For any functions $z^\pm$ defined on $\Omega^\pm$, respectively, and having limits at some $ (\xi,f(\xi))\in\Gamma$ we will write
 \be\label{defjump}
 [z] (\xi,f(\xi)):=\lim_{ \Omega^+\ni x\to (\xi,f(\xi))}z^+(x)-
\lim_{ \Omega^-\ni x\to    (\xi,f(\xi))}z^-(x). 
\ee

We fix a common viscosity $\mu>0$ as well as a surface tension coefficient $\sigma>0$ and seek solutions
\[(v^\pm,q^\pm)\in \big({\rm C}^2(\Omega^\pm,\mathbb{R}^2)\cap {\rm C}^1(\overline{\Omega^\pm},\mathbb{R}^2)\big)
\times \big({\rm C}^1(\Omega^\pm)\cap {\rm C}(\overline{\Omega^\pm})\big)\]
to the two-phase boundary value problem 
\be\label{fixtimeeq}
\left.\begin{array}{rcll}
\mu\Delta v^\pm-\nabla q^\pm&=&0&\mbox{in $\Omega^\pm$,}\\
 \vdiv v^\pm&=&0&\mbox{in $\Omega^\pm$,}\\
 v^+&=&v^-&\mbox{on $\Gamma$,}\\
 {}[T(v,q)](\nu\circ\Xi^{-1})&=&-\sigma(\kappa\nu)\circ\Xi^{-1}&\mbox{on $\Gamma$,}\\
 (v^\pm,q^\pm) (x)&\to &0&\mbox{for $|x|\to\infty$,}
 \end{array}\right\}
\ee
 with $\Omega^\pm$ and $\Gamma$ as defined above.
Here $T(v,q)=(T_{ij}(v,q))_{1\leq i,\, j\leq 2}$ denotes the stress tensor  that is given by
\be\label{defT}T_{ij}(v,q):=-q\delta_{ij}+\mu(\partial_iv_j+\partial_j v_i).
\ee

The structure of the problem allows us to represent the solution as a hydrodynamic single-layer  potential \cite{Lad63}.
For this, we introduce  the fundamental solutions 
\[(\clu^k,\clp^k):\mathbb{R}^2\setminus\{0\}\longrightarrow\mathbb{R}^2\times\mathbb{R},\qquad  k=1,2,\]
to the Stokes equations in $\mathbb{R}^2$ by 
\begin{align*}
\clu^k&=(\clu^k_1, \clu^k_2)^\top,\\
\clu_j^k(y)&=-\frac{1}{4\pi\mu}\left(\delta_{jk}\ln\frac{1}{|y|}+
\frac{y_jy_k}{|y|^2}\right),\quad j=1,\,2,\\[1ex]
\clp^k(y)&=-\frac{1}{2\pi}\frac{y_k}{|y|^2},\quad y=(y_1,y_2)\in\RRM^2\setminus\{0\}.
\end{align*}
These functions solve (in distributional sense) the   Stokes equations
\begin{equation*}
\begin{array}{rcll}
\mu\Delta\clu^k-\nabla\clp^k&=&\delta_{ 0}e^k&\mbox{in $\mathcal{D}'(\mathbb{R}^2)$,}\\[1ex]
\vdiv\clu^k&=&0&\mbox{in $\mathcal{D}'(\mathbb{R}^2)$},
\end{array}
\end{equation*}
with  $e^1=(1,0)$ and $e^2=(0,1).$
Moreover,   differentiating the  fundamental
solutions with respect to $y_1$ and $y_2$ we get the following solutions to the homogeneous Stokes system in~${\mathbb{R}^2\setminus\{0\}}$:
 \be\label{stokesdiff}
 \begin{aligned}
     4\pi\partial_1\clu^1(y)&=\frac{1}{\mu |y|^4}
     \left(\begin{array}{c} y_1(y_1^2-y_2^2)\\y_2(y_1^2-y_2^2)\end{array}\right),&
     2\pi\partial_1\clp^1(y)&=\frac{y_1^2-y_2^2}{|y|^4},\\[1ex]
     4\pi\partial_2\clu^1(y)&=\frac{1}{\mu |y|^4}
     \left(\begin{array}{c} y_2(y_2^2+3y_1^2)\\y_1(y_2^2-y_1^2)\end{array}\right),&
     2\pi\partial_2\clp^1(y)&=\frac{2y_1y_2}{|y|^4},\\[1ex]
     4\pi\partial_1\clu^2(y)&=\frac{1}{\mu |y|^4}
     \left(\begin{array}{c} y_2(y_1^2-y_2^2)\\[1ex]
     y_1(y_1^2+3y_2^2)\end{array}\right),&
     2\pi\partial_1\clp^2(y)&=\frac{2y_1y_2}{|y|^4},\\[1ex]
      4\pi\partial_2\clu^2(y)&=\frac{1}{\mu |y|^4}
     \left(\begin{array}{c} y_1(y_2^2-y_1^2)\\y_2(y_2^2-y_1^2)\end{array}\right),&
     2\pi\partial_2\clp^2(y)&=\frac{y_2^2-y_1^2}{|y|^4}.
 \end{aligned}
 \ee

We are going to prove that the functions $(v^\pm,q^\pm):=(v,q)|_{\Omega^\pm}$ with 
\be\label{defvq}
\begin{aligned}
v(x)&:=\displaystyle\sigma\int_\mathbb{R}\partial_s\left(\clu^k(x-(s,f(s)))\right)g_k(s)\,ds,\\[1ex]
q( x)&:=\displaystyle-\sigma\int_\mathbb{R}\clp^k( x-(s,f(s)))g'_k(s)\,ds,\quad  x\in\RRM^2\setminus\Gamma,
\end{aligned}
\ee
and $g_1$, $g_2$ as defined in \eqref{derive} constitute the unique solution to \eqref{fixtimeeq}. First we check that the integrals exist.

 Observe that the kernels of the integral operators in \eqref{defvq} are smooth  with respect to~${x\in\mathbb{R}^2\setminus\Gamma}$.
 Moreover,
 $g_k'\in H^{1}(\mathbb{R})$ and 
 \[\text{$\clp^k( x-(s,f(s)))= O(s^{-1})$ \quad for $|s|\to\infty$,}\]
  so that the integrand  in \eqref{defvq}$_2$ belongs to $L_1(\mathbb{R})$.
  Furthermore, $g_k\to 0$ for $|s|\to\infty$, so we can use integration by parts to obtain
 \[q(x)=\sigma\int_\mathbb{R}\partial_s\big(\clp^k(x-(s,f(s))\big)
 g_k(s)\,ds.\]
  Recalling \eqref{stokesdiff}, we get
 \[\text{$\p_s(\clu^k(x-(s,f(s))))= O(s^{-1})$ \quad for $|s|\to\infty$,}\]
 and since $g_k\in H^{2}(\mathbb{R})$, it follows that also $v$ is well-defined.
 Altogether, we obtain the following representation for the velocity field and the pressure:
 \be\label{repvq}
 (v,q)^\top( x)=-\int_\mathbb{R}\partial_s [M( r)]g(s)\,ds
 =\int_\mathbb{R}(\partial_1 M( r)+f'(s)\partial_2 M( r))g(s)\,ds
 \ee
 for $ x\in\mathbb{R}^2\setminus\Gamma$, where 
 \begin{align}\label{defr}
     &r:=r(x,s):=x-(s,f(s)),\\[1ex]
     &M( y):=
     -\sigma\left(
     \begin{array}{cc}
     \clu^1&\clu^2\\[1ex]
     \clp^1&\clp^2
     \end{array}\right)(y),\quad y\in\R^2\setminus\{0\}.\nonumber
 \end{align}

\begin{thm}\label{T:1}
  Given $f\in H^3(\R)$,  Problem \eqref{fixtimeeq} has the unique solution $(v^\pm,q^\pm)$ given by~\eqref{defvq} or, equivalently,~\eqref{repvq}.
\end{thm}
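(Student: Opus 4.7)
The plan is to verify directly that the pair defined by \eqref{defvq} satisfies the five conditions in \eqref{fixtimeeq}, and then to establish uniqueness via an energy argument. Existence will be split into the bulk PDE in each phase, the two transmission conditions on $\Gamma$, and the far-field decay.

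For the bulk equations \eqref{fixtimeeq}$_{1,2}$ I would differentiate under the integral sign: the kernels are smooth away from $\Gamma$ and $g_k, g_k'$ are sufficiently integrable, so $(v,q)$ is a superposition of translates of $(\clu^k,\clp^k)$ and their first derivatives, all of which solve the homogeneous Stokes system on $\R^2 \setminus \{0\}$. For the transmission conditions \eqref{fixtimeeq}$_{3,4}$ I would integrate by parts to write $v(x) = -\sigma \int_\R \clu^k(x - (s,f(s)))\, g_k'(s)\, ds$. This is a classical hydrodynamic single-layer potential with arclength density $\psi = -\sigma g'/\omega$, and by \eqref{derive} we have $\psi = -\sigma\kappa\nu$. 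The jump relations of \cite{Lad63}, reproved in the current graph setting, then yield the continuity of $v$ across $\Gamma$ and $[T(v,q)]\nu = \psi$, which is precisely \eqref{fixtimeeq}$_4$.

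The delicate point is the decay condition \eqref{fixtimeeq}$_5$. In 2D a generic hydrodynamic single-layer potential does not vanish at infinity (Stokes paradox). Here, however, \eqref{STOE} expresses $\kappa\nu$ as the arclength derivative of a vector $g$ which vanishes at infinity, and in \eqref{repvq} the kernel $\partial_1 M(r) + f'(s)\partial_2 M(r)$ decays like $|r|^{-2}$. Splitting the $s$-integration into a bounded neighbourhood of $x_1$ and its complement, and exploiting $g(s) \to 0$ as $|s| \to \infty$, should yield $(v,q)(x) \to 0$ as $|x| \to \infty$. I expect this to be the main technical obstacle, since one must quantify the cancellation rather than appeal to absolute integrability.

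For uniqueness, let $(v,q)$ denote the difference of two solutions; it satisfies the homogeneous problem with no velocity or stress jump across $\Gamma$. Testing $\mu\Delta v - \nabla q = 0$ in $\Omega^\pm \cap B_R$ against $v$ and integrating by parts gives
\begin{equation*}
2\mu \int_{(\Omega^+ \cup \Omega^-) \cap B_R} |Dv|^2\, dx = \int_{\partial B_R} v \cdot T(v,q) n\, d\sigma,
\end{equation*}
where $Dv$ denotes the symmetric gradient and the $\Gamma$-contributions cancel by the homogeneous transmission conditions. Differentiating \eqref{repvq} under the integral provides analogous far-field decay for $\nabla v$ and $q$, and a bookkeeping of the powers of $|x|$ shows that the right-hand side vanishes as $R \to \infty$. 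Hence $Dv \equiv 0$ in $\Omega^+ \cup \Omega^-$; continuity of $v$ across $\Gamma$ upgrades this to all of $\R^2$, so $v$ is an infinitesimal rigid motion, and the decay forces $v \equiv 0$. Then $\nabla q \equiv 0$ in each phase, and the decay yields $q \equiv 0$.
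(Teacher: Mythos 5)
Your treatment of existence (bulk PDE via differentiation under the integral, transmission conditions via the single-layer jump relations, far-field decay via the arclength-derivative structure of the density) is in the same spirit as the paper's proof, whose technical details are deferred to Appendices~\ref{appA} and~\ref{appB}. The genuine problem is in your uniqueness argument.

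You propose an energy identity over $(\Omega^+\cup\Omega^-)\cap B_R$ and claim the boundary term $\int_{\partial B_R} v\cdot T(v,q)n\,d\sigma$ tends to zero by ``differentiating \eqref{repvq} under the integral'' to get decay of $\nabla v$ and $q$. But \eqref{repvq} is the formula for the \emph{candidate} solution; when proving uniqueness you must consider the difference $(v,q)$ of two \emph{arbitrary} solutions in the class $\big({\rm C}^2(\Omega^\pm)\cap {\rm C}^1(\overline{\Omega^\pm})\big)\times\big({\rm C}^1(\Omega^\pm)\cap {\rm C}(\overline{\Omega^\pm})\big)$ with $(v,q)\to 0$ at infinity, and for such a difference you have no representation formula and hence no control whatsoever on $\nabla v$ on $\partial B_R$, nor any rate of decay for $v$ or $q$. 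Since $T(v,q)n$ involves $\nabla v$, the boundary term is not controlled and the argument does not close. (Even pointwise decay $v\to 0$ with no rate, combined with totally unbounded $\nabla v$ on $\partial B_R$, is consistent with the hypotheses.)

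The paper avoids this by a distributional Liouville argument that needs no rate and no gradient control: glue $(V,Q):=\mathbf{1}_{\Omega^+}(u^+,p^+)+\mathbf{1}_{\Omega^-}(u^-,p^-)$, compute distributional derivatives, and use the homogeneous jump conditions to show $\mu\Delta V-\nabla Q=0$ in $\mathcal D'(\mathbb R^2)$ together with $\vdiv V=0$. Taking divergence gives $\Delta Q=0$ in $\mathcal D'(\mathbb R^2)$, so $Q$ is an entire harmonic function; since $Q\to 0$ at infinity (hence $Q$ is bounded), Liouville gives $Q\equiv 0$, and then $V_1,V_2$ are entire harmonic and decaying, hence zero. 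This only uses the qualitative decay $(v,q)\to 0$ that the function class actually provides, which is exactly why it is the right tool here. If you want to keep an energy-style argument you would need to first prove a priori decay estimates for $\nabla v$ and $q$ for arbitrary solutions, which is substantial extra work compared to the Liouville route.
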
 
\begin{proof}
1. $(v^\pm,q^\pm)$ solves the Stokes equations:

 Denote the integrand in \eqref{repvq} by $J=[(x,s)\mapsto J(x,s)]$, 
 \[ J:(\mathbb{R}^2\setminus\Gamma)\times\mathbb{R}\longrightarrow \mathbb{R}^2\times\mathbb{R}.\]
 Any partial derivative  $\partial_x^\alpha J$
 can be dominated by an absolutely integrable function with respect to $s$, locally uniformly in $x$, so that differentiation with respect to $x$ and integration with respect to $s$ can be interchanged.
In particular, it follows that 
$$ (v^\pm,q^\pm)\in {\rm C}^\infty(\Omega^\pm,\mathbb{R}^2)\times {\rm C}^\infty(\Omega^\pm).$$
 As the columns of
 $\partial_1 M$, $\partial_2 M$ represent solutions to the homogeneous Stokes equations (cf.~\eqref{stokesdiff}),
  $(v^\pm,q^\pm)$ is also a solution to these equations on $\mathbb{R}^2\setminus\Gamma$.
 \pagebreak
 
 2.  Uniqueness:
  
 We have to show that  any solution 
 \begin{align*}
 (u^\pm,p^\pm)\in \big({\rm C}^2(\Omega^\pm,\mathbb{R}^2)\cap {\rm C}^1(\overline{\Omega^\pm},\mathbb{R}^2)\big)
\times \big({\rm C}^1(\Omega^\pm)\cap {\rm C}(\overline{\Omega^\pm})\big)
 \end{align*}
 to
 \[
\left.\begin{array}{rcll}
\mu\Delta u^\pm-\nabla p^\pm&=&0&\mbox{in $\Omega^\pm$,}\\
 \vdiv u^\pm&=&0&\mbox{in $\Omega^\pm$,}\\
 u^+&=&u^-&\mbox{on $\Gamma$,}\\
 {}[T(u,p)]{\widetilde\nu}&=&0&\mbox{on $\Gamma$,}\\
 (u,p) (x)&\to &0&\mbox{for $|x|\to\infty$,}
 \end{array}\right\}\]
 where $\widetilde\nu:=\nu\circ\Xi^{-1}$, is identically zero.
 Let $\widetilde\tau:=(\omega^{-1}(1,f')^\top)\circ\Xi^{-1}$ be the unit tangential vector field along $\Gamma$, oriented to the right.  Observe first that 
 \[T(u^\pm,p^\pm){\widetilde\nu}=\mu(\partial_{\widetilde\nu} u^\pm
 +\partial_{\widetilde\tau}(u_2,\,-u_1)^\top+{\widetilde\nu}\vdiv u^\pm)-p^\pm{\widetilde\nu},\]
 so under our assumptions
 \be\label{jumpT}
 [T(u,p)]{\widetilde\nu}=\mu[\partial_{\widetilde\nu} u]-[p]{\widetilde\nu}=0.
 \ee
We now define
 \[(V,Q):={\bf 1}_{\Omega^+}(u^+,p^+)+{\bf 1}_{\Omega^-}(u^-,p^-)\in L_{1,\rm loc}(\mathbb{R}^2,\mathbb{R}^2\times\mathbb{R}).\]
 Taking distributional derivatives and using the continuity of $u$ across $\Gamma$ yields
 \begin{align*}
     \Delta V_i&={\bf 1}_{\Omega^+}\Delta u_i^++{\bf 1}_{\Omega^-}\Delta u_i^-+[\partial_{\wt \nu} u_i]\delta_\Gamma,\\
\partial_i Q&={\bf 1}_{\Omega^+}\partial_i p^+
+{\bf 1}_{\Omega^-}\partial_i p^-+[p]{\widetilde\nu}_i\delta_\Gamma,\\
\vdiv V&=0,
 \end{align*}
  where, given $a\in L_{1,\rm loc}(\Gamma)$, the distribution $a\delta_{\Gamma}$ is defined by
 $$\langle a\delta_\Gamma|\phi\rangle:=\int_\Gamma a\phi\,d\Gamma,\qquad \phi\in C_0^\infty(\mathbb{R}^2).$$
  So, from this and \eqref{jumpT} we get
 \[\mu\Delta V-\nabla Q=0\qquad\text{in $\mathcal{D}'(\mathbb{R}^2)$}.\]
  In particular, taking the divergence of this equation yields $\Delta Q=0$, i.e. $Q$ is a harmonic function on the full space $\mathbb{R}^2$, and the asymptotic condition implies $Q=0$ via Liouville's theorem. This implies in turn that $V_1$ and $V_2$ are harmonic, and are therefore zero by the same argument.
 
 3.  The behavior of $(v^\pm,q^\pm)$ near $\Gamma$ is addressed in Appendix~\ref{appA}. In particular, it is shown that
 $(v^\pm,q^\pm)\in   {\rm C}^1(\overline{\Omega^\pm},\mathbb{R}^2) \times   {\rm C}(\overline{\Omega^\pm}) $ satisfies   Eqns. \eqref{fixtimeeq}$_{3-4}$.

 4.  The far-field boundary condition   \eqref{fixtimeeq}$_{5}$ is established in Appendix \ref{appB}.
\end{proof}

\pagebreak

 \section{The  evolution problem}\label{Sec:3}
 In the first part of this section we introduce some notation which is then used to recast the Stokes problem \eqref{probint} as an evolution problem for $f$ only, see \eqref{NNEP} below.
In the second part we  establish our  main result stated in Theorem~\ref{MT1}.

\subsection{\label{subsec31} A class of singular integral operators}
 We first introduce a class of multilinear singular integral operators which are  needed in the second part of this section.
Given~${n,\,m\in\N}$ and  Lipschitz continuous  functions $a_1,\ldots, a_{m},\, b_1, \ldots, b_n:\mathbb{R}\longrightarrow\mathbb{R}$,  denote by $B_{n,m}$ the singular integral   operator 
\begin{equation}\label{BNM}
B_{n,m}(a_1,\ldots, a_m)[b_1,\ldots,b_n,h]( \xi):=\PV\int_\mathbb{R}  \frac{h( \xi- \eta)}{ \eta}
\cfrac{\prod_{i=1}^{n}\big(\delta_{[\xi,\eta]} b_i / \eta\big)}{\prod_{i=1}^{m}\big[1+\big(\delta_{[ \xi, \eta]}  a_i / \eta\big)^2\big]}\, d \eta,
\end{equation}
where $\delta_{[ \xi, \eta]}u:=u(\xi)-u(\xi- \eta)$, and for brevity
\be\label{defB0}
B^0_{n,m}(f)[h]:=B_{n,m}(f,\ldots f)[f,\ldots,f,h]
\ee
(with the appropriate number of identical arguments $f$ filled in).
Here $\PV$ denotes the principle value.
Below we write ${\rm C}^{1-}(X,Y)$ for the space of locally Lipschitz maps from $X$ to $Y$.
 Furthermore, given Banach spaces $X$ and $Y$, we let $\kL^k_{\rm sym}(X,Y)$ denote the space of $k$-linear, bounded symmetric maps $A:\;X^k\longrightarrow Y$.
The following properties are extensively used in our analysis.

\begin{lemma}\label{L:MP0}

$$
$$

\vspace{-0.5cm}
\begin{itemize}
\item[(i)] Given  Lipschitz continuous  functions $a_1,\ldots, a_{m},\, b_1, \ldots, b_n:\mathbb{R}\longrightarrow\mathbb{R}$,  there exists a constant~$C$ depending only 
on $n,\, m$ and $\max_{i=1,\ldots, m}\|a_i'\|_{\infty}$, such that
 $$\|B_{n,m}(a_1,\ldots, a_m)[b_1,\ldots,b_n,\,\cdot\,]\|_{\kL(L_2(\mathbb{R}))}\leq C\prod_{i=1}^{n} \|b_i'\|_{\infty}.$$

 Moreover,   $B_{n,m}\in {\rm C}^{1-}((W^1_\infty(\mathbb{R}))^{m},\kL^n_{\rm sym}(W^1_\infty(\mathbb{R}), \kL(L_2(\mathbb{R})))).$\\[-1ex]
 \item[(ii)] Given $s\in(3/2 ,2)$, there exists a constant C,
  depending only on $n,\, m,\, s$,  and $\max_{1\leq i\leq m}\|a_i\|_{H^s},$ such that
\begin{align*} 
\| B_{n,m}(a_1,\ldots, a_{m})[b_1,\ldots, b_n,h]\|_{H^{s-1}}\leq C \|h\|_{H^{s-1}}\prod_{i=1}^{n}\|b_i\|_{H^{s}}
\end{align*}
for all $a_1,\ldots, a_m,\, b_1,\ldots, b_n\in H^s(\mathbb{R})$ and $h\in H^{s-1}(\mathbb{R}). $

Moreover,   $  B_{n,m}\in {\rm C}^{1-}((H^s(\mathbb{R}))^m, \kL^n_{\rm sym}(H^s(\mathbb{R}), \kL(H^{s-1}(\mathbb{R})))).$ \\[-1ex]

\item[(iii)]  Let  $n\geq 1$ and $3/2<s'<s<2$  be given. 
  There exists a constant  $C$, depending only on $n,\, m$, $s$, $s'$,  and $\max_{1\leq i\leq m}\|a_i\|_{H^s}$, such that
\begin{equation*} 
\begin{aligned} 
&\| B_{n,m}(a_1,\ldots, a_{m})[b_1,\ldots, b_n,h] -h B_{n-1,m}(a_1,\ldots, a_{m})[b_2,\ldots, b_n,b_1']\|_{H^{s-1}}\\[1ex]
&\hspace{3cm}\leq C \|b_1\|_{H^{s'}}\|h\|_{H^{s-1}}\prod_{i=2}^{n}\|b_i\|_{H^s}
\end{aligned}
\end{equation*}
for all $a_1,\ldots, a_m,\, b_1,\ldots, b_n\in H^s(\mathbb{R})$ and $h\in H^{s-1}(\mathbb{R}).$
\end{itemize}
\end{lemma}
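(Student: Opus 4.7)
The three parts are intertwined: (i) is the $L_2$-heart, a multilinear Calder\'on-commutator estimate; (ii) lifts (i) to $H^{s-1}$ by an argument that exploits translation invariance; (iii) is a refined commutator identity that enters crucially into the analysis of the linearized evolution equation.

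\emph{Plan for (i).} I would recognise $B_{n,m}(a_1,\ldots,a_m)[b_1,\ldots,b_n,\,\cdot\,]$ as a multilinear singular integral of Calder\'on-commutator type. Using
\[\frac{1}{1+X^2}=\frac{1}{2i}\Bigl(\frac{1}{X-i}-\frac{1}{X+i}\Bigr)\]
together with the Coifman-McIntosh-Meyer analysis of the Cauchy integral on Lipschitz graphs, each resolvent factor $(1\pm i\delta a_j/\eta)^{-1}$ may be expanded in an absolutely summable series of Calder\'on commutators. The classical $L_2$-boundedness of multilinear Calder\'on commutators then gives the claim, with constants polynomial in $\max_j\|a_j'\|_\infty$ and linear in each $\|b_i'\|_\infty$. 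The ${\rm C}^{1-}$-dependence on the $a_j$'s is obtained by formally differentiating with respect to $a_j$: the derivative is again of $B_{n+2,m+1}$-type, hence $L_2$-bounded by the estimate just established.

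\emph{Plan for (ii).} I would combine (i) with the Slobodetskii-type characterisation
\[\|u\|_{H^{s-1}}^2\sim\|u\|_{L_2}^2+\int_{\mathbb{R}}\frac{\|\tau_\zeta u-u\|_{L_2}^2}{|\zeta|^{2s-1}}\,d\zeta,\qquad \tau_\zeta u(\xi):=u(\xi-\zeta).\]
The translation identity $\tau_\zeta B_{n,m}(a)[b_1,\ldots,b_n,h]=B_{n,m}(\tau_\zeta a)[\tau_\zeta b_1,\ldots,\tau_\zeta b_n,\tau_\zeta h]$ lets one expand $\tau_\zeta B-B$ telescopically, each summand being a $B_{n,m}$-type operator in which a single argument is replaced by a shift-difference. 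Estimating these via (i), together with the sharp Fourier-analytic bound $\|\tau_\zeta h-h\|_{L_2}\lesssim|\zeta|^{s-1}\|h\|_{H^{s-1}}$ and algebraic manipulations that absorb the $\zeta$-difference in the $a_i$- and $b_i$-slots as an extra factor of $|\eta|$ in the kernel (so as to avoid the non-integrable blow-up near $\zeta=0$ that naive $W^1_\infty$-H\"older estimates would produce), then integration in $\zeta$ yields the claim. Lipschitz continuity in the $a_j$'s follows by Fr\'echet differentiation as in (i).

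\emph{Main obstacle: (iii).} The proof will rest on the algebraic decomposition
\[\frac{h(\xi-\eta)}{\eta}\frac{\delta b_1}{\eta}-\frac{h(\xi)}{\eta}b_1'(\xi-\eta)=-b_1'(\xi-\eta)\frac{\delta h}{\eta}+\frac{h(\xi-\eta)}{\eta}E(\xi,\eta),\]
with $\delta h:=h(\xi)-h(\xi-\eta)$ and
\[E(\xi,\eta):=\frac{\delta b_1}{\eta}-b_1'(\xi-\eta)=\int_0^1[b_1'(\xi-t\eta)-b_1'(\xi-\eta)]\,dt\]
by the fundamental theorem of calculus. This splits the difference appearing in (iii) into two pieces $K_1+K_2$. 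The first collapses into
\[K_1=B_{n-1,m}(a)[b_2,\ldots,b_n,h b_1']-h\cdot B_{n-1,m}(a)[b_2,\ldots,b_n,b_1'],\]
i.e.\ the commutator of multiplication by $h$ with $B_{n-1,m}(a)[b_2,\ldots,b_n,\cdot]$, evaluated at $b_1'$. Estimating $K_1$ in $H^{s-1}$ with only $\|b_1\|_{H^{s'}}\|h\|_{H^{s-1}}$ on the right-hand side is the hard point; the plan is to iterate the same splitting (so that the subtraction cancels exactly the leading-order singularity of the underlying Calder\'on-type commutator) and then apply (ii) to the resulting less-singular operators. For $K_2$, the embedding $H^{s'}(\mathbb{R})\hookrightarrow {\rm C}^{s'-1/2}(\mathbb{R})$ yields the H\"older bound $|E(\xi,\eta)|\lesssim\|b_1\|_{H^{s'}}|\eta|^{s'-3/2}$, which renders the effective kernel strictly less singular than $1/\eta$; a variant of the translation argument from (ii), now carrying the extra regularity gain, provides the required $H^{s-1}$-estimate with the stated dependence on $\|b_1\|_{H^{s'}}$.
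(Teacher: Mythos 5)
The paper itself does not give a self-contained proof of this lemma: it is established by citation, namely (i) to \cite[Lemma~3.1]{MBV18} and (ii), (iii) to \cite[Lemmas~2.5, 2.6]{AbMa20x}. Your plan is essentially a reconstruction of the arguments in those references, and at the level of the main ideas it matches them: $L_2$-boundedness via the Coifman--McIntosh--Meyer/Cauchy-integral machinery for (i), lifting to $H^{s-1}$ via the translation (Slobodetskii) characterization for (ii), and the algebraic kernel splitting plus a commutator for (iii). Your identity producing $K_1+K_2$ is correct, as is the H\"older bound $|E(\xi,\eta)|\lesssim\|b_1\|_{H^{s'}}|\eta|^{s'-3/2}$.

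There is, however, a genuine gap in the treatment of $K_1$. Writing $K_1=-[h,T]b_1'$ with $T=B_{n-1,m}(a)[b_2,\ldots,b_n,\cdot]$, a direct appeal to part (ii) (even ``iterated'') can only yield the bound $\|b_1'h\|_{H^{s-1}}\lesssim\|b_1\|_{H^s}\|h\|_{H^{s-1}}$ via the algebra property, i.e.\ with $\|b_1\|_{H^s}$ rather than the required $\|b_1\|_{H^{s'}}$; ``iterating the same splitting'' does not by itself extract the weaker dependence. What is actually needed, and what the cited reference relies on, is a commutator \emph{regularity-gain} estimate: since $h\in H^{s-1}\hookrightarrow {\rm C}^{s-3/2}$, the commutator kernel carries an extra factor $|\eta|^{s-3/2}$, so $[h,T]$ gains $s-3/2$ derivatives; combined with $b_1'\in H^{s'-1}$ and $s'>3/2$ this lands in $H^{s'-1+s-3/2}\hookrightarrow H^{s-1}$ with the stated norm dependence. (This is exactly the mechanism behind the paper's Lemma~\ref{L:B0}, which the student does not invoke.) You should make this estimate explicit rather than gesture at an iteration. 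Separately, the claimed ${\rm C}^{1-}$-dependence in (i) and (ii) is only addressed by ``formal differentiation''; a difference-quotient estimate, e.g.\ of the type performed in the proof of Lemma~\ref{Bdiff}, would be needed to turn this into a proof.
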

\begin{proof}
The claim (i) is established in \cite[Lemma 3.1]{MBV18}, while the properties (ii) and (iii) are proven in \cite[Lemmas~2.5 and~2.6]{AbMa20x}.
\end{proof}

\subsection{Formulation of the evolution equation and the main result}

In view of Theorem~\ref{T:1} we may recast the two-phase Stokes moving boundary problem \eqref{probint} as a nonlinear and nonlocal evolution problem of the form
\be\label{evol0}
\frac{df}{dt}(t)=-f'(t)v_1|_\Gamma\circ\Xi+v_2|_\Gamma\circ\Xi
\ee
with $v=(v_1,v_2)$  given by~\eqref{defvq}.  As shown in Lemma \ref{neargamma}, the extension of $v$ to $\Gamma$ exists and is given by
\[
\begin{aligned}
 \hspace{-0.2cm} v_1|_\Gamma\circ\Xi&=\frac{(B_{2,2}^0(f)-B_{0,2}^0(f))[g_1-f'g_2]-B_{1,2}^0(f)[3f'g_1+g_2] -B_{3,2}^0(f)[f'g_1-g_2]}{4\pi\mu \sigma^{-1}},\\[1ex]
  \hspace{-0.2cm} v_2|_\Gamma\circ\Xi&=\frac{B_{0,2}^0(f)[f'g_1-g_2]+(B_{3,2}^0(f)-B_{1,2}^0(f))[g_1-f'g_2]-B_{2,2}^0(f)[f'g_1+3g_2]}{4\pi\mu  \sigma^{-1}},
\end{aligned}\]
with $g_j$ from \eqref{derive}.

Therefore \eqref{evol0} can be written as an evolution equation for $f$ in the form
\begin{align}\label{NNEP}
\frac{df}{dt}(t)=\Psi(f(t)),\quad t\geq0,\qquad f(0)=f_0,
\end{align}
with
\begin{align}\label{PHI}
\Psi(f)&:=-\frac{\sigma}{4\pi\mu}f'\Psi_1(f)+\frac{\sigma}{4\pi\mu}\Psi_2(f),
\end{align}
 where the nonlinear operators $\Psi_j$, $j=1,\,2$, are defined by
\begin{align*}
\Psi_1(f)&:=(B^0_{0,2}(f)-B^0_{2,2}(f))[\phi_1+f'\phi_2]+B^0_{1,2}(f)[3f'\phi_1-\phi_2]+B^0_{3,2}(f)[f'\phi_1+\phi_2],\\[1ex]
\Psi_2(f)&:=-B^0_{0,2}(f)[f'\phi_1+\phi_2]+(B^0_{1,2}(f)-B^0_{3,2}(f))[\phi_1+f'\phi_2]+B^0_{2,2}(f)[f'\phi_1-3\phi_2].
\end{align*}
We recall from \eqref{Phii} the shorthand notations
\[
\phi_1(f)=\frac{f'^2}{\sqrt{1+f'^2}+1+f'^2}\quad\text{and}\quad \phi_2(f)=\frac{f'}{\sqrt{1+f'^2}}.
\]

 The following theorem contains the main results of this paper.
\begin{thm}\label{MT1} Let  $s\in(3/2,2) $ be given.
Then, the following  statements hold true:
\begin{itemize}
\item[(i)]  {\em (Well-posedness)}  Given $f_0\in H^{s}(\mathbb{R})$, there exists a unique maximal solution 
\[f=f(\cdot;f_0)\in {\rm C}([0,T_+),  H^{s}(\mathbb{R}))\cap {\rm C}^1([0,T_+), H^{s-1}(\mathbb{R})),\]
where $T_+=T_+(f_0)\in (0,\infty]$, to \eqref{NNEP}. 
Moreover, $[(t,f_0)\mapsto  f(t;f_0)]$ defines a  semiflow on $H^{s}(\mathbb{R})$.\\[-2.2ex]
\item[(ii)]  {\em (Parabolic smoothing)} 
\begin{itemize}
\item[(iia)]  The map $[(t, \xi)\mapsto  f(t)(\xi)]:(0,T_+)\times\mathbb{R}\longrightarrow\mathbb{R}$ is a ${\rm C}^\infty$-function. 
\item[(iib)] For any $k\in\N$, we have $f\in {\rm C}^\infty ((0,T_+), H^k(\mathbb{R})).$\\[-2ex]
\end{itemize} 
\item[(iii)]  {\em (Global existence)} If 
$$\sup_{[0,T]\cap [0,T_+(f_0))} \|f(t)\|_{H^s}<\infty$$
for each $T>0$, then $T_+(f_0)=+\infty.$
\end{itemize} 
\end{thm}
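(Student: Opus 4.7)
The plan is to realize \eqref{NNEP} as a fully nonlinear parabolic Cauchy problem on the little H\"older scale adapted to $H^s$ and to invoke the abstract theory of Lunardi \cite{L95}. Conceptually, I would reduce everything to verifying two hypotheses: (a) $\Psi$ is smooth as a map $H^s(\R)\to H^{s-1}(\R)$ with locally Lipschitz Fr\'echet derivative, and (b) the linearization $\partial\Psi(f_0)$ at every $f_0\in H^s(\R)$ generates an analytic semigroup on $H^{s-1}(\R)$ with domain $H^s(\R)$, uniformly in a neighborhood of $f_0$. Once (a) and (b) are in place, items (i), (ii), (iii) follow from the standard results in \cite{L95} on fully nonlinear parabolic evolution equations, namely the well-posedness and semiflow theorem, the parabolic smoothing theorem (bootstrapping through interpolation spaces and a parameter trick for the temporal regularity, combined with the translation invariance of $\Psi$ for the spatial smoothing), and the criterion that a solution can only cease to exist in finite time if its $H^s$-norm blows up.

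For step (a), I would read off the smoothness of $\Psi_1,\Psi_2$ from Lemma \ref{L:MP0}(ii) together with the observation that $\phi_1, \phi_2$ in \eqref{Phii} depend smoothly on $f'$ as maps $H^s\to H^s$ (since $H^s$ is a Banach algebra for $s>1/2$ and the functions $x\mapsto x^2/(\sqrt{1+x^2}+1+x^2)$ and $x\mapsto x/\sqrt{1+x^2}$ are smooth); combined with the product rule and the multilinear and locally-Lipschitz character of $B^0_{n,m}$, this gives $\Psi\in {\rm C}^{1-}(H^s,H^{s-1})$ and in fact real-analyticity of $\Psi$.

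The hard step is (b), and this is exactly the content of the announced Proposition \ref{T:GAP}. My plan is to compute $\partial\Psi(f_0)h$ explicitly as a sum of terms, each of which is either $B^0_{n,m}(f_0)$ applied to an expression in $h$, $h'$, or (through Lemma \ref{L:MP0}(iii)) a multiplication operator times a derivative $B^0_{n-1,m}(f_0)[\ldots,h']$. To analyze this, I would freeze coefficients through a partition of unity $\{\chi_j^\e\}$ adapted to small scales $\e$: for each $\xi_0\in\R$ one replaces $f_0$ by its affine approximation $\tau\mapsto f_0(\xi_0)+f_0'(\xi_0)\tau$, and checks that the difference between $\chi_j^\e\,\partial\Psi(f_0)h$ and $\partial\Psi(\ell_j)(\chi_j^\e h)$, modulo lower-order operators mapping $H^{s-1}\to H^{s-1}$, can be made arbitrarily small in operator norm by taking $\e\to 0$. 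The point of this localization is that for an affine interface all principal value integrals are convolutions with explicit Calder\'on--Zygmund kernels on $\R$, so their Fourier symbols can be computed by elementary residue calculus; the symbol of the principal part is real, negative, homogeneous of degree $+1$ in the frequency, and depends continuously on $f_0'(\xi_0)$. Combined with the localization estimates, this yields that $\partial\Psi(f_0)-\lambda$ is invertible from $H^s$ onto $H^{s-1}$ for $\lambda$ in a sector containing $\{\mathrm{Re}\,\lambda\geq \omega\}$, with the resolvent estimate $\|(\lambda-\partial\Psi(f_0))^{-1}\|\leq C/|\lambda-\omega|$ required for generation of an analytic semigroup.

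The main obstacle is precisely this localization/symbol computation, since the operators $B^0_{n,m}$ depend nonlinearly and nonlocally on $f_0$ and several cancellations must be tracked carefully to isolate the leading order; the algebraic combination of the four terms defining $\Psi_1$ and the four defining $\Psi_2$ (together with the $-f'\Psi_1+\Psi_2$ combination in \eqref{PHI}) has to produce a negative principal symbol proportional to $\sigma|\xi|^{?}$, which embodies the parabolic nature of capillary-driven Stokes flow. All remaining assertions in Theorem \ref{MT1} are then consequences of the abstract machinery: the semiflow property and uniqueness come from the local solution theory, the spatial smoothing from applying the theory with $f_0$ translated and differentiated (using translation invariance of $\Psi$), the temporal smoothing from Angenent's parameter trick applied in the analytic setting, and the blow-up alternative (iii) from the standard continuation argument that a bounded $H^s$-trajectory can always be extended past its supremum time.
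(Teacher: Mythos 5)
Your proposal matches the paper's strategy essentially step for step: smoothness of $\Psi$ via Lemma \ref{L:MP0} and the chain/product rules (Lemma \ref{L:REG}), the generator property via freezing of coefficients with an $\e$-localization family and comparison to explicit Fourier multipliers of order one (Propositions \ref{T:AP} and \ref{T:GAP}), and then Lunardi's abstract fully nonlinear theory together with translation invariance and the parameter trick for items (i)--(iii). The only place where the paper needs an extra argument that your plan does not flag is the uniqueness statement: \cite[Theorem 8.1.1]{L95} gives uniqueness only within a weighted H\"older class ${\rm C}^\alpha_\alpha((0,T],H^s)$, and the paper upgrades this to uniqueness in ${\rm C}([0,T],H^s)\cap{\rm C}^1([0,T],H^{s-1})$ by an interpolation argument exploiting autonomy; moreover, to close the homotopy argument it uses the method of continuity between the frozen operators and the explicitly computable multiplier at $\tau=0$, rather than a direct resolvent construction at $\tau=1$.
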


\begin{rem}
If $f$ is a solution to \eqref{NNEP}, then, given $\lambda>0$, also
\[
f_\lambda(t,\xi):=\lambda^{-1}f(\lambda t,\lambda  \xi),
\]
is a solution to \eqref{NNEP} (with initial datum $\lambda^{-1}f(0,\cdot)$). 
This property identifies~$H^{3/2}(\mathbb{R})$ as a critical space for the evolution problem \eqref{NNEP}.
Therefore, our result in Theorem~\ref{MT1}   covers all subcritical spaces. 
\end{rem}

\begin{rem}
We expect the solutions to be even analytic in space and time away from~$t=0$. 
However, we prefer to formulate and prove our result in the ${\rm C}^\infty$-class, refraining from the considerable technicalities needed for a proof of the analytic counterpart of Lemma {\rm\ref{L:REG}} below 
(see \cite[Proposition  5.1]{MBV19} for a related  analyticity result). 
\end{rem}

In order to study the mapping properties of the operator $\Psi$ we need the following lemmas.

\begin{lemma}\label{L:MP1}
Given $s\in(3/2,2)$, we have 
$\phi_i\in {\rm C}^\infty(H^{s}(\mathbb{R}), H^{s-1}(\mathbb{R}))$, $i=1,\, 2$.
Moreover, given~${f_0\in H^s(\mathbb{R}),}$ the Fr\'echet derivative $\p\phi_i(f_0)$ is given by 
$$\p\phi_i(f_0)=a_i(f_0)\frac{d}{d\xi},$$
with $a_i$ defined by
\[
a_1(f_0):=\frac{f_0'(2+f_0'^2+2\sqrt{1+f_0'^2})}{\sqrt{1+f_0'^2}(\sqrt{1+f_0'^2}+1+f_0'^2)^2}\qquad\text{and}\qquad a_2(f_0):=\frac{1}{(1+f_0'^2)^{3/2}}.
\]
\end{lemma}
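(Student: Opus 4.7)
The plan is to write each $\phi_i$ as the composition of the linear differentiation operator $\partial_\xi : H^s(\mathbb{R}) \to H^{s-1}(\mathbb{R})$ with a scalar Nemytskii-type operator induced by a smooth function, and then invoke the standard algebra/superposition calculus on $H^{s-1}(\mathbb{R})$ for $s-1 \in (1/2,1)$.

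First I would observe that $\phi_i(f) = F_i(f')$ with
\[
F_1(x) := \frac{x^2}{\sqrt{1+x^2}+1+x^2}, \qquad F_2(x) := \frac{x}{\sqrt{1+x^2}},
\]
both lying in ${\rm C}^\infty(\mathbb{R})$ and satisfying $F_i(0) = 0$. Since $s - 1 > 1/2$, $H^{s-1}(\mathbb{R})$ embeds continuously into $L_\infty(\mathbb{R})$ and is a Banach algebra. By the standard theory of superposition operators on Sobolev spaces above the algebra threshold (e.g.\ Runst--Sickel), the map $u \mapsto F_i(u)$ is of class ${\rm C}^\infty$ from $H^{s-1}(\mathbb{R})$ into itself, with Fr\'echet derivative given by pointwise multiplication by $F_i'(u)$. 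Composing with the bounded linear operator $\partial_\xi$ yields $\phi_i \in {\rm C}^\infty(H^s(\mathbb{R}), H^{s-1}(\mathbb{R}))$, and the chain rule gives
\[
\partial\phi_i(f_0)[h] = F_i'(f_0')\cdot h' = F_i'(f_0')\cdot\frac{d}{d\xi}h.
\]

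It then remains to check by direct calculation that $F_i'(f_0')$ coincides with the $a_i(f_0)$ stated in the lemma. Writing $\omega := \sqrt{1+f_0'^2}$ and using $f_0'^2 = \omega^2 - 1 = (\omega-1)(\omega+1)$, one has the simplification $F_1(x) = 1 - (1+x^2)^{-1/2}$, hence
\[
F_1'(x) = \frac{x}{(1+x^2)^{3/2}}, \qquad F_2'(x) = \frac{1}{(1+x^2)^{3/2}}.
\]
This immediately yields $a_2(f_0) = \omega^{-3}$, matching the stated formula. For $a_1$, the numerator $f_0'(2+f_0'^2+2\omega)$ collapses to $f_0'(1+\omega)^2$ and the denominator $\omega(\omega+\omega^2)^2$ equals $\omega^3(1+\omega)^2$, so the expression in the lemma reduces to $f_0'/\omega^3 = F_1'(f_0')$.

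The only non-routine ingredient is the smoothness of the superposition $u \mapsto F_i(u)$ on $H^{s-1}(\mathbb{R})$; apart from that, everything reduces to the chain rule and an elementary algebraic simplification. I expect no genuine obstacle: the embedding $H^{s-1}\hookrightarrow L_\infty$ ensures that $f_0'$ takes values in a bounded subset of $\mathbb{R}$ on which $F_i$ and all its derivatives are uniformly bounded, which in turn yields the required Fr\'echet differentiability to all orders via iterated application of the multiplication estimate in the Banach algebra $H^{s-1}(\mathbb{R})$.
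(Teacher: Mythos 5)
Your argument is correct, but it takes a genuinely different route from the paper. You invoke the general theory of superposition (Nemytskii) operators on $H^{s-1}(\mathbb{R})$ with $s-1>1/2$ --- essentially a Moser-type result, quoted from Runst--Sickel --- to get ${\rm C}^\infty$-smoothness of $u\mapsto F_i(u)$, then precompose with the bounded linear map $\partial_\xi$. The paper instead proves the needed smoothness (Lemma~\ref{phismooth}) in a self-contained way: it defines the targets implicitly via polynomial relations, e.g.\ $u^2+2u-z^2=0$ for $u=\sqrt{z^2+1}-1$, and applies the (${\rm C}^\infty$-)Implicit Function Theorem. The only analytic inputs are the Banach-algebra property of $H^{s-1}(\mathbb{R})$ (which makes the polynomial map smooth) and Lemma~\ref{mult}/Corollary~\ref{isom} (that multiplication by $\psi\circ z$ for locally Lipschitz $\psi$ is bounded, resp.\ an isomorphism, on $H^{s-1}(\mathbb{R})$). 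So the paper trades a deep external theorem for an elementary, fully reproducible argument, at the cost of cooking up ad hoc polynomial equations for each $\phi_i$; your route is shorter and more systematic but relies on a result whose proof is nontrivial and which you should cite precisely (including the hypotheses $F_i\in {\rm C}^\infty$, $F_i(0)=0$, and $s-1>1/2$). A nice byproduct of your write-up is the observed simplification $F_1(x)=1-(1+x^2)^{-1/2}$, which makes the identity $a_1(f_0)=f_0'(1+f_0'^2)^{-3/2}$ and the agreement with the paper's (unsimplified) formula transparent; the paper merely says the derivative formulas ``follow from straightforward calculations.''
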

\begin{proof} For the smoothness result we refer to Lemma \ref{phismooth} in Appendix \ref{appsmooth}.
The representations for the derivatives  $\p\phi_i(f_0)$ follow from straightforward calculations. 
\end{proof}

\begin{lemma}\label{L:REG}
Given $s\in(3/2,2)$, we have 
\begin{align*}
\Psi\in {\rm C}^\infty(H^{s}(\mathbb{R}), H^{s-1}(\mathbb{R})).
\end{align*}
\end{lemma}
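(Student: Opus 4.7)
The strategy is to decompose $\Psi$ into a finite combination of elementary building blocks, each of which is either a bounded linear operator, a smooth Nemytskii-type map, or a nonlinear singular integral operator of the form $f\mapsto B^0_{n,2}(f)[h(f)]$, and then show that each block is $C^\infty$ from $H^s(\mathbb{R})$ to $H^{s-1}(\mathbb{R})$.

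First I would note that the map $f\mapsto f'$ is bounded and linear from $H^s(\mathbb{R})$ to $H^{s-1}(\mathbb{R})$, hence $C^\infty$. By Lemma~\ref{L:MP1}, $\phi_i\in C^\infty(H^s(\mathbb{R}),H^{s-1}(\mathbb{R}))$, $i=1,2$. Since $s-1\in(1/2,1)$, the space $H^{s-1}(\mathbb{R})$ is a Banach algebra, so the pointwise products $f'\phi_i(f)$ and the other combinations appearing inside the brackets of $\Psi_1$, $\Psi_2$ are $C^\infty$ maps $H^s(\mathbb{R})\to H^{s-1}(\mathbb{R})$. Finally, the outer multiplication by $f'$ in the first term of $\Psi$ is again multiplication in the algebra $H^{s-1}(\mathbb{R})$, hence smooth. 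Hence the entire proof reduces to showing that, for each $n\in\{0,1,2,3\}$, the map
\[
\bigl[f\mapsto B^0_{n,2}(f)[\,\cdot\,]\bigr]:\; H^s(\mathbb{R})\longrightarrow \kL(H^{s-1}(\mathbb{R}))
\]
is of class $C^\infty$, for then, combined with the $C^\infty$ dependence of the argument $h=h(f)$ on $f$ and the chain rule, every summand of $\Psi$ is seen to be smooth.

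The central observation is that Fr\'echet differentiation in the $a$-arguments of $B_{n,m}$ keeps us inside the same class of operators. Concretely, since
\[
\partial_{a_k}\frac{1}{1+(\delta_{[\xi,\eta]}a_k/\eta)^2}[\widetilde a]=-\frac{2(\delta_{[\xi,\eta]}a_k/\eta)(\delta_{[\xi,\eta]}\widetilde a/\eta)}{\bigl[1+(\delta_{[\xi,\eta]}a_k/\eta)^2\bigr]^2},
\]
the partial derivative $\partial_{a_k}B_{n,m}(a_1,\ldots,a_m)[b_1,\ldots,b_n,h][\widetilde a]$ can be written as a finite linear combination of operators of the same shape $B_{n+2,m+1}$ with one additional copy of $a_k$ and the new argument $\widetilde a$ appearing among the $b$-slots, and one additional copy of $a_k$ appearing among the $a$-slots. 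Because $B^0_{n,m}(f)[h]=B_{n,m}(f,\ldots,f)[f,\ldots,f,h]$ has $f$ in every slot, differentiating $f\mapsto B^0_{n,m}(f)[h]$ via the product/chain rule produces a finite sum of terms of the same form $B^0_{n',m'}(f)[\,\cdot\,]$ (each with possibly different $n',m'$). Iterating this computation shows inductively that all Fr\'echet derivatives of $B^0_{n,m}$ are themselves finite sums of operators covered by Lemma~\ref{L:MP0}(ii), and are therefore bounded $H^{s-1}\to H^{s-1}$.

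Boundedness of all derivatives together with the local Lipschitz property $C^{1-}$ supplied by Lemma~\ref{L:MP0}(ii) (applied at each induction step to the newly produced operators) upgrades $B^0_{n,m}$ from $C^{1-}$ to $C^\infty$ in the $a$-variable. Combining all the pieces via the chain and product rules then yields $\Psi\in C^\infty(H^s(\mathbb{R}),H^{s-1}(\mathbb{R}))$, as claimed. The main technical obstacle is the bookkeeping in the inductive step: one must verify that each differentiation indeed reduces to a finite sum of $B_{n',m'}$-type operators with higher indices and arguments still lying in $H^s(\mathbb{R})$, so that Lemma~\ref{L:MP0}(ii) applies uniformly on bounded sets; once this combinatorial structure is identified, the smoothness follows without further analytic input.
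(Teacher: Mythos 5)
Your plan matches the paper's proof, which combines Lemma~\ref{L:MP1} for the smoothness of the $\phi_i$ with Corollary~\ref{Bsmooth} (proved via Lemma~\ref{Bdiff} in Appendix~\ref{appsmooth}) for the smoothness of $f\mapsto B^0_{n,m}(f)$, plus the chain rule and the Banach algebra structure of $H^{s-1}(\mathbb{R})$. The central observation you rediscover — that differentiating $B^0_{n,m}$ in $f$ stays within the $B_{n',m'}$-family, with the increment occupying additional $b$-slots — is precisely the content of Lemma~\ref{Bdiff}, where the Fr\'echet derivative is computed as $\partial B^k_{n,m}(f)[g]=nB^{k+1}_{n-1,m}(f)[\,\cdot\,,g]-2mB^{k+1}_{n+1,m+1}(f)[\,\cdot\,,g]$.

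One place where your sketch is looser than the paper: you assert that ``boundedness of all derivatives together with the $C^{1-}$ property from Lemma~\ref{L:MP0}(ii) upgrades $B^0_{n,m}$ from $C^{1-}$ to $C^\infty$'' and that this ``follows without further analytic input''. Strictly speaking, having a formal candidate for the derivative and knowing the candidate is a bounded operator does not by itself prove it is the Fr\'echet derivative; one still needs a quantitative remainder estimate. The paper supplies this in the proof of Lemma~\ref{Bdiff}: the first-order remainder $R(f,g)$ is expanded by explicit algebraic manipulations as a finite sum of $B_{n',m'}$-operators each carrying two copies of the increment $g$ among their $b$-arguments, so the bound in Lemma~\ref{L:MP0}(ii) gives $\|R(f,g)\|_{\kL(H^{s-1})}\leq C\|g\|_{H^s}^2$. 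That quadratic estimate (not merely local Lipschitz continuity) is what makes the induction go through. It is elementary, as you anticipate, but it does need to be done; so the substance of your plan is correct, with this one step deserving to be made explicit.
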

\begin{proof} 
The claim follows from Lemma \ref{L:MP1} and Corollary \ref{Bsmooth}.
\end{proof}

 For two Banach spaces $X_0,X_1$ with dense embedding $X_1\hookrightarrow X_0$, let $\kH(X_1,X_0)$ denote the set of operators $A\in\kL(X_1,X_0)$ 
 such that $-A$ generates a strongly continuous and analytic semigroup of operators on $X_0$.

In order to establish our main result in Theorem \ref{MT1} we next prove a generator property for
the Fr\'echet derivative $\p\Psi(f_0)\in\kL(H^s(\mathbb{R}), H^{s-1}(\mathbb{R}))$  
 which  identifies \eqref{NNEP} as a  nonlinear evolution  problem of parabolic type.

\begin{prop}\label{T:GAP}
Given $f_0\in H^s(\mathbb{R})$, we have
\begin{align}\label{eq:GP}
-\p\Psi(f_0)\in\mathcal{H}(H^s(\mathbb{R}), H^{s-1}(\mathbb{R})).
\end{align}
\end{prop}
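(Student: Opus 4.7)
The plan is to identify $-\partial\Psi(f_0)$, modulo a lower-order remainder, with an operator obtained by a freezing-the-coefficients procedure from translation-invariant Fourier multipliers whose symbols have strictly positive real part, uniformly in the freezing parameter. The stability of $\kH(H^s,H^{s-1})$ under perturbations of arbitrarily small relative bound (cf.~\cite{L95}) would then yield \eqref{eq:GP}.

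\textbf{Step 1 (Linearization and extraction of the principal part).} Starting from \eqref{PHI}, I differentiate by the product rule, using Lemma~\ref{L:MP1} for the derivatives of $\phi_i(f_0)$ and Lemma~\ref{L:MP0}(ii) for the joint smoothness of $B^0_{n,2}$ in each of its arguments. Three types of contributions appear: $(a)$ differentiation of the outer factor $f'$ in front of $\Psi_1$, producing a zero-order multiplication plus an $h'$-multiplication; $(b)$ differentiation of the inner arguments $\phi_i(f_0)$, which by Lemma~\ref{L:MP1} give terms of the form $B^0_{n,2}(f_0)[\ldots,a_i(f_0)\,h']$; and $(c)$ differentiation of the coefficients $a_i=f_0$ of each $B^0_{n,2}(f_0)$. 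Applying Lemma~\ref{L:MP0}(iii) to each term of type $(b)$ lets me pull all arguments other than $h'$ out as pointwise multiplications, modulo a remainder that lies in $\kL(H^{s'}(\mathbb{R}),H^{s-1}(\mathbb{R}))$ for some $s'\in(3/2,s)$; the same lemma handles type~$(c)$. The result is a decomposition
\begin{align*}
 \partial\Psi(f_0)[h]=\Phi(f_0)[h']+T(f_0)[h],\qquad T(f_0)\in\kL(H^{s'}(\mathbb{R}),H^{s-1}(\mathbb{R})),
\end{align*}
where $\Phi(f_0)$ is a finite sum of operators of the form $c(f_0)\,B^0_{n,2}(f_0)[\,\cdot\,]$ with scalar factors $c(f_0)$ depending smoothly on $f_0$, $f_0'$, and $\omega(f_0)$. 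By the standard interpolation inequality $\|u\|_{H^{s'}}\leq\eps\|u\|_{H^s}+C_\eps\|u\|_{H^{s-1}}$, the perturbation $T(f_0)$ has arbitrarily small relative bound with respect to $\Phi(f_0)\tfrac{d}{d\xi}\colon H^s\to H^{s-1}$, so the claim reduces to $-\Phi(f_0)\tfrac{d}{d\xi}\in\kH(H^s,H^{s-1})$.

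\textbf{Step 2 (Localization and freezing).} Given $\eps>0$ small, I cover $\mathbb{R}$ by intervals of length $\eps$, pick a subordinate ${\rm C}^\infty$-partition of unity $\{\pi_j\}$, and cutoffs $\chi_j\equiv 1$ on $\supp\pi_j$ supported in a slightly larger interval. On the support of $\chi_j$ I freeze $f_0$ to its affine Taylor polynomial $\ell_j(\xi):=f_0(\xi_j)+\beta_j(\xi-\xi_j)$, where $\beta_j:=f_0'(\xi_j)$. Using the Lipschitz estimates of Lemma~\ref{L:MP0}(i)-(ii) together with $\|\chi_j(f_0-\ell_j)\|_{H^s}=o_\eps(1)$, I expect to establish
\begin{align*}
 \pi_j\,\Phi(f_0)\tfrac{d}{d\xi}-\Phi(\ell_j)\tfrac{d}{d\xi}\,\pi_j\in\kL(H^s,H^{s-1}),\qquad\text{with norm $o_\eps(1)$}.
\end{align*}
Since $\ell_j'\equiv\beta_j$ is constant, each $B^0_{n,2}(\ell_j)$ is translation invariant and reduces, via the Hilbert-transform identity
\begin{align*}
\PV\int_\mathbb{R}\frac{h(\xi-\eta)}{\eta}\,d\eta=-\pi(Hh)(\xi),
\end{align*}
to a constant multiple of $H$ applied to its last argument. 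Consequently $\Phi(\ell_j)\tfrac{d}{d\xi}$ becomes a Fourier multiplier and, because $H\tfrac{d}{d\xi}=|D|$, its symbol is a real-valued multiple of $|\xi|$.

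\textbf{Step 3 (Symbol sign, patching, conclusion).} The main obstacle is the combinatorial identification of that symbol. Collecting the eight $B^0_{n,2}$-contributions from $\Psi_1,\Psi_2$, evaluating each at $f=\ell_j$, and summing, I expect the Fourier symbol of $-\Phi(\ell_j)\tfrac{d}{d\xi}$ to take the form $\alpha(\beta_j)\,|\xi|$ with a manifestly positive $\alpha(\beta_j)$ (of the order $\sigma/(4\mu(1+\beta_j^2)^{3/2})$ or an equivalent expression), uniformly bounded from $0$ on bounded sets of $\beta_j=f_0'(\xi_j)$. Granted this, $-\Phi(\ell_j)\tfrac{d}{d\xi}$ is, up to scaling, the negative of $|D|$, and so it belongs to $\kH(H^s,H^{s-1})$ with constants uniform in $j$. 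A standard patching argument using the family $\{\pi_j,\chi_j\}$ (in the spirit of~\cite{MBV18,AbMa20x}) transfers this to $-\Phi(f_0)\tfrac{d}{d\xi}$; reincorporating the lower-order remainder $T(f_0)$ via the small-relative-bound perturbation theorem for analytic generators closes the proof. The technical bulk lies in (i) the cancellation-intensive symbol computation that secures $\alpha(\beta_j)>0$, and (ii) the commutator estimates with the partition of unity, both of which rely decisively on the three items of Lemma~\ref{L:MP0}.
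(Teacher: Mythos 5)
Your overall strategy — linearize, freeze coefficients, identify local Fourier multipliers, patch — is the same one the paper follows, and Steps 1 and 2 correctly capture the structure of $\partial\Psi(f_0)$ and the role of Lemma~\ref{L:MP0} in commuting multiplications in and out of the singular integrals. The final Step 3, however, has a genuine gap: from the localization estimates you obtain an a~priori resolvent bound
\begin{equation*}
\kappa\big\|\big(\lambda-\Phi(f_0)\tfrac{d}{d\xi}\big)h\big\|_{H^{s-1}}\;\geq\;|\lambda|\,\|h\|_{H^{s-1}}+\|h\|_{H^s},
\end{equation*}
for $\re\lambda$ large, but this alone gives only injectivity and closed range of $\lambda-\Phi(f_0)\tfrac{d}{d\xi}$, \emph{not} surjectivity. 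There is no ``standard patching argument'' that upgrades a frozen-coefficient resolvent estimate to invertibility of the variable-coefficient operator without an extra ingredient. The paper supplies this via the homotopy $\Phi(\tau)$ ($\tau\in[0,1]$) connecting $\partial\Psi(f_0)=\Phi(1)$ to the explicit Fourier multiplier $\Phi(0)=-\tfrac{\sigma}{4\mu}(-d^2/d\xi^2)^{1/2}$; the a~priori bound is proved uniformly in $\tau$, $\lambda-\Phi(0)$ is shown invertible directly, and the method of continuity (cf.~\cite[Prop.~I.1.1.1]{Am95}) then transfers invertibility to $\lambda-\Phi(1)$. Without a comparable argument your proof does not close.

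Two further points are imprecise, though less serious. First, the commutator estimate you expect, namely $\|\pi_j\,\Phi(f_0)\tfrac{d}{d\xi}-\Phi(\ell_j)\tfrac{d}{d\xi}\,\pi_j\|_{\kL(H^s,H^{s-1})}=o_\eps(1)$, is too optimistic; what the singular-integral commutator estimates actually yield (see Lemmas~\ref{L:B1}--\ref{L:B5} and Proposition~\ref{T:AP}) is a bound of the form $\gamma\|\pi_j^\eps h\|_{H^s}+K\|h\|_{H^{s'}}$ with $\gamma$ small but $K=K(\eps)$ possibly large, and the lower-order term is then absorbed by interpolation after summing over $j$; a purely $o_\eps(1)$ operator-norm bound would not be obtainable. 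Second, the frozen local model is \emph{not} just $-\alpha(\beta_j)\,|D|$: differentiating the outer factor $f'$ in $-\tfrac{\sigma}{4\pi\mu}f'\Psi_1(f)$ produces a genuine first-order drift $\beta_\tau(\xi_j^\eps)\tfrac{d}{d\xi}$ (with $\beta_\tau=-\tfrac{\sigma\tau}{4\pi\mu}\Psi_1(f_0)$), so the correct local model is $\bA_{j,\tau}=-\alpha_\tau(\xi_j^\eps)|D|+\beta_\tau(\xi_j^\eps)\tfrac{d}{d\xi}$. This drift does not spoil the generator property — the resolvent estimate only needs $\alpha_\tau\geq\eta>0$ — but it must appear in the model operator. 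Finally, note that covering $\R$ by infinitely many intervals of length $\eps$ leaves the uniformity in $j$ unaddressed; the paper's finite localization family with a designated piece $\pi_N^\eps$ at infinity is the device that makes the patching sum finite.
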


The subsequent analysis is devoted to the proof of  Proposition~\ref{T:GAP}.
To start, we fix a function~${f_0\in H^s(\mathbb{R})}$  and  $s'\in(3/2,s)$, and we note that
\[
\p\Psi(f_0)[f]=-\frac{\sigma}{4\pi\mu}f'\Psi_1(f_0)-\frac{\sigma}{4\pi\mu} f_0'\p\Psi_1(f_0)[f]+\frac{\sigma}{4\pi\mu}\p\Psi_2(f_0)[f], \qquad f\in H^s(\mathbb{R}).
\]

To calculate the derivatives of $\Psi_i$ we use Lemma \ref{Bdiff} to get
\[\partial B_{n,2}^0(f_0)[f][h]=n B_{n,2}(f_0,f_0)[f,f_0,\ldots f_0,h]-4B_{n+2,3}(f_0,f_0,f_0)[f,f_0,\ldots,f_0,h]\]
and Lemma \ref{L:MP0}~(iii) to rewrite this for $n\geq0$ as
\begin{align*}
\partial B_{n,2}^0(f_0)[f][h]&=h\big(
nB^0_{n-1,2}(f_0)[f']-4B^0_{n+1,3}(f_0)[f']\big)+R_n[f]\\
&=h\big(n B^0_{n-1,3}(f_0)[f']+(n-4)B^0_{n+1,3}(f_0)[f']\big)+R_n[f],
\end{align*}
where  $n B^0_{n-1,3}(f_0):=0$ for $n=0$ and
\[\|R_nf\|_{H^{s-1}}\leq C\|h\|_{H^{s-1}}\|f\|_{H^{s'}}. \]
 The constant $C$ is independent of $f\in H^{s}(\R)$ and $h\in H^{s-1}(\R).$
From this and the definition of $\Psi_i,$ $i=1,\, 2$, we get
\begin{equation}\label{pPsi1}
\begin{aligned}
\p\Psi_i(f_0)[f]&=T_{i,1}(f_0)[f]+T_{i,2}(f_0)[f]+T_{i,{\rm lot}}(f_0)[f],\qquad i=1,\,2,
\end{aligned}
\end{equation}
where
\begin{equation}\label{pPsi2-new}
\begin{aligned}
T_{1,1}(f_0)[f]:=&(B^0_{0,2}-B^0_{2,2})[(a_1+\phi_2+f_0'a_2)f']+B^0_{1,2}[(3(\phi_1+f_0'a_1)-a_2)f']\\[1ex]
&+B_{3,2}^0[(\phi_1+f_0'a_1+a_2)f'],\\[1ex]
T_{1,2}(f_0)[f]:=&\phi_1(3f_0'B_{0,3}^0-6B_{1,3}^0-6f_0'B_{2,3}^0+2B_{3,3}^0-f_0'B_{4,3}^0)[f']\\[1ex]
&+\phi_2(-B_{0,3}^0-6f_0'B_{1,3}^0+6B_{2,3}^0+2f_0'B_{3,3}^0-B_{4,3}^0)[f'],\\[1ex]
T_{2,1}(f_0)[f]:=&-B_{0,2}^0[(\phi_1+f_0'a_1+a_2)f']+(B^0_{1,2}-B^0_{3,2})[(a_1+\phi_2+f_0'a_2)f']\\[1ex]
&+B^0_{2,2}[(\phi_1+f'_0a_1-3a_2)f'],\\[1ex]
T_{2,2}(f_0)[f]:=&\phi_1(B_{0,3}^0+6f_0'B_{1,3}^0-6B_{2,3}^0-2f_0'B_{3,3}^0+B_{4,3}^0)[f']\\[1ex]
&+\phi_2(f_0'B_{0,3}^0-2B_{1,3}^0-6f_0'B_{2,3}^0+6B_{3,3}^0+f_0'B_{4,3}^0)[f'],
\end{aligned}
\end{equation}
with shortened notation $a_i=a_i(f_0)$, $\phi_i=\phi_i(f_0)$,
$B_{n,m}^0=B_{n,m}^0(f_0)$, and
\begin{align}\label{pPsi3}
\|T_{i,{\rm lot}}(f_0)[f]\|_{H^{s-1}}\leq C\max\{\|\phi_1\|_{H^{s-1}},\|\phi_2\|_{H^{s-1}}\}\|f\|_{H^{s'}}\leq C\|f\|_{H^{s'}},\quad f\in H^s(\R).
\end{align}
Having computed the derivative  $\p\Psi(f_0)$, it remains to  establish \eqref{eq:GP}, which is achieved via  a localization procedure.
To proceed, we fix for each $\e\in(0,1)$ a so-called finite~$\e$-localization family, that is a set 
\[\{(\pi_j^\e,\xi_j^\e)\,:\, -N+1\leq j\leq N\}\subset{\rm C}^\infty(\R,[0,1])\times\R\]
 such that
\begin{align*}
\bullet\,\,\,\, \,\,&\pi_j^\eps\in C^\infty(\mathbb{R},[0,1]);\\[1ex]
\bullet\,\,\,\, \,\,  & \text{$ \supp \pi_j^\e $ is an interval of length $\e$ for all $|j|\leq N-1$, $ \supp \pi_{N}^\e\subset(-\infty,-1/\e]\cup [1/\e,\infty)$;} \\[1ex]
\bullet\,\,\,\, \,\, &\text{ $ \pi_j^\e\cdot  \pi_l^\e=0$ if $[|j-l|\geq2, \max\{|j|, |l|\}\leq N-1]$ or $[|l|\leq N-2, j=N];$} \\[1ex]
\bullet\,\,\,\, \,\, &\text{ $\sum_{j=-N+1}^N(\pi_j^\e)^2=1;$} \\[1ex]
 \bullet\,\,\,\, \,\, &\text{$\|(\pi_j^\e)^{(k)}\|_\infty\leq C\e^{-k}$ for all $ k\in\N, -N+1\leq j\leq N$;} \\[1ex]
 \bullet\,\,\,\, \,\, &\xi^\eps_j\in\supp\pi_j^\eps,\; |j|\leq N-1. 
 \end{align*} 
 The real number $\xi_N^\e$ plays no role in the analysis below.
To each  finite $\e$-localization family we associate  a second family   
$$\{\chi_j^\e\,:\, -N+1\leq j\leq N\}\subset {\rm C}^\infty(\R,[0,1])$$ with the following properties
\begin{align*}
\bullet\,\,\,\, \,\,  &\text{$\chi_j^\e=1$ on $\supp \pi_j^\e$;} \\[1ex]
\bullet\,\,\,\, \,\,  &\text{$\supp \chi_j^\e$ is an interval  of length $3\e$ and with the same midpoint as $ \supp \pi_j^\e$, $|j|\leq N-1$;} \\[1ex]
\bullet\,\,\,\, \,\,  &\text{$\supp\chi_N^\e\subset [|x|\geq 1/\e-\e]$.}  
\end{align*} 
 Using the $\e$-localization family we define norms on $H^s(\mathbb{R}),$ $s\geq 0$ that are  equivalent to the standard  norm.
Indeed,  it is not difficult to prove that, given $\e\in(0,1)$ and $s\geq0$, there exists a constant $c=c(\e,s)\in(0,1)$ such that
\begin{align}\label{EQNO}
c\|f\|_{H^s}\leq \sum_{j=-N+1}^N\|\pi_j^\e f\|_{H^s}\leq c^{-1}\|f\|_{H^s},\qquad f\in H^{s}(\mathbb{R}).
\end{align}

  To show \eqref{eq:GP} we use a homotopy argument.
   For this we consider the continuous path 
\[\Phi:[0,1]\longrightarrow\kL(H^{s}(\mathbb{R}), H^{s-1}(\mathbb{R}))\]
defined by
\begin{align*}
\Phi(\tau):=-\frac{\tau\sigma}{4\pi\mu}\Psi_1(f_0)\frac{d}{d\xi}-\frac{\sigma\tau}{4\pi\mu} f_0'\p\Psi_1(\tau f_0)+\frac{\sigma}{4\pi\mu}\p\Psi_2(\tau f_0),\qquad \tau\in[0,1].
\end{align*}
We next locally approximate the operator  $\Phi(\tau)$,  $\tau\in[0,1]$,   by  certain Fourier multipliers~$\bA_{j,\tau}$. 
It is worth emphasizing that $\Phi(1)=\p\Psi(f_0)$, while $\Phi(0)$ is the Fourier multiplier given by 
\[
\Phi(0)=-\frac{\sigma}{4 \mu} H\circ \frac{d}{d\xi}=-\frac{\sigma}{4 \mu}\Big(- \frac{d^2}{d\xi^2}\Big)^{1/2},
\]
with $H$ denoting the Hilbert transform.
The  homotopy $\Phi$  will be used to conclude invertibility of $\lambda-\Phi(1)$ from $\lambda-\Phi(0)$  for sufficiently large $\lambda$.  
We also point out the estimate
\begin{align}\label{MES}
\|gh\|_{H^{s-1}}\leq 2(\|g\|_\infty\|h\|_{H^{s-1}}+\|h\|_\infty\|g\|_{H^{s-1}})\qquad\text{for $g,\, h\in H^{s-1}(\mathbb{R})$, $s\in(3/2,2)$,}
\end{align} 
which is used several times in the arguments that follow.

 From a technical point of view, the following proposition is at the core of the proof of Proposition \ref{T:GAP}. 
 It provides estimates for the errors introduced by replacing the operator~$\Phi(\tau)$ by the localizations ~$\bA_{j,\tau}$.
 In fact, this amounts to the well-known ``freezing of coefficients'' in the context of our nonlocal operators.

\begin{prop}\label{T:AP} 
Let $\gamma>0$ be given and fix $s'\in (3/2,s)$. 
Then, there exist $\e\in(0,1)$,  a constant $K=K(\e)$, 
and   bounded operators 
$$
\bA_{j,\tau}\in\kL(H^s(\mathbb{R}), H^{s-1}(\mathbb{R})), \qquad\text{$j\in\{-N+1,\ldots,N\}$, $\tau\in[0,1]$,} 
$$
 such that 
 \begin{equation}\label{D1}
  \|\pi_j^\e \Phi(\tau) [f]-\bA_{j,\tau}[\pi^\e_j f]\|_{H^{s-1}}\leq \gamma \|\pi_j^\e f\|_{H^s}+K\|  f\|_{H^{s'}}
 \end{equation}
 for all $ j\in\{-N+1,\ldots,N\}$, $\tau\in[0,1],$  and  $f\in H^s(\mathbb{R})$. 
 The operators $\bA_{j,\tau}$ are defined  by 
  \begin{align*} 
 \bA_{j,\tau }:=- \alpha_\tau(\xi_j^\e) \Big(-\frac{d^2}{d\xi^2}\Big)^{1/2}+\beta_\tau ( \xi_j^\e)\frac{d}{d\xi}, \quad |j|\leq N-1, \qquad\bA_{N,\tau }:= -  \frac{\sigma}{4\mu} \Big(-\frac{d^2}{d\xi^2}\Big)^{1/2},
 \end{align*}
  with functions $\alpha_\tau,\, \beta_\tau$ given by
 \begin{align*}
 \alpha_\tau:=\frac{\sigma}{4\mu}[a_2(\tau f_0)+\tau f_0'a_1(\tau f_0)], \qquad  \beta_\tau:= -\frac{\sigma\tau }{4\pi\mu}\Psi_1(f_0).   
 \end{align*}
\end{prop}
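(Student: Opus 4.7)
My plan is to carry out a ``freezing of coefficients'' procedure based on the localization family $\{\pi_j^\e,\chi_j^\e\}$, splitting the analysis into the interior case $|j|\le N-1$ and the far-field case $j=N$. Throughout I distinguish two kinds of errors: those that become small with $\e$ (absorbed into $\gamma\|\pi_j^\e f\|_{H^s}$) and genuinely lower-order errors (absorbed into $K\|f\|_{H^{s'}}$).

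\emph{Interior case.} Starting from \eqref{pPsi1}--\eqref{pPsi3} for $\p\Psi_i(\tau f_0)$, the remainder $T_{i,{\rm lot}}$ is absorbed at once by \eqref{pPsi3}. The leading zeroth-order multiplier term $-(\sigma\tau/4\pi\mu)\Psi_1(f_0)\,f'$ of $\Phi(\tau)$ is handled by freezing $\Psi_1(f_0)$ at the point $\xi_j^\e$: since $\Psi_1(f_0)\in H^{s-1}\hookrightarrow C^{s-3/2}$, the difference $\Psi_1(f_0)-\Psi_1(f_0)(\xi_j^\e)$ is of order $\e^{s-3/2}$ in $L_\infty$ on $\supp\chi_j^\e$, and together with the product estimate \eqref{MES} this directly produces the $\beta_\tau(\xi_j^\e)(d/d\xi)$ part of $\bA_{j,\tau}$ with admissible errors. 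The remaining work concerns the singular-integral summands of $T_{i,1}+T_{i,2}$, each of the form $c\,B_{n,m}^0(\tau f_0)[c'f']$ with $c,c'$ polynomial in $\{a_i(\tau f_0),\phi_i(\tau f_0),\tau f_0'\}$. Here I proceed in three freezing steps: (a) insert the factor $\chi_j^\e$ in front of $c'f'$ inside the integral and control the complementary piece $\pi_j^\e B_{n,m}^0(\tau f_0)[(1-\chi_j^\e)c'f']$ by $K\|f\|_{H^{s'}}$, since $\operatorname{dist}(\supp\pi_j^\e,\supp(1-\chi_j^\e))\ge\e$ makes the remaining kernel non-singular; (b) freeze the multipliers $c(\xi)\to c(\xi_j^\e)$ outside and $c'(\xi)\to c'(\xi_j^\e)$ inside, paying the Hölder seminorm cost $C\e^{s-3/2}$ on $\supp\chi_j^\e$ and using \eqref{MES}; (c) freeze the dependence of $B_{n,m}^0$ on $\tau f_0$ by replacing each $\delta_{[\xi,\eta]}(\tau f_0)/\eta$ by $\tau f_0'(\xi_j^\e)$, an operation whose $H^{s-1}$ error is controlled via Lemma~\ref{L:MP0}(iii) by $C\|f\|_{H^{s'}}$. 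Finally writing $\chi_j^\e f'=(\pi_j^\e f)'+(\text{lower order})$ brings the action to one on $\pi_j^\e f$ up to further admissible errors.

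\emph{Identification of the symbol and the far-field case.} After these steps, each $B_{n,m}^0(\tau f_0)$ has been replaced by the constant-coefficient Fourier multiplier $p^n(1+p^2)^{-m}\pi H$ with $p=\tau f_0'(\xi_j^\e)$, so that the combined contribution of $-(\sigma\tau/4\pi\mu)f_0'\p\Psi_1(\tau f_0)+(\sigma/4\pi\mu)\p\Psi_2(\tau f_0)$ acts on $\pi_j^\e f$ as $\mu_j^\ast\,H\circ(d/d\xi)=\mu_j^\ast|D|$ for some rational expression $\mu_j^\ast$ in $(p,a_i(\tau f_0)(\xi_j^\e),\phi_i(\tau f_0)(\xi_j^\e))$. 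The main obstacle of the proof is then the algebraic identity $\mu_j^\ast=\alpha_\tau(\xi_j^\e)$, a finite but delicate verification using the explicit formulas $a_2=(1+p^2)^{-3/2}$, $a_1=p(2+p^2+2\omega)/(\omega(\omega+1+p^2)^2)$, $\phi_1=1-\omega^{-1}$, $\phi_2=p/\omega$ with $\omega=\sqrt{1+p^2}$; this is where the specific structure of $\Psi$ (governed by the surface-tension-driven Stokes flow) manifests itself. The far-field case $j=N$ runs along the same lines, with one difference: on $\supp\pi_N^\e$ the function $f_0$ and its derivative are small in the norms relevant for the freezing estimates by the tail decay of $H^s$-functions, so that $a_1,\phi_i,\Psi_1(f_0)$ all effectively vanish while $a_2\to1$. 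Consequently only the contribution $-B_{0,2}^0[a_2f']$ from $T_{2,1}$ survives at leading order, yielding $\bA_{N,\tau}=-(\sigma/4\mu)\,H\circ(d/d\xi)=-(\sigma/4\mu)|D|$ applied to $\pi_N^\e f$.
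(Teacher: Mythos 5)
Your strategy---splitting into interior and far-field regions, absorbing $T_{i,\rm lot}$ via \eqref{pPsi3}, freezing $\Psi_1(f_0)$ to obtain the $\beta_\tau(\xi_j^\e)\,d/d\xi$ part, freezing the multiplier coefficients and the singular-kernel dependence on $\tau f_0$ to obtain a constant-coefficient $B_{0,0}=\pi H$, and then verifying that the resulting scalar combines to give $\alpha_\tau$---is exactly the plan the paper follows; the paper packages the freezing estimates as Lemmas~\ref{L:B1}--\ref{L:B5} (most cited from earlier work) rather than deriving them ad hoc, and encodes the algebraic verification into the definitions of $a_{1,\tau},\dots,a_{4,\tau}$.

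One point in your step~(c) is not quite right as stated. You claim the error in freezing the kernel dependence $\delta_{[\xi,\eta]}(\tau f_0)/\eta\mapsto \tau f_0'(\xi_j^\e)$ is controlled via Lemma~\ref{L:MP0}(iii) ``by $C\|f\|_{H^{s'}}$.'' But Lemma~\ref{L:MP0}(iii) bounds the discrepancy by $C\|b_1\|_{H^{s'}}\|h\|_{H^{s-1}}\prod_{i\ge2}\|b_i\|_{H^s}$; here the fixed function $f_0$ sits in the $b_1$-slot and the variable $f$ enters through $h\sim (\pi_j^\e f)'$ or $\chi_j^\e f'$, so this estimate only yields $C\|f\|_{H^s}$ --- a top-order term, not the lower-order $K\|f\|_{H^{s'}}$ you need. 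To get the required structure one needs the genuinely two-part estimates of the form $\nu\|\pi_j^\e h\|_{H^{s-1}}+K\|h\|_{H^{s'-1}}$ proved in Lemmas~\ref{L:B1}--\ref{L:B5} (and the commutator Lemma~\ref{L:B0} to move $\pi_j^\e$ through the singular integral), where the small-with-$\e$ piece $\nu\|\pi_j^\e h\|_{H^{s-1}}$ can be absorbed into $\gamma\|\pi_j^\e f\|_{H^s}$ and the truly lower-order piece contributes to $K\|f\|_{H^{s'}}$. So the idea is right, but the cited tool is insufficient and would leave a gap in the estimate.

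A second, minor remark: your final identity should read $\mu_j^\ast=-\alpha_\tau(\xi_j^\e)$ rather than $\mu_j^\ast=\alpha_\tau(\xi_j^\e)$, since $\bA_{j,\tau}$ carries the minus sign in front of $\alpha_\tau(\xi_j^\e)(-d^2/d\xi^2)^{1/2}$. This is a bookkeeping slip rather than a conceptual one, and the paper avoids it by carrying the sign inside $a_{3,\tau},a_{4,\tau}$.
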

\begin{proof} Let $\eps\in(0,1)$.
In the following we denote by  $K$ constants that may  depend  on $\e $.

The estimate \eqref{pPsi3} implies
\begin{align}\label{EST:0}
\|\pi_j^\e f_0'T_{1,\rm lot}(\tau f_0)[f]\|_{H^{s-1}}+\|\pi_j^\e T_{2,\rm lot}(\tau f_0)[f]\|_{H^{s-1}}\leq K\|f\|_{H^{s'}},\quad -N+1\leq j\leq N.
\end{align}
Next we consider the operators $\Psi_1(f_0)(d/d\xi)$, $f_0'T_{1,j}(\tau f_0)$,   and $T_{2,j}(\tau f_0)$, ${j=1,\, 2}$, which we approximate successively.
\medskip

\noindent{\em Step 1.} We first consider the term $f'\Psi_1(f_0)$. 
 Using $\Psi_1(f_0)\in H^{s-1}(\mathbb{R}) \hookrightarrow {\rm C}^{s-3/2}(\mathbb{R})$ and  the property $\chi_j^\e\pi_j^\e=\pi_j^\e$ together with \eqref{MES}, we get
\begin{equation}\label{EST:1a}
\begin{aligned}
&\hspace{-0.5cm}\|\pi_j^\e f'\Psi_1(f_0)-\Psi_1(f_0)(\xi_j^\e)(\pi_j^\e f)'\|_{H^{s-1}}\\[1ex]
&\leq  2\|\chi_j^\e (\Psi_1(f_0)-\Psi_1(f_0)(\xi_j^\e))\|_\infty\|(\pi_j^\e f)'\|_{H^{s-1}}+K\|f\|_{H^{s'}}\\[1ex]
&\leq \frac{\gamma}{3} \|\pi_j^\e f\|_{H^{s}}+K\|f\|_{H^{s'}}
\end{aligned}
\end{equation}
 for $|j|\leq N-1$,  provided that $\e$ is sufficiently small.
Furthermore, since $\Psi_1(f_0)$ vanishes at infinity, we have
\begin{equation}\label{EST:1b}
\begin{aligned}
\|\pi_N^\e f'\Psi_1(f_0)\|_{H^{s-1}}&\leq 2\|\chi_N^\e \Psi_1(f_0)\|_\infty\|(\pi_N^\e f)'\|_{H^{s-1}}+K\|f\|_{H^{s'}}\\[1ex]
&\leq\frac{\gamma}{3} \|\pi_N^\e f\|_{H^{s}}+K\|f\|_{H^{s'}}, 
\end{aligned}
\end{equation}
 provided that $\e$ is sufficiently small.\medskip

\noindent{\em Step 2.} We now consider the operators $f_0'T_{1,2}(\tau f_0)$   and $T_{2,2}(\tau f_0)$.
Repeated use of  Lemma~\ref{L:B1} and Lemma~\ref{L:B2} yields
\begin{equation}\label{EST:2a}
\begin{aligned}
&\|\tau \pi_j^\e f_0'T_{1,2}(\tau f_0)[f]-a_{1,\tau}(\xi_j^\e)B_{0,0}[(\pi_j^\e f)']\|_{H^{s-1}}
\leq  \frac{\gamma}{6} \|\pi_j^\e f\|_{H^{s}}+K\|f\|_{H^{s'}},\\[1ex]
&\|\pi_j^\e  T_{2,2}(\tau f_0)[f]- a_{2,\tau}(\xi_j^\e)B_{0,0}[(\pi_j^\e f)']\|_{H^{s-1}}\leq \frac{\gamma}{6} \|\pi_j^\e f\|_{H^{s}}+K\|f\|_{H^{s'}}
\end{aligned} 
\end{equation} 
for $|j|\leq N-1$, and 
\begin{align}\label{EST:2b}
&\|\tau\pi_N^\e f_0'T_{1,2}(\tau f_0)[f]\|_{H^{s-1}}+\| \pi_N^\e T_{2,2}(\tau f_0)[f]\|_{H^{s-1}}\leq \frac{\gamma}{3} \|\pi_N^\e f\|_{H^{s}}+K\|f\|_{H^{s'}},
\end{align}
provided that $\e$ is sufficiently small.
The functions  $a_{i,\tau}$, $i=1,\, 2$, $\tau\in[0,1]$, are given~by
\begin{align*}
 a_{1,\tau}&:=-\phi_1(\tau f_0)\frac{(\tau f_0')^4+3(\tau f_0')^2}{(1+(\tau f_0')^2)^2}
+\phi_2(\tau f_0)\frac{(\tau f_0')^3-\tau f_0'}{(1+(\tau f_0')^2)^2},\\[1ex]
 a_{2,\tau}&:=-\phi_1(\tau f_0)\frac{(\tau f_0')^2-1}{(1+(\tau f_0')^2)^2}
+\phi_2(\tau f_0)\frac{(\tau f_0')^3-\tau f_0'}{(1+(\tau f_0')^2)^2}.
\end{align*}

\noindent{\em Step 3.} We now consider the operators $f_0'T_{1,1}(\tau f_0)$   and $T_{2,1}(\tau f_0)$.
We first observe  that $a_1(f_0),\, a_2(f_0)-1,\,\phi_i(f_0)\in H^{s-1}(\mathbb{R})$, $i=1,\, 2.$
Repeated use of  Lemmas~\ref{L:B1}--\ref{L:B5} leads to 
\begin{equation}\label{EST:3a}
\begin{aligned}
&\|\tau \pi_j^\e f_0'T_{1,1}(\tau f_0)[f]-a_{3,\tau}(\xi_j^\e)B_{0,0}[(\pi_j^\e f)']\|_{H^{s-1}}\leq  \frac{\gamma}{6} \|\pi_j^\e f\|_{H^{s}}+K\|f\|_{H^{s'}},\\[1ex]
&\|\pi_j^\e T_{2,1}(\tau f_0)[f]-a_{4,\tau}(\xi_j^\e)B_{0,0}[(\pi_j^\e f)']\|_{H^{s-1}}\leq \frac{\gamma}{6} \|\pi_j^\e f\|_{H^{s}}+K\|f\|_{H^{s'}}
\end{aligned} 
\end{equation} 
for $|j|\leq N-1$  and 
\begin{equation}\label{EST:3b}
\begin{aligned}
&\|\tau \pi_N^\e f_0'T_{1,1}(\tau f_0)[f]\|_{H^{s-1}}+\| \pi_N^\e T_{2,1}(\tau f_0)[f]+B_{0,0}[(\pi_N^\e f)']\|_{H^{s-1}}\\[+1ex]
&\hspace{3cm}\leq \frac{\gamma}{3} \|\pi_N^\e f\|_{H^{s}}+K\|f\|_{H^{s'}},
\end{aligned}
\end{equation}
provided that $\e$ is sufficiently small.
The functions  $a_{i,\tau}$, $i=3,\, 4$,  $\tau\in[0,1]$, are given~by
\begin{align*}
 a_{3,\tau}:=a_1(\tau f_0)\tau f_0'- a_{1,\tau}\quad\text{and}\quad a_{4,\tau}:=-a_2(\tau f_0)- a_{2,\tau}.
\end{align*}
Gathering \eqref{EST:0}-\eqref{EST:3b}, we conclude that \eqref{D1} holds true and the proof is complete.
\end{proof}

Making use of $\Psi_1(f_0)\in H^{s-1}(\mathbb{R})$ and recalling the definition of the functions~$a_i$ in Lemma~\ref{L:MP1},
we conclude there exists $\eta\in(0,1)$ such that 
\[
\eta\leq \alpha_\tau\leq \frac{1}{\eta}\quad\text{and}\quad |\beta_\tau|\leq \frac{1}{\eta},\qquad\tau\in[0,1].
\]
Given  $\alpha\in[\eta,1/\eta]$ and $|\beta|\leq 1/\eta$, we now introduce  the Fourier multipliers
\begin{align*} 
 \bA_{\alpha,\beta}:=- \alpha\Big(-\frac{d^2}{d\xi^2}\Big)^{1/2}+\beta\frac{d}{d\xi}\in\kL(H^{s}(\mathbb{R}), H^{s-1}(\mathbb{R})).  
 \end{align*}
 
 For two Banach spaces $X$, $Y$, let ${\rm Isom}(X,Y)$ denote the set of (linear and topological)
 isomorphism from $X$ to $Y$.
 
 It is a matter of direct computations using Fourier representations, see e.g. \cite[Proposition~4.3]{MBV19}, to find a constant~${\kappa_0\geq 1}$   such that
 \begin{align}
\bullet &\quad \lambda-\bA_{\alpha,\beta}\in {\rm Isom}(H^s(\mathbb{R}),H^{s-1}(\mathbb{R})),\qquad  \re\lambda\geq 1,\label{L:FM1}\\[1ex]
\bullet &\quad  \kappa_0\|(\lambda-\bA_{\alpha,\beta})[f]\|_{H^{s-1}}\geq |\lambda|\cdot\|f\|_{H^{s-1} }+\|f\|_{H^s}, \qquad f\in H^{s}(\mathbb{R}),\, \re\lambda\geq 1\label{L:FM2}.
\end{align}

 Proposition~\ref{T:AP} and the relations \eqref{L:FM1} and \eqref{L:FM2} combined enable us to establish the result announced in Proposition~\ref{T:GAP}.

\begin{proof}[Proof of  Proposition~\ref{T:GAP}]
Let $s'\in(3/2,s)$ and  let  $\kappa_0\geq1$ be the constant   in~\eqref{L:FM2}. 
  Proposition~\ref{T:AP} with ${\gamma:=1/2\kappa_0}$ implies that there are $\eps\in(0,1)$, a constant $K=K(\e)>0$
 and  bounded operators $\bA_{j,\tau}\in\kL(H^s(\mathbb{R}), H^{s-1}(\mathbb{R}))$, $ -N+1\leq j\leq N$ and $\tau\in[0,1],$ satisfying 
 \begin{equation*} 
  2\kappa_0\|\pi_j^\e\Phi(\tau )[f]-\bA_{j,\tau}[\pi^\e_j f]\|_{H^{s-1}}\leq \|\pi_j^\e f\|_{H^{s}}+2\kappa_0 K\|  f\|_{H^{s'}},\qquad f\in H^s(\mathbb{R}).
 \end{equation*}
Moreover,  \eqref{L:FM2}  yields
  \begin{equation*} 
    2\kappa_0\|(\lambda-\bA_{j,\tau})[\pi^\e_jf]\|_{H^{s-1}}\geq 2|\lambda|\cdot\|\pi^\e_jf\|_{H^{s-1}}+ 2\|\pi^\e_j f\|_{H^s}
 \end{equation*}
 for all $-N+1\leq j\leq N$, $\tau\in[0,1],$  $\re \lambda\geq 1$, and  $f\in H^s(\mathbb{R})$.
Combining these inequalities, we conclude that
 \begin{align*}
   2\kappa_0\|\pi_j^\e(\lambda-\Phi(\tau ))[f]\|_{H^{s-1}}\geq& 2\kappa_0\|(\lambda-\bA_{j,\tau})[\pi^\e_jf]\|_{H^{s-1}}-2\kappa_0\|\pi_j^\e\Phi(\tau)[f]-\bA_{j,\tau}[\pi^\e_j f]\|_{H^{s-1}}\\[1ex]
   \geq& 2|\lambda|\cdot\|\pi^\e_j f\|_{H^{s-1}}+ \|\pi^\e_j f\|_{H^s}-2\kappa_0K\|  f\|_{H^{s'}}.
 \end{align*}
We now sum  up over $j$ to deduce from \eqref{EQNO}, Young's inequality, and the interpolation property
\begin{align}\label{IP}
[H^{s_0}(\mathbb{R}),H^{s_1}(\mathbb{R})]_\theta=H^{(1-\theta)s_0+\theta s_1}(\mathbb{R}),\qquad\theta\in(0,1),\, -\infty< s_0\leq s_1<\infty,
\end{align}
where $[\cdot,\cdot]_\theta$ denotes the complex interpolation functor, 
that there exist constants  $\kappa\geq1$  and~$\omega >1 $ such that 
  \begin{align}\label{KDED}
   \kappa\|(\lambda-\Phi(\tau ))[f]\|_{H^{s-1}}\geq |\lambda|\cdot\|f\|_{H^{s-1}}+ \| f\|_{H^s}
 \end{align}
for all   $\tau\in[0,1],$   $\re \lambda\geq \omega$, and  $f\in H^s(\mathbb{R})$.

Additionally, \eqref{L:FM1} implies that $\omega-\Phi(0) \in {\rm Isom}(H^s(\mathbb{R}), H^{s-1}(\mathbb{R}))$.
 The method of continuity \cite[Proposition I.1.1.1]{Am95} and   \eqref{KDED}  imply that also
\begin{align}\label{DEDK2}
   \omega-\Phi(1)=\omega-\p\Psi(f_0)\in {\rm Isom}(H^s(\mathbb{R}), H^{s-1}(\mathbb{R})).
 \end{align}
Combining  \eqref{KDED} (with $\tau=1$) and \eqref{DEDK2}, we conclude that~\eqref{eq:GP} holds true, cf. \cite[Chapter~I]{Am95}.
\end{proof}

 We are now in a position to prove the main result, for which we can exploit abstract theory for fully nonlinear parabolic problems from \cite[Chapter 8]{L95}.

 \begin{proof}[Proof of Theorem~\ref{MT1}]
 {\em  Well-posedness:}
  For $\alpha\in(0,1)$,  $T>0$, and a Banach space $X$, we first introduce the weighted H\"olders spaces ${\rm C}^{\alpha}_{\alpha}((0,T], X)$
 which are essential for the theory in \cite[Chapter 8]{L95}. They are defined by
 \[
 {\rm C}^{\alpha}_{\alpha}((0,T], X):=\Big\{f:(0,T]\longrightarrow X\,:\, \text{$f$ is bounded and 
 $\sup_{s\neq t}\frac{\|t^\alpha f(t)-s^\alpha f(s)\|_{X}}{|t-s|^\alpha}<\infty$}\Big\}.
 \]
 
Lemma \ref{L:REG} and  Proposition~\ref{T:GAP} show  that the assumptions of \cite[Theorem 8.1.1]{L95} 
are  satisfied for the evolution problem~\eqref{NNEP}.
This theorem ensures that for each $f_0\in H^{s}(\mathbb{R})$ there exists a positive time $T>0$ and a solution $f(\cdot;f_0)$ to~\eqref{NNEP} such that
\[ f\in {\rm C}([0,T],H^{s}(\mathbb{R}))\cap {\rm C}^1([0,T], H^{s-1}(\mathbb{R}))\cap {\rm C}^{\alpha}_{\alpha}((0,T], H^s(\mathbb{R}))\] 
for some  $\alpha\in(0,1)$.
Furthermore, it states that the solution is unique within the set
\[
  \bigcup_{\alpha\in(0,1)}{\rm C}^{\alpha}_{\alpha}((0,T],H^s(\mathbb{R})) \cap {\rm C}([0,T],H^{s}(\mathbb{R}))\cap {\rm C}^1([0,T], H^{s-1}(\mathbb{R})).
 \]
We improve this statement by showing that  the solution is actually  unique within 
$${\rm C}([0,T],H^{s}(\mathbb{R}))\cap {\rm C}^1([0,T], H^{s-1}(\mathbb{R})).$$
Indeed,  suppose $\wt f:[0,T]\longrightarrow H^s(\mathbb{R})$ is another solution to \eqref{NNEP}  satisfying the same initial condition $f_0$.  Since  \eqref{NNEP} is an  autonomous problem,
 we may assume   $f(t)\neq \wt f(t)$ for ${t\in(0,T]}$. 
 Let now $s'\in (3/2,s)$ and set $\alpha:= s-s'\in(0,1)$. 
 Using  \eqref{IP}, we find a constant~$C>0$ such that
 \begin{equation}\label{BO}
\|f(t_1)-f(t_2)\|_{H^{s'}} +\|\wt f(t_1)-\wt f(t_2)\|_{H^{s'}} \leq C|t_1-t_2|^\alpha,\qquad t_1,\, t_2\in[0, T],
 \end{equation}
which shows that $f,\, \wt f\in   {\rm C}^{\alpha}_{\alpha}((0,T], H^{s'}(\mathbb{R}))$.
 Applying the uniqueness statement from \cite[Theorem 8.1.1]{L95}  to~\eqref{NNEP} with $\Psi\in {\rm C}^{\infty}(H^{s'}(\mathbb{R}), H^{s'-1}(\mathbb{R}))$
 shows now that $f=\wt f$ on $[0,T]$.
This unique solution can be extended up to a maximal existence time $T_+(f_0)$, see \cite[Section 8.2]{L95}. 
Finally, \cite[Proposition 8.2.3]{L95} shows that the solution map defines  a semiflow on $H^s(\mathbb{R})$.
This proves~(i). \medskip

\noindent{\em  Parabolic smoothing:} The uniqueness statement in (i) enables us  to 
use a parameter trick which was successfully applied also to other problems, cf., e.g., \cite{An90, ES96, PSS15, MBV19}, in order to establish~(iia) and~(iib).
In our setting the proof details are similar to those in \cite[Theorem~1.2~(v)]{MBV18} or \cite[Theorem 1.2~(ii)]{AbMa20x} and therefore we omit them.  \medskip

\noindent{\em  Global existence:} We prove the statement by contradiction. 
Assume  there exists a maximal solution   
$f\in {\rm C}([0,T_+),H^{s}(\mathbb{R}))\cap {\rm C}^1([0,T_+), H^{s-1}(\mathbb{R}))$ to \eqref{NNEP} with $T_+<\infty$ and such that  
\begin{align}\label{BOUN1}
\sup_{[0,T_+)} \|f(t)\|_{H^s}<\infty.
\end{align}
 The bound \eqref{BOUN1} together with Lemma \ref{L:MP0}~(ii) implies that 
\begin{align}\label{BOUN2}
\sup_{[0,T_+)}\Big\| \frac{df}{dt}(t)\Big\|_{H^{s-1}}=\sup_{[0,T_+)}\|\Psi(f(t))\|_{H^{s-1}}<\infty.
\end{align}
Choosing $s'\in(3/2,s)$, we may argue as above, see \eqref{BO}, to conclude from \eqref{BOUN1} and~\eqref{BOUN2} that ${f:[0,T_+)\longrightarrow H^{s'}(\mathbb{R})}$ is uniformly continuous. 
Applying \cite[Proposition 8.2.1]{L95} to~\eqref{NNEP} with $\Psi\in {\rm C}^{\infty}(H^{s'}(\mathbb{R}), H^{s'-1}(\mathbb{R}))$, 
we may extend the solution $f$  to  an interval $[0,T_+')$ with $T_+<T_+'$ and such that 
 \[
   f\in {\rm C}([0,T_+'),H^{s'}(\mathbb{R}))\cap {\rm C}^1([0,T_+'), H^{s'-1}(\mathbb{R})).
 \]  
 The parabolic smoothing property established  in (iib) (with $s$ replaced by $s'$) implies in particular that $f\in {\rm C}^1((0,T_+'),H^{s}(\mathbb{R}))$, in contradiction to  the maximality of $f$.
 This completes our arguments.
  \end{proof}

\appendix

\section{The hydrodynamic potential near $\Gamma$ \label{appA}}

 This appendix is devoted to the study of the properties of the functions $(v^\pm,q^\pm)$ defined in~ \eqref{defvq}  near the boundary $\Gamma$.
Lemma \ref{neargamma} below establishes the corresponding part of Theorem~\ref{T:1}.

\begin{lemma}\label{neargamma}
  Given $f\in H^3(\R)$,  the functions $(v^\pm,q^\pm)$ given by \eqref{defvq} satisfy   
$$(v^\pm,q^\pm)\in   {\rm C}^1(\overline{\Omega^\pm},\mathbb{R}^2) \times   {\rm C}(\overline{\Omega^\pm}) $$ 
and solve the equations
\begin{align}\label{BCGamma}
\left.
\begin{array}{rcll}
 [v]&=&0&\mbox{on $\Gamma$,}\\[1ex]
 {}[T(v,q)](\nu\circ\Xi^{-1})&=&-\sigma(\kappa\nu)\circ\Xi^{-1}&\mbox{on $\Gamma$.}
 \end{array}
 \right\}
\end{align}
 Moreover, the velocity field $v$ on $\Gamma$ has an explicit representation in terms of nonlinear singular integral operators, given by 
\begin{equation}\label{vonGamma}
\begin{aligned}
 \hspace{-0.2cm} v_1|_\Gamma\circ\Xi&=\frac{(B_{2,2}^0(f)-B_{0,2}^0(f))[g_1-f'g_2]-B_{1,2}^0(f)[3f'g_1+g_2] -B_{3,2}^0(f)[f'g_1-g_2]}{4\pi\mu \sigma^{-1}},\\[1ex]
  \hspace{-0.2cm} v_2|_\Gamma\circ\Xi&=\frac{B_{0,2}^0(f)[f'g_1-g_2]+(B_{3,2}^0(f)-B_{1,2}^0(f))[g_1-f'g_2]-B_{2,2}^0(f)[f'g_1+3g_2]}{4\pi\mu\sigma^{-1}},
\end{aligned}
\end{equation}
with $B^0_{k,2}$ and $g_j$ defined in \eqref{BNM}, \eqref{defB0} and \eqref{derive}, \eqref{Phii}, respectively.

\end{lemma}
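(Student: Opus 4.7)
The plan is to verify the three claims---regularity up to $\Gamma$, the jump conditions \eqref{BCGamma}, and the explicit formula \eqref{vonGamma}---by identifying $(v,q)$ with a classical Stokes hydrodynamic single-layer potential and then reducing its boundary trace to the singular integral operators $B_{n,2}^0(f)$. As a first step, I would integrate by parts in \eqref{defvq} (permissible since $g_k\to 0$ at infinity) to obtain
\[
v(x)=-\sigma\int_\R \mathcal{U}^k(x-(s,f(s)))\,g_k'(s)\,ds,\qquad q(x)=-\sigma\int_\R \mathcal{P}^k(x-(s,f(s)))\,g_k'(s)\,ds.
\]
By \eqref{derive} one has $g'(s)=\omega(s)(\kappa\nu)(s)$, so pulling back the arclength measure via $\Xi$ shows that $(v,q)$ is precisely the Stokes single-layer potential on $\Gamma$ with vector density $-\sigma\widetilde\kappa\widetilde\nu\in H^1(\Gamma)^2$.

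From this identification, the regularity claims follow from the classical mapping properties of the Stokes single-layer: since $\mathcal{U}^k$ is only logarithmically singular, $v$ extends continuously across $\Gamma$, yielding $[v]=0$ and \eqref{BCGamma}$_1$; differentiating in $x$ produces Cauchy-type kernels whose one-sided limits on $\Gamma$ exist because $\sigma g_k'\in H^1(\R)\hookrightarrow {\rm C}^{1/2}(\R)$ is H\"older continuous, giving $v\in {\rm C}^1(\overline{\Omega^\pm},\R^2)$. Similarly $q\in {\rm C}(\overline{\Omega^\pm})$ from each side, with a jump across $\Gamma$ given by a Plemelj--Sokhotski-type formula for the Cauchy-type kernel $\mathcal{P}^k$. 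The traction jump \eqref{BCGamma}$_2$ then follows from the classical single-layer jump identity for Stokes, the density being precisely $-\sigma\widetilde\kappa\widetilde\nu$ (the sign fixed by the convention $\mu\Delta\mathcal{U}^k-\nabla\mathcal{P}^k=\delta_0 e^k$). I expect this traction-jump step to be the main technical obstacle, as it requires a careful splitting of the singular integral near each boundary point, an asymptotic expansion of the near-field part as $x\to\Gamma$ from either side, and the use of the distributional Stokes equations for $(\mathcal{U}^k,\mathcal{P}^k)$ to extract the correct coefficient.

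To derive the explicit formula \eqref{vonGamma}, I would pass to the limit $x=\Xi(\xi)$ in the equivalent representation \eqref{repvq} and change variables $\eta:=\xi-s$, so that
\[
r=(\eta,\,\delta_{[\xi,\eta]}f),\qquad |r|^2=\eta^2\bigl[1+(\delta_{[\xi,\eta]}f/\eta)^2\bigr],
\]
with $\delta_{[\xi,\eta]}u=u(\xi)-u(\xi-\eta)$ as in \eqref{BNM}. Substituting into the explicit expressions \eqref{stokesdiff} for the entries of $\partial_1 M(r)+f'(s)\partial_2 M(r)$, every component of the integrand becomes a rational expression in $\eta$ and $\delta_{[\xi,\eta]}f$ whose denominator is a power of $1+(\delta_{[\xi,\eta]}f/\eta)^2$. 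Multiplying by $g(\xi-\eta)$, regrouping by the powers $n\in\{0,1,2,3\}$ of $(\delta_{[\xi,\eta]}f/\eta)$ in the numerator, and matching with the definitions \eqref{BNM}, \eqref{defB0} of $B_{n,2}^0(f)$ produces the linear combinations of operators appearing on the right-hand side of \eqref{vonGamma}; reading off the coefficients of $g_1-f'g_2$, $3f'g_1+g_2$, $f'g_1-g_2$, and $f'g_1+3g_2$ then finishes the computation. The principal-value convergence of the remaining $B^0_{n,2}(f)$ integrals is guaranteed by the antisymmetry of the $1/\eta$ factor under $\eta\to-\eta$ together with the boundedness assertions of Lemma~\ref{L:MP0}.
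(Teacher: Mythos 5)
Your plan is in the right spirit---identify $(v,q)$ with a hydrodynamic single-layer potential, reduce the traces to the operators $B^0_{n,2}(f)$, and use jump relations---but it diverges from the paper in the decisive step and has a few genuine gaps.

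First, the integration by parts producing $v(x)=-\sigma\int_\R\clu^k(r)g_k'(s)\,ds$ is not justified. The kernel $\clu^k$ contains a term $\delta_{jk}\ln\tfrac{1}{|r|}$ that grows like $\ln|s|$, and $g_k\in H^2(\R)$ only ensures $g_k(s)\to 0$ with no rate, while the resulting integrand $\ln|s|\cdot g_k'(s)$ is not guaranteed to lie in $L_1$ (Cauchy--Schwarz fails because $\ln|s|\notin L_2$ on a tail). This is precisely why the paper keeps the $s$-derivative on the kernel in \eqref{defvq}, where $\partial_s(\clu^k(r))=O(s^{-1})$; it never passes through a ``classical'' log-kernel single-layer at all.

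Second, invoking ``classical mapping and jump properties of the Stokes single-layer'' is not available off the shelf here: the interface $\Gamma$ is an unbounded graph, not a compact curve, and the paper is explicit that it is \emph{recovering} the results of \cite{Lad63} in this different setting. Concretely, the only external ingredient the paper uses is the jump relation \eqref{jump0} for the two components of the 2D Cauchy kernel on a graph, taken from \cite{MBV18}. From this and the auxiliary integral $I[\phi]$ of \eqref{defI}--\eqref{IrepGamma} (which is continuous across $\Gamma$), one obtains a $4\times 4$ linear system \eqref{lsys} in the four operators $Z_n$, whose coefficient matrix $\cla$ is inverted explicitly to read off the one-sided traces $\{Z_n[\phi]\}^\pm=B^0_{n,2}(f)[\phi]\mp(\text{explicit jump term})\phi$. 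This linear-algebra argument \emph{is} the technical content of the lemma; your plan does not replace it.

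Third, and related, your derivation of \eqref{vonGamma} by ``passing to the limit $x=\Xi(\xi)$'' in \eqref{repvq} and ``regrouping by powers'' silently assumes that the one-sided limits of the $Z_n[\phi]$ are just the principal-value integrals $B^0_{n,2}(f)[\phi]$, which is false: each $Z_n$ has a nonzero one-sided jump. The identity $[v]=0$ and the formula \eqref{vonGamma} are \emph{jointly} consequences of the fact that these jump contributions cancel in the particular linear combination defining $v$; one cannot obtain the trace formula without establishing that cancellation.

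Finally, for the traction jump \eqref{BCGamma}$_2$ the paper takes a genuinely different and cleaner route than the one you outline. Rather than proving a traction jump formula for the single-layer, it differentiates \eqref{defvq}$_1$ in $x$, integrates by parts once (now legitimate, since $\partial_i\clu^k=O(|r|^{-1})$), and uses $g'=\omega\kappa\nu$ to write $\partial_iv$ as $Z_n$-type operators applied to the $H^1$ density $\kappa$. The same $Z_n$ jump algebra then shows that $[\nabla v]=0$, i.e.\ $v\in{\rm C}^1(\R^2)$ (see Remark~\ref{R:1}); the only contribution to $[T(v,q)]\widetilde\nu$ is therefore the pressure jump $[q]=\sigma\kappa$, which is exactly the Cauchy-kernel jump from \cite{MBV18}. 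This exploits the specific fact that the density is an arclength derivative, and it entirely bypasses the ``careful splitting of the singular integral near each boundary point'' that you anticipated to be the main obstacle.
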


Before establishing  Lemma \ref{neargamma} we make the following observation.

\begin{rem}\label{R:1}
It is shown in the proof of Lemma \ref{neargamma} that  not only $v$ is continuous in $\R^2$, but  also
the first order partial derivatives of $v$, that is 
\begin{align*}
v\in {\rm C}^1(\R^2).
\end{align*}
In particular, we get
\[
 [T(v,q)](\nu\circ\Xi^{-1})=-[q]\nu\circ\Xi^{-1})\qquad\text{on $\Gamma$.}
\]
\end{rem}

\begin{proof}[Proof of Lemma \ref{neargamma}] We first recall the notation from Section \ref{fixtime}, in particular \eqref{defr}.
  From     \cite[Lemma 2.1 and Eq. (2.7)]{MBV18} we infer that, given~${\psi\in H^{1}(\mathbb{R})}$, the functions $A^\pm:\Omega^\pm\to\mathbb{R}$ defined by
\begin{align}\label{FApm}
 A^\pm (x)=\frac{1}{2\pi}\int_\mathbb{R}\frac{r^\top}{|r|^2}\psi(s)\,ds=\frac{1}{2\pi}\int_\mathbb{R}\frac{(r_1^3+r_1r_2^2, r_1^2r_2+r_2^3)^\top}{|r|^4}\psi(s)\,ds,
\end{align}
belong  to ${\rm C}(\overline{\Omega^\pm})$   and, given $\xi\in\mathbb{R}$,  we have
\be\label{jump0}
A^\pm( \xi,f(\xi))=\frac{1}{2\pi}\PV\int_\mathbb{R}\frac{(\xi-s,f(\xi)-f(s))^\top}{(\xi-s)^2+(f(\xi)-f(s))^2}\psi(s)\,ds\pm\frac{1}{2}\frac{\nu\psi}{\omega}(\xi),
\ee
where $\PV$ denotes the principal value.
Recalling \eqref{derive} and \eqref{defvq}, it follows that ${q^\pm\in {\rm C}(\overline{\Omega^\pm})}$ and 
\be\label{jumpq}
[q]\circ\Xi=\sigma\kappa.
\ee

 We next   consider the behavior of the velocity $v^\pm$ near $\Gamma$.
To this end, we first introduce the integral operators $\big[\phi\mapsto Z_n[\phi]\big]$, $n=0,\ldots,3$, by
\[ Z_n[\phi](x):=\int_\RRM\frac{r_1^{3-n}r_2^n}{|r|^4}\phi(s)\,ds,\qquad  x\in\RRM^2\setminus\Gamma,\quad \phi\in H^1(\RRM),\]
and let 
\[\{w\}^\pm(\xi):=\lim_{ \Omega^\pm\ni x\to(\xi,f(\xi))} w(x),\qquad  \xi\in \mathbb{\R},\]
be the one-sided limits of any function $w:\RRM^2\setminus\Gamma\to\RRM$ at $\Gamma$, whenever these limits exist.

Using this notation and the operators $B^0_{n,2}$ defined in \eqref{defB0}, we have from \eqref{jump0} 
\begin{equation}\label{jump1}
    \left.\begin{array}{rcl}
   \left\{Z_0[\phi]+Z_2[\phi]\right\}^\pm
   &=&B_{0,2}^0(f)[\phi]+B^0_{2,2}(f)[\phi]
   \mp\displaystyle\frac{\pi f'}{\omega^2},\\[1.5ex]
   \left\{Z_1[\phi]+Z_3[\phi]\right\}^\pm
   &=&B_{1,2}^0(f)[\phi]+B^0_{3,2}(f)[\phi]
   \pm\displaystyle\frac{\pi}{\omega^2}.
   \end{array}\right\}
\end{equation}
For $ x\in\RRM^2\setminus\Gamma$, $\phi\in H^1(\RRM)$, consider further the integral 
\be\label{defI}
I[\phi](x):=\int_\RRM \frac{(r_1r_2,r_2^2)^\top}{|r|^2}\phi'(s)\,ds.
\ee
A straightforward application of Lebesgue's dominated convergence theorem shows that $I$ extends continuously to $\Gamma$, that is $I\in {\rm C}(\R^2)$. 
Integration by parts yields
\begin{equation}\label{Irepaway}
I[\phi]=\big((Z_0-Z_2)[f'\phi]-(Z_1-Z_3),\;2Z_1[f'\phi]-2Z_2[\phi]\big)^\top\quad\mbox{
on $\RRM^2\setminus\Gamma$,}
\end{equation} and 
\begin{equation}\label{IrepGamma}
I[\phi]\circ\Xi=\big((B^0_{0,2}(f)-B^0_{2,2}(f))[f'\phi] -(B^0_{1,2}(f)-B^0_{3,2}(f))[\phi],\,2B^0_{1,2}(f)[f'\phi]-2B_{2,2}(f)[\phi]\big)^\top\!\!.
\end{equation}
So, by the continuity of $I$,
\begin{equation}\label{jump2}
\left.
\hspace{-0,55cm}\begin{aligned}
 \left\{(Z_0-Z_2)[f'\phi]-(Z_1-Z_3)\right\}^\pm &= (B^0_{0,2}(f)-B^0_{2,2}(f))[f'\phi] -(B^0_{1,2}(f)-B^0_{3,2}(f))[\phi],\\
    \left\{2Z_1[f'\phi]-2Z_2[\phi]\right\}^\pm &=2B^0_{1,2}(f)[f'\phi]-2B_{2,2}(f)[\phi].
   \end{aligned}\right\}
   \end{equation}
Observe that Eqns. \eqref{jump1}, \eqref{jump2} constitute a linear system of the form
\begin{equation}\label{lsys}
\left\{\sum_{n=0}^3 Z_n[a_{in}\phi]\right\}^\pm=\sum_{n=0}^3B^0_{n,2}(f)[a_{in}\phi]\pm J_i\phi,\qquad i=1,\ldots,4,\quad \phi\in H^1(\RRM),
\end{equation}
with coefficient matrix
\[\cla:=(a_{in}):=\left(\begin{array}{ccccccc}
1&&0&&1&&0\\
0&&1&&0&&1\\
f'&&-1&&-f'&&1\\
0&&2f'&&-2&&0
\end{array}\right)\]
and with $(J_1,\ldots,J_4):=\displaystyle\frac{\pi}{\omega^2}(-f',\,1,\,0,\,0)^\top$.
The matrix $\cla$ is regular and has inverse
\[\cla^{-1}:=(a^{ki}):=\frac{1}{2\omega^2}
\left(\begin{array}{ccccccc}
2+{f'}^2 && -f' && -f' && 1\\
f' && 1 && -1 && f'\\
{f'}^2 && f' &&  -f' && -1\\
-f' && 1+2{f'}^2 && 1 && -f'
\end{array}\right).\]
Observe that for $k=0,\ldots,3$, $i=1,\ldots,4$,  and $\phi\in H^1(\RRM)$ we have $a^{ki}\phi\in H^1(\RRM)$.

For $k=0,\ldots,3$, replace $\phi$ by $a^{ki}\phi$ in the $i$-th equation of \eqref{lsys} and sum over $i$. This gives
\[\{Z_k[\phi]\}^\pm=B^0_{k,2}(f)[\phi]\pm \sum_{i=1}^4 a^{ki}J_i\phi,\]
or equivalently
\begin{equation*}
\begin{aligned}
 \frac{1}{2\pi}\left\{\int_\R {\frac{(r_1^3,r_1^2 r_2,r_1r_2^2,r_2^3)^\top}{|r|^4}} \phi\, ds\right\}^\pm& =\frac{(B_{0,2}^0(f),B_{1,2}^0(f),B_{2,2}^0(f),B_{0,2}^0(f))^\top[\phi]}{2\pi}\\[1ex]
&\hspace{0,45cm}\mp\Big(\frac{f'^3+3f'}{4\omega^4},\frac{f'^2-1}{4\omega^4},\frac{f'^3-f'}{4\omega^4},-\frac{3f'^2+1}{4\omega^4}\Big)^\top\phi.
 \end{aligned}
 \end{equation*}
In view of this, we get directly from \eqref{stokesdiff} and \eqref{defvq}$_1$ that $[v]=0$  and the representation~\eqref{vonGamma} is valid. 

 Moreover, first differentiating with respect to $x$  under the integral in \eqref{defvq}$_1$ and then using integration by parts and \eqref{derive} we find
\[\partial_iv(x)=  \sigma \int_\mathbb{R} \partial_i \clu^1(r)(f'\kappa)(s)-\partial_i \clu^2(r)\kappa(s)\,ds,\quad i=1,\,2.\]
It is now straightforward to check that $v^\pm\in {\rm C}^1(\overline{\Omega^\pm})$ and $[\nabla v]=0$. 
Together with \eqref{jumpq}, this implies \eqref{BCGamma}$_2$, and the proof is complete. 
\end{proof}

\section{The hydrodynamic potential in the far-field limit\label{appB}}

 In this appendix we prove that the functions $(v,q)$ defined in \eqref{defvq} satisfy the far-field boundary condition~\eqref{fixtimeeq}$_5$.
While the claim for $q$ follows directly from \cite[Lemma 2.1]{MBV18}, proving that the velocity vanishes 
at infinity is more elaborate and necessitates some preparation. 

Recall  that $f\in H^3(\R)$ is fixed.
  Using the notation \eqref{defr} once again, we  define functions~${\big[\phi\mapsto(F,G)[\phi]\big]}$ according to (cf.~\eqref{defI})
\begin{align}\label{BB121}
(F,G)^{\top}[\phi](x):=I[\phi](x)=\int_\mathbb{R}\frac{(r_1r_2,r_2^2)^\top}{|r|^2}\phi'(s)\,ds,\qquad\phi\in H^{1}(\mathbb{R}),  x\in\RRM^2.
\end{align}
We recall from Appendix \ref{appA} that  $ F,\, G \in{\rm C}(\R^2)$.
Moreover, from \eqref{IrepGamma} and Lemma \ref{L:MP0}~(ii), we get
\be\label{hoelderint}
(F,G)|_\Gamma\circ\Xi\in {\rm C}^\beta(\mathbb{R})
\ee
for some $\beta\in(0,1)$, and 
\be\label{intlim}
(F,G)(\xi,f(\xi))\to 0\qquad\mbox{for $|\xi|\to\infty$.}
\ee

We first prove a bound for $F$ and $G$ at moderate distances from the interface, in terms of their values at the interface.

\begin{lemma}[Vertical differences]\label{vertdiff} 
 Given $f\in H^3(\R)$ and $\phi\in H^2(\R)$, there exist constants $\alpha\in(0,1)$ and $C_0>0$ such that for 
\[ x\in S:=\mathbb{R}\times[-\|f\|_\infty-1,\|f\|_\infty+1]\] we have
\[|(F,G)( x)-(F,G)(x_1,f(x_1))|\leq C_0|x_2-f(x_1)|^\alpha.\]
\end{lemma}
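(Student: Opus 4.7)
The plan is to derive an explicit integral representation for the vertical differences $F(x) - F(\bar x)$ and $G(x) - G(\bar x)$ (with $\bar x := (x_1, f(x_1))$), and then estimate the resulting integrals by a near/far split of the $s$-integration at $|s - x_1| = |h|$. Writing $a := x_1 - s$, $b := f(x_1) - f(s)$, $h := x_2 - f(x_1)$, so that $\bar r = (a,b)$ and $r = (a,b+h)$, an elementary algebraic manipulation gives
\begin{align*}
F(x) - F(\bar x) &= h \int_{\mathbb{R}} \frac{a\,(a^2 - b^2 - bh)}{(a^2 + (b+h)^2)(a^2 + b^2)}\,\phi'(s)\,ds,\\
G(x) - G(\bar x) &= h \int_{\mathbb{R}} \frac{a^2\,(2b + h)}{(a^2 + (b+h)^2)(a^2 + b^2)}\,\phi'(s)\,ds.
\end{align*}

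The case $|h| \geq 1$ reduces to a uniform bound for $F$ and $G$ on $S$, which I would establish from the pointwise estimate $|r_1 r_2|/|r|^2 \leq |r_2|/|s-x_1|$ for $|s-x_1| \geq 1$ (using $x \in S$ and $f \in L^\infty$) combined with Cauchy--Schwarz and the embedding $\phi' \in H^1(\mathbb{R}) \hookrightarrow L^2(\mathbb{R}) \cap L^\infty(\mathbb{R})$. The substantive task is therefore $|h| \leq 1$, and the decisive input is a geometric lower bound on the denominator: since $\Gamma$ is the graph of a Lipschitz function with $L := \|f'\|_\infty$, the distance from $x$ to $\Gamma$ is at least $|h|/\sqrt{1+L^2}$, which delivers
\[
a^2 + (b+h)^2 \geq \frac{h^2}{1+L^2}\qquad\text{for every } s \in \mathbb{R}.
\]
Together with $|b| \leq L|a|$, this provides enough denominator control to carry out the split.

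In the near region $|a| \leq |h|$, the denominator is then bounded below by $c h^2 a^2$ while the numerator is bounded above by $C|a|^3 + C a^2 |h|$; after multiplication by $|h|$ the integrand is $\leq C|a|/|h| + C$, and integration over $|a| \leq |h|$ yields an $O(|h|)$ contribution. In the far region $|a| > |h|$ I use the simpler bound $(a^2+(b+h)^2)(a^2+b^2) \geq a^4$, giving integrand $\leq C/|a| + C|h|/a^2$; after multiplying by $|h|$, Cauchy--Schwarz with $\phi' \in L^2$ handles the $|h|/|a|$ piece by $|h|\,\|\phi'\|_2\,\sqrt{2/|h|} = O(\sqrt{|h|})$, while the $h^2/a^2$ piece integrates to $O(|h|)$. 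Summing the near and far contributions produces the desired estimate with $\alpha = 1/2$, and the argument for $G$ is identical since its numerator has the same order of magnitude.

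The main technical hurdle is the denominator control across all regimes. The naive bound $(a^2+(b+h)^2)(a^2+b^2) \geq a^4$ becomes useless as $|a| \to 0$, where the integrand would otherwise appear to blow up like $1/|h|$. The geometric lower bound $a^2 + (b+h)^2 \geq c h^2$, a consequence of the Lipschitz graph structure of $\Gamma$, is what rescues the near region; once this is in hand, the remaining work amounts to routine integrations supported by the embeddings $\phi' \in L^2 \cap L^\infty$.
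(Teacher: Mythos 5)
Your argument is correct, and it follows a genuinely different route than the paper's. The paper does \emph{not} compare $F(x)$ directly with $F(x_1,f(x_1))$; instead it first projects $x$ to the \emph{nearest} point $\bar x=(\xi_0,f(\xi_0))$ of $\Gamma$ and estimates $|F(x)-F(\bar x)|$ by a change of variables and a two-term split, using the H\"older continuity of $\phi'$ (from $H^1(\R)\hookrightarrow C^{2\alpha}(\R)$) to handle one term and a pointwise kernel-difference estimate for the other; the vertical comparison is then concluded by invoking the H\"older regularity of $F|_\Gamma\circ\Xi$ (from \eqref{hoelderint}, which rests on Lemma~\ref{L:MP0}(ii)) together with the minimality of $\bar x$. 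You instead subtract the two kernels exactly, arriving at the explicit representations with numerators $ha(a^2-b^2-bh)$ and $ha^2(2b+h)$, and control the resulting integrals by a near/far split at $|a|=|h|$, the decisive ingredient being the Lipschitz-graph estimate $a^2+(b+h)^2\ge h^2/(1+L^2)$ (which follows from $|h|\le|h+b|+L|a|\le\sqrt{1+L^2}\,|r|$ by Cauchy--Schwarz). Both arguments are sound; yours is more elementary and self-contained in that it avoids the H\"older embedding for $\phi'$, avoids the nearest-point projection, and does not need \eqref{hoelderint}, and it yields the explicit exponent $\alpha=1/2$. The paper's version leans on the already-available boundary regularity of $F|_\Gamma$ and is shorter once that machinery is in place. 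Minor remark: the reduction of the case $|h|\ge1$ to uniform boundedness of $F,G$ on $S$, which you derive from $|r_1r_2|/|r|^2\le|r_2|/|r_1|$, Cauchy--Schwarz, and the embeddings of $\phi'$, is essentially the same observation the paper makes when it invokes boundedness of $F$ on $S$ inside its own proof.
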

\begin{proof}
We    show the estimate for $F$ only, the arguments for $G$ are analogous with some obvious modifications.
Given $x\in S,$ we  choose $ \bar x:=(\xi_0,f(\xi_0))$ such that
\[ |x-\bar x|=\min\{|x-(\xi,f(\xi))|\,|\,\xi\in\mathbb{R}\}.\]
After a change of variables we split
\begin{align*}
&\hspace{-0,5cm}|F(x)-F( \xi_0,f(\xi_0) )|\\[1ex]
&\leq \int_\mathbb{R}|\phi'(x_1-s)-\phi'(\xi_0-s)|
\frac{|s||x_2-f(x_1-s)|}{s^2+(x_2-f(x_1-s))^2}\,ds\\[1ex]
&\hspace{0,5cm}+\int_\mathbb{R}|\phi'(\xi_0-s)|\left|\frac{s(x_2-f(x_1-s))}{s^2+(x_2-f(x_1-s))^2}-
\frac{s(f(\xi_0)-f(\xi_0-s))}{s^2+(f(\xi_0)-f(\xi_0-s))^2}\right|\,ds.
\end{align*}
We estimate the terms on the right separately. 
For the first one we use $\phi'\in {\rm C}^{2\alpha}(\mathbb{R})$ for some $\alpha\in (0,\min\{\beta,1/2\}]$ to obtain
\begin{align*}
&\hspace{-0,5cm}\int_\mathbb{R}|\phi'(x_1-s)-\phi'(\xi_0-s)|\frac{|s||x_2-f(x_1-s)|}{s^2+(x_2-f(x_1-s))^2}\,ds\\[1ex]
&\leq C|x_1- \xi_0|^\alpha\int_\mathbb{R}|\phi'(x_1-s)-\phi'(\xi_0-s)|^{1/2}
\frac{|s||x_2-f(x_1-s)|}{s^2+(x_2-f(x_1-s))^2}\,ds\\[1ex]
&\leq C|x_1-\xi_0|^\alpha\|\phi'\|_2^{1/2}\left(
\int_\mathbb{R}\left(\frac{|s||x_2-f(x_1-s)|}{s^2+(x_2-f(x_1-s))^2}\right)^{4/3}\,ds\right)^{3/4}\\[1ex]
&\leq C|x_1-\xi_0|^\alpha\left(\int_\mathbb{R}\min\{1,s^{-4/3}\}\,ds\right)^{3/4}
\leq C | x-\overline x|^\alpha.
\end{align*}
To estimate the second term we write for brevity $\zeta:=x_2-f(x_1-s)$, $\zeta_0:=f(\xi_0)-f(\xi_0-s)$ and observe
\begin{align*}
    \left|\frac{s\zeta}{s^2+\zeta^2}-\frac{s\zeta_0}{s^2+\zeta_0^2}\right|
    &=\left|\frac{s^3-s\zeta\zeta_0}{(s^2+\zeta^2)(s^2+\zeta_0^2)}\right||\zeta-\zeta_0|\leq \Big(\frac{|s|}{s^2+\zeta^2}+\frac{|\zeta|}{s^2+\zeta^2}\Big)|\zeta-\zeta_0|\\[1ex]
    &\leq \frac{2}{(s^2+\zeta^2)^{1/2}}(| x_2-f(\xi_0)|+C|x_1-\xi_0|)\leq \frac{C}{(s^2+\zeta^2)^{1/2}}|x-\overline x| ,
\end{align*}
to obtain
\[\int_\mathbb{R}|\phi'(\xi_0-s)|\left|\frac{s\zeta}{s^2+\zeta^2}-\frac{s\zeta_0}{s^2+\zeta_0^2}\right|\,ds
\leq C\int_\mathbb{R}|\phi'(\xi_0-s)|\frac{| x-\overline x|}{(s^2+\zeta^2)^{1/2}}\,ds.\]
We split the integral on the right. For $|s|<1$ we use the minimality property of $\bar x$ to obtain
\[\frac{| x-\bar x|}{(s^2+\zeta^2)^{1/2}}=\frac{| x-\bar x|^{1/2}}{(s^2+\zeta^2)^{1/4}}\frac{|x-\bar x|^{1/2}}{| x-(x_1-s,f(x_1-s))|^{1/2}}\leq  \frac{| x-\bar x|^{1/2}}{s^{1/2}}\]
and thus
\[\int_{\{|s|<1\}}|\phi'(\xi_0-s)|\frac{| x-\bar x|}{(s^2+\zeta^2)^{1/2}}\,ds
\leq C\|\phi'\|_\infty|x-\bar x|^{1/2}\int_{\{|s|<1\}}s^{-1/2}\,ds
\leq C|x-\bar x|^{1/2}.\]
For $|s|>1$ we estimate directly 
\[\int_{\{|s|>1\}}|\phi'(\xi_0-s)|\frac{| x-\bar x|}{(s^2+\zeta^2)^{1/2}}\,ds
\leq | x-\bar x|\|\phi'\|_2\Big(\int_{\{|s|>1\}}s^{-2}\,ds\Big)^{1/2}\leq C| x-\bar x|.\]
Summarizing and using the boundedness of $F$ on $S$ (which follows by applying H\"older's inequality to \eqref{BB121}) we get
\[|F(x)-F(\bar x)|\leq C| x-\bar x|^\alpha,\]
and consequently, using $\eqref{hoelderint}$,
\begin{align*}
    |F( x)-F(x_1,f(x_1))|&\leq |F(x)-F(\xi_0,f( \xi_0))|+|F( \xi_0,f( \xi_0))-F(x_1,f(x_1))|\\[1ex]
    &\leq C(| x-\bar x|^\alpha+|x_1- \xi_0|^\alpha)\leq C | x-\bar x|^\alpha\\[1ex]
    &\leq C|x_2-f(x_1)|^\alpha,
\end{align*}
where the minimality property of $\bar x$ has been used again in the last step.\end{proof}

 We next prove that the functions $F,\,G$ defined in \eqref{BB121} vanish at infinity.

\begin{lemma}\label{asyFG}
 Given $f\in H^3(\R)$ and $\phi\in H^{2}(\R),$ we have
\[(F,G)(x)\to 0\qquad\mbox{for $|x|\to\infty$.}\]
\end{lemma}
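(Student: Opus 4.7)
The plan is to separate the analysis into two regimes: $x$ in a thin vertical tube around $\Gamma$, where Lemma \ref{vertdiff} combined with \eqref{intlim} suffices, and $x$ outside the tube, where an integration-by-parts formula together with a Cauchy--Schwarz splitting argument handles the asymptotics. Fix $\eps>0$. Choose $\delta>0$ with $C_0\delta^\alpha<\eps/2$ (possible by Lemma \ref{vertdiff}) and $R_1$ with $|(F,G)(\xi,f(\xi))|<\eps/2$ for $|\xi|\ge R_1$ (possible by \eqref{intlim}). Setting $T_\delta:=\{x\in S:|x_2-f(x_1)|<\delta\}$, the triangle inequality combined with Lemma \ref{vertdiff} yields $|(F,G)(x)|<\eps$ for every $x\in T_\delta$ with $|x_1|\ge R_1$.

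For $x\notin T_\delta$ I would integrate by parts in the complex form. Setting $z:=r_1+ir_2$ and using $r_1r_2+ir_2^2=r_2z$, one has $F+iG=\int_\R(r_2/\bar z)\,\phi'(s)\,ds$. Since $\phi\in H^2(\R)\hookrightarrow C_0(\R)$, integration by parts yields
\[F+iG=-\int_\R\phi(s)\,\frac{r_2-f'(s)r_1}{\bar z(x,s)^2}\,ds,\qquad |F+iG|\le\int_\R|\phi|\,\frac{1+|f'|}{|r|}\,ds.\]
Splitting $\R$ at $|s|=N$: on $|s|\le N$, the bound $|r|\ge|x_1|/2$ (valid once $|x_1|\ge 2N$) gives
\[\int_{|s|\le N}|\phi|(1+|f'|)/|r|\,ds\le\frac{2}{|x_1|}\bigl(\|\phi\|_2\sqrt{2N}+\|\phi\|_\infty\|f'\|_2\bigr),\]
which vanishes as $|x_1|\to\infty$ for fixed $N$. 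On $|s|>N$, Cauchy--Schwarz delivers
\[\int_{|s|>N}|\phi|(1+|f'|)/|r|\,ds\le\bigl(\|\phi\mathbf 1_{|s|>N}\|_2+\|\phi\|_\infty\|f'\mathbf 1_{|s|>N}\|_2\bigr)\,\|1/|r|\|_{L^2_s},\]
and the prefactor tends to $0$ as $N\to\infty$ because $\phi,f'\in L^2(\R)$.

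The crux of the proof is therefore a uniform bound on $\|1/|r(x,\cdot)|\|_{L^2_s}$ as $x\to\infty$ outside $T_\delta$. For $x\notin S$ this is immediate: the pointwise inequality $|x_2-f(s)|\ge 1$ yields $\int ds/|r|^2\le\pi$. For $x\in S\setminus T_\delta$, the condition $|x_2-f(x_1)|\ge\delta$ together with $f\in C_0(\R)$ gives $|x_2|\ge\delta/2$ once $|x_1|$ is large enough that $|f(x_1)|<\delta/2$; choosing $S_0$ with $|f(s)|<\delta/4$ for $|s|>S_0$, one has $|x_2-f(s)|\ge\delta/4$ on $\{|s|>S_0\}$, whence $\int_{|s|>S_0}ds/|r|^2\le 4\pi/\delta$, while $\int_{|s|\le S_0}ds/|r|^2\le 2S_0/(|x_1|-S_0)^2=o(1)$. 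This bookkeeping, resting essentially on the decay of $f$ at infinity, is the main technical obstacle: a naive dominated-convergence attempt on the integral definition of $F$ and $G$ fails because the kernels $r_1r_2/|r|^2$ and $r_2^2/|r|^2$ are only $O(1)$, not integrable in $s$, so one really needs to exhibit two derivatives worth of cancellation via the complex IBP above.
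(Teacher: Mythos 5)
Your proposal is correct in substance and takes a somewhat more unified route than the paper. The paper splits the far-field analysis into three explicit regimes -- near-graph with $|x_1|$ large (step (i)), $|x_2|$ large uniformly in $x_1$ (step (ii)), and intermediate $|x_2|$ with $|x_1|$ large (step (iii)) -- whereas you merge (ii) and (iii) into a single argument: a split at $|s|=N$ together with a uniform $L^2$-bound on $1/|r(x,\cdot)|$. Your complex-form integration by parts, which gives $F+iG=-\int_\R\phi\,(r_2-f'r_1)/\bar z^2\,ds$, is a clean alternative to the paper's component-wise formula \eqref{Irepaway}; both reduce to essentially the same effective bound $|F+iG|\le C\int_\R|\phi|(1+|f'|)/|r|\,ds$. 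The paper's step (ii) even obtains a quantitative $O(|x_2|^{-1/2})$ decay by a change of variables; your version needs only the uniform $L^2$-bound, which suffices for the qualitative statement.

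There is, however, one small but genuine gap. Your estimate on $\{|s|\le N\}$ relies solely on $|r|\ge|x_1|/2$ (valid for $|x_1|\ge 2N$) and hence yields a bound of the form $C_N/|x_1|$, which forces you to let $|x_1|\to\infty$. But $|x|\to\infty$ outside $T_\delta$ also admits the sub-case $|x_1|$ bounded and $|x_2|\to\infty$, and there the $\{|s|\le N\}$ estimate as written gives nothing. The fix is one line -- for $|s|\le N$ one has $|r|\ge\max\{|x_1|-N,\;|x_2|-\|f\|_\infty\}$, which tends to infinity uniformly in $|s|\le N$ as $|x|\to\infty$ with $N$ fixed -- but it should be supplied, since otherwise the argument does not cover the full limit $|x|\to\infty$. (A cosmetic slip: the bound $\int_{|s|\le N}|\phi||f'|\,ds\le\|\phi\|_\infty\|f'\|_2$ is missing a factor $\sqrt{2N}$, or should read $\|\phi\|_2\|f'\|_2$ by Cauchy--Schwarz; this does not affect the conclusion.)
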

\begin{proof}
We will show the result for $F$, the proof for $G$ is essentially analogous.
The result is proved in the following three steps:
\begin{itemize}
    \item[(i)] For any $\eps>0$ there are $ \xi_0>0$, $\delta\in(0,1)$ such that $|x_1|> \xi_0$ and $|x_2|<\delta$  imply~$|F(x)|<\eps$,
    \item[(ii)] For any $\eps>0$ there is a $\delta\in(0,1)$  such that $| x_2|>\delta^{-1}$ implies $|F(x)|<\eps$ for all~${x_1\in\RRM}$,
    \item[(iii)] For all $\eps>0$, $\delta\in(0,1)$ there is a $\xi_1>0$ such that $|x_1|>\xi_1$ and
    $\delta\leq|x_2|\leq \delta^{-1}$ imply $|F(x)|<\eps$.
 \end{itemize}

To show (i), fix $\eps>0$, choose first $\delta>0$ small enough to ensure $C_0(2\delta)^\alpha<\eps/2$ with~$C_0$ and~$\alpha$ from Lemma \ref{vertdiff}, 
and then $\xi_0$ large enough to guarantee that $|f(x_1)|<\delta$ and $|F(x_1,f(x_1))|<\eps/2$ whenever $|x_1|> \xi_0$, which is possible by \eqref{intlim}. 
Now it follows from Lemma \ref{vertdiff} that $|F(x)|<\eps$  whenever $|x_1|> \xi_0$, $|x_2|<\delta$. 
 
For (ii) we have to prove that $ F( x)\to0$ for $|x_2|\to\infty$, uniformly in $x_1\in\mathbb{R}$. 
From \eqref{Irepaway} we immediately have
 \[|F(x)|\leq C \int_\mathbb{R}\frac{|\phi(s)|(|r_1|+|r_2|)}{|r|^2}\,ds.\]
 Choosing $|x_2|>2\|f\|_\infty,$ we get $|x_2|/2\leq|r_2|\leq 3|x_2|/2$ and using the Cauchy-Schwarz inequality we get
\begin{align*}
    |F(x)|&\leq C\int_\mathbb{R}\frac{|\phi(s)|(|r_1|+|x_2|)}{4r_1^2+|x_2|^2}\,ds
    \leq C\|\phi\|_2\Big(\int_\mathbb{R}\Big(\frac{|r_1|+|x_2|}{4r_1^2+|x_2|^2}\Big)^2\,ds\Big)^{1/2}\\[1ex]
    &\leq\frac{C}{|x_2|^{1/2}}\Big(\int_\mathbb{R}\frac{(t+1)^2}{(4t^2+1)^2}\,dt\Big)^{1/2}\leq\frac{C}{|x_2|^{1/2}},
\end{align*}
where we changed variables according to $t:=(x_1-s)/|x_2|$ in the last step.
This proves (ii).

To show (iii), let $\eps>0$ and $\delta\in(0,1)$ be fixed. 
Given $x_2\in\R$ with $\delta\leq|x_2|\leq \delta^{-1}$, it follows that
  $| r_2|\leq\delta^{-1}+\|f\|_\infty$.
Choose $s_0>0$ large enough to ensure 
\begin{align}
\bullet \quad&(\delta^{-1}+\|f\|_\infty)\sqrt{2\pi/\delta}\left(\int_{|s|>s_0} {\phi'}^2(s)\,ds\right)^{1/2}<\eps/2\quad\text{and}\label{chs_0}\\[1ex]
\bullet \quad&|f(s)|<\delta/2 \quad\mbox{for $|s|>s_0$.}\label{chs_1}
\end{align}
Given  $x_1\in\R$ with $ |x_1|>2s_0$,  in view of \eqref{BB121} we  obtain   the estimate
\[|F( x)|\leq \int_\mathbb{R}\frac{|r_1r_2|}{ |r|^2}|{\phi'}(s)|\,ds.\]
We split the integral on the right as follows.
If $|s|<s_0,$ then $|r_1|>|x_ 1|/2$  and 
\[\int_{\{|s|<s_0\}}\frac{|r_1r_2|}{|r|^2}|{\phi'}(s)|\,ds\leq \frac{4s_0(\delta^{-1}+\|f\|_\infty)\|{\phi'}\|_\infty}{|x_1|}=\frac{C}{|x_1|}<\frac{\eps}{2},\]
provided that $|x_1|>\xi_1$  with $\xi_1> 2s_0$ chosen sufficiently large. 

If $|s|>s_0$, then $|r_2|>\delta/2$ by \eqref{chs_1}  and, using \eqref{chs_0} and  the Cauchy-Schwarz inequality, we get
\begin{align*}
    \int_{\{|s|>s_0\}}\frac{|r_1r_2|}{|r|^2}|{\phi'}(s)|\,ds&\leq(\delta^{-1}+\|f\|_\infty)\int_{\{|s|>s_0\}}\frac{| r_1|}{ r_1^2+(\delta/2)^2}|{\phi'}(s)|\,ds\\[1ex]
&\leq (\delta^{-1}+\|f\|_\infty)\left(\int_\mathbb{R}\frac{1}{t^2+(\delta/2)^2 }\,dt\right)^{1/2}
\left(\int_{|s|>s_0} {\phi'}^2(s)\,ds\right)^{1/2}\\[1ex]
&\leq (\delta^{-1}+\|f\|_\infty)\sqrt{2\pi/\delta}
\left(\int_{|s|>s_0} {\phi'}^2(s)\,ds\right)^{1/2}<\frac{\eps}{2}.
\end{align*}
This completes the proof.
\end{proof}

 We are now in a position to prove the desired  decay behavior.

\begin{lemma}\label{L:BB3}
Given $f\in H^3(\R)$, the functions $(v^\pm,q^\pm)$ given by \eqref{defvq}  satisfy
\[(v^\pm,q^\pm)(x) \to 0\qquad \mbox{for $|x|\to\infty$.}\]
\end{lemma}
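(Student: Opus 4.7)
The pressure claim is immediate: from~\eqref{defvq} and $\clp^k(y)=-y_k/(2\pi|y|^2)$,
\[
q(x)=\frac{\sigma}{2\pi}\sum_{k=1}^2\int_\R\frac{r_k}{|r|^2}g_k'(s)\,ds,
\]
which is a sum of two integrals of the $A^\pm$-type in~\eqref{FApm} with densities $g_k'\in H^1(\R)$ (note that $\phi_i(f)\in H^2(\R)$ by~\eqref{Phii}, so $g_k\in H^2(\R)$). Hence $q^\pm(x)\to 0$ at infinity follows directly from~\cite[Lemma 2.1]{MBV18}, and no further work is required.

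For the velocity, the plan is to split the fundamental tensor
\[
\clu_j^k(y)=-\frac{1}{4\pi\mu}\Big(\delta_{jk}\ln\frac{1}{|y|}+\frac{y_jy_k}{|y|^2}\Big)
\]
into its logarithmic part and its dyadic part and to handle each separately in $v_j(x)=\sigma\int\p_s\clu_j^k(r)g_k(s)\,ds$. A direct computation yields $\p_s\ln(1/|r|)=(r_1+f'(s)r_2)/|r|^2$, a bounded kernel of exactly the $A^\pm$-type appearing in~\eqref{FApm}. Pairing it with $g_k(s)$ and summing over $k$ produces a linear combination of $A^\pm$-type integrals with densities $g_j$ and $f'g_j$; these lie in $H^1(\R)$ because $g_j\in H^2(\R)$, $f'\in H^2(\R)$, and $H^2(\R)$ is an algebra. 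The decay at infinity of this contribution is then another application of~\cite[Lemma 2.1]{MBV18}.

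For the dyadic part I would use that $y_jy_k/|y|^2$ is uniformly bounded on $\R^2\setminus\{0\}$ and $g_k(\pm\infty)=0$. Integration by parts in~$s$ is therefore admissible with vanishing boundary terms and gives
\[
-\frac{\sigma}{4\pi\mu}\sum_k\int_\R g_k(s)\,\p_s\Big(\frac{r_jr_k}{|r|^2}\Big)ds
=\frac{\sigma}{4\pi\mu}\sum_k\int_\R\frac{r_jr_k}{|r|^2}g_k'(s)\,ds.
\]
Writing out the two components, using the identity $r_1^2/|r|^2=1-r_2^2/|r|^2$, and absorbing the resulting constant factor via $\int_\R g_k'\,ds=0$, the remaining integrals are precisely the operators $F[\cdot]$ and $G[\cdot]$ from~\eqref{BB121}; explicitly, the $v_1$-contribution should be proportional to $-G[g_1]+F[g_2]$ and the $v_2$-contribution to $F[g_1]+G[g_2]$. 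Since $g_k\in H^2(\R)$, Lemma~\ref{asyFG} then provides the required decay, and combining the two contributions yields $v^\pm(x)\to 0$.

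The only genuine difficulty beyond citing the two decay lemmas is the bookkeeping needed to identify each resulting integral as an $A^\pm$, $F$, or $G$ operator evaluated at a density in the correct Sobolev class; the structural idea--separating logarithmic and dyadic parts and shifting the $s$-derivative onto $g_k$ only for the bounded dyadic kernel--is what makes the two integration-by-parts manipulations legitimate and keeps one inside the hypotheses of Lemma~\ref{asyFG} and~\cite[Lemma 2.1]{MBV18}.
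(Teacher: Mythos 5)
Your proposal is correct and follows essentially the same route as the paper: it arrives at the same decomposition of $v$ into a logarithmic-kernel contribution handled by~\cite[Lemma 2.1]{MBV18} and a dyadic-kernel contribution handled by Lemma~\ref{asyFG}, just derived by splitting $\clu_j^k$ up front rather than by quoting the identity $\p_s(r_1^2/|r|^2)=-\p_s(r_2^2/|r|^2)$ as the paper does. The treatment of $q$ is also the same.
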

\begin{proof}
Since $f\in H^3(\R)$, for $\psi\in H^{1}(\mathbb{R})$  the functions $A_\pm$ defined in \eqref{FApm} satisfy $A^\pm(x)\to0 $ for $|x|\to\infty$, cf. \cite[Lemma 2.1]{MBV18}.
Recalling the definition \eqref{defvq}$_2$ of $q$, it immediately follows (since $g_k'\in H^1(\R)$, $k=1,\, 2$) that 
\[q(x)\to 0 \qquad\mbox{for $|x|\to\infty$}.\]
Moreover, the equation \eqref{defvq}$_1$ together with the relation $\p_s(r_1^2/|r|^2)=-\p_s( r_2^2/ |r|^2)$ enable us to write 
\[
v(x)=\frac{\sigma}{4\pi\mu}
\int_\mathbb{R}\frac{1}{|r|^2}
\left(
\begin{array}{ccc}
-r_2^2&& r_1r_2\\
 r_1r_2&& r_2^2
\end{array}\right)g'(s)\,ds
-\frac{\sigma}{4\pi\mu}\int_\mathbb{R}\frac{r_1+f'(s)r_2}{|r|^2}g(s)\,ds,\quad x\in\R^2\setminus\Gamma.\]
Since $g_k\in H^2(\R)$, $k=1,\,2$, Lemma~\ref{asyFG} implies that the first integral vanishes at infinity.
Due to \cite[Lemma 2.1]{MBV18}, also the second integral vanishes in the far field limit, hence
\[v^\pm(x)\to 0 \qquad\mbox{for $|x|\to\infty$}.\]
\end{proof}

\section{Smoothness of some nonlinear operators\label{appsmooth}}
 In this appendix we establish the smoothness of certain nonlinear operators  we are confronted with in Section~\ref{Sec:3}.
In the following $r\in(1/2,1)$ is fixed.

\begin{lemma}\label{mult}
Let $\psi:\mathbb{R}\longrightarrow\mathbb{R}$ be locally Lipschitz continuous, i.e.
for any $K>0$ there is a constant $L_K>0$ such that
\[|\psi(\xi)-\psi(\xi')|\leq L_K|\xi-\xi'|\quad\mbox{ for all $\xi,\,\xi'\in[-K,K]$}.\]
Let $z\in H^r(\mathbb{R})$. 
Then, the map
$[h\mapsto(\psi\circ z)h]$
is in $\kL(H^r(\mathbb{R}))$.
\end{lemma}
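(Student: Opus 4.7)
The plan is to reduce the claim to the Banach algebra property of $H^r(\mathbb{R})$ for $r > 1/2$, after showing that $\psi\circ z$ is (up to an additive constant) itself an element of $H^r(\mathbb{R})$.

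First I would use the Sobolev embedding $H^r(\mathbb{R})\hookrightarrow L_\infty(\mathbb{R})$ (valid since $r>1/2$) to conclude that $z$ is bounded, say $\|z\|_\infty \leq K$. With the local Lipschitz constant $L_K$ provided by hypothesis, this immediately yields the pointwise bound
\[
 |\psi(z(x))-\psi(z(y))| \leq L_K\,|z(x)-z(y)|, \qquad x,y\in\mathbb{R},
\]
and in particular $|\psi\circ z - \psi(0)| \leq L_K |z|$, so that $\psi\circ z - \psi(0)\in L_2(\mathbb{R})$. Using the Gagliardo--Slobodeckij characterization of the fractional Sobolev seminorm (available because $r\in(1/2,1)$), the pointwise bound above gives
\[
[\psi\circ z]_{H^r}^2 = \iint_{\mathbb{R}^2} \frac{|\psi(z(x))-\psi(z(y))|^2}{|x-y|^{1+2r}}\,dx\,dy \leq L_K^2\,[z]_{H^r}^2 < \infty,
\]
so $\psi\circ z - \psi(0) \in H^r(\mathbb{R})$.

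Next I would invoke the fact that for $r > 1/2$ the space $H^r(\mathbb{R})$ is a Banach algebra under pointwise multiplication. This is standard and follows by combining the embedding $H^r\hookrightarrow L_\infty$ with the elementary bound $|(uv)(x)-(uv)(y)| \leq \|u\|_\infty|v(x)-v(y)| + \|v\|_\infty |u(x)-u(y)|$ inserted into the Gagliardo seminorm; it delivers a constant $C>0$ such that
\[
\|uv\|_{H^r} \leq C\,\|u\|_{H^r}\|v\|_{H^r}, \qquad u,v\in H^r(\mathbb{R}).
\]
Applied to $u = \psi\circ z - \psi(0)$ and $v = h$, this yields that $h\mapsto (\psi\circ z-\psi(0))h$ belongs to $\kL(H^r(\mathbb{R}))$. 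Since $h\mapsto \psi(0)h$ is trivially in $\kL(H^r(\mathbb{R}))$, adding the two gives the result.

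There is no serious obstacle; the only mild subtlety is that $\psi\circ z$ itself need not lie in $L_2(\mathbb{R})$ (for instance when $\psi(0)\neq 0$), which is why the additive constant $\psi(0)$ has to be separated off before applying the Banach algebra estimate.
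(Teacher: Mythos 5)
Your proof is correct, and it organizes the argument somewhat differently from the paper. The paper estimates the operator $h\mapsto(\psi\circ z)h$ directly: it bounds $\|(\psi\circ z)h\|_2$ by $\|\psi\circ z\|_\infty\|h\|_2$ and then controls the shift-difference seminorm $[(\psi\circ z)h]_{H^r}$ via the product-rule splitting
$\|\tau_\eta((\psi\circ z)h)-(\psi\circ z)h\|_2 \leq \|\psi\circ z\|_\infty\|\tau_\eta h-h\|_2 + \|h\|_\infty L_K\|\tau_\eta z-z\|_2$.
In this route $\psi\circ z$ is treated throughout as a bounded ``rough'' multiplier; the paper never needs to show $\psi\circ z-\psi(0)\in H^r$, and in particular the $L_2$-integrability issue you flag simply does not arise because the paper only ever multiplies by $h\in L_2$. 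You instead factor the statement into two modular pieces: the composition estimate $\psi\circ z-\psi(0)\in H^r(\mathbb{R})$ with $[\psi\circ z]_{H^r}\leq L_K[z]_{H^r}$, and the Banach-algebra property of $H^r(\mathbb{R})$ for $r>1/2$; the constant $\psi(0)$ is then peeled off separately. Both proofs ultimately rest on the same two elementary ingredients (Sobolev embedding $H^r\hookrightarrow L_\infty$, and a bilinear product-rule bound for the Gagliardo/shift-difference seminorm), but your intermediate result that Nemytskii-type maps preserve $H^r$ up to constants is slightly stronger than what the paper records explicitly — it is in effect the content that the paper uses later via the implicit function theorem in Lemma~\ref{phismooth} — while the paper's direct estimate is shorter and avoids the $\psi(0)$-subtraction entirely.
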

\begin{cor}\label{isom}
If, in addition to the assumptions of Lemma {\rm\ref{mult}}, $\psi$ is strictly positive, then the linear operator $[h\mapsto(\psi\circ z)h]$ is an isomorphism of $H^r(\mathbb{R})$.
\end{cor}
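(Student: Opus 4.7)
The plan is to construct the inverse explicitly as multiplication by $1/(\psi\circ z)$ and invoke Lemma~\ref{mult} twice: once for $\psi$ itself (which is already done in Lemma~\ref{mult}) and once for the reciprocal $1/\psi$.

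First, I will use the fact that $r>1/2$ implies a Sobolev embedding $H^r(\mathbb{R})\hookrightarrow {\rm C}(\mathbb{R})\cap L_\infty(\mathbb{R})$, so $K:=\|z\|_\infty<\infty$ and $z$ takes values in the compact interval $[-K,K]$. Since $\psi$ is continuous (being locally Lipschitz) and strictly positive on $\mathbb{R}$, its restriction to $[-K,K]$ attains a positive minimum $m>0$, and hence $\psi\circ z\geq m>0$ pointwise on $\mathbb{R}$. In particular, $1/(\psi\circ z)$ is well-defined and bounded.

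Next, I will verify that $\widetilde\psi:=1/\psi$ is itself locally Lipschitz on $\mathbb{R}$. On any interval $[-M,M]$, the local Lipschitz constant $L_M$ of $\psi$ together with the positive lower bound $m_M:=\min_{[-M,M]}\psi>0$ (guaranteed by continuity and strict positivity on a compact set) yields
\[
|\widetilde\psi(\xi)-\widetilde\psi(\xi')|=\frac{|\psi(\xi)-\psi(\xi')|}{\psi(\xi)\,\psi(\xi')}\leq\frac{L_M}{m_M^2}|\xi-\xi'|,\qquad \xi,\xi'\in[-M,M].
\]
Applying Lemma~\ref{mult} with $\widetilde\psi$ in place of $\psi$ then shows that $[h\mapsto(1/(\psi\circ z))h]\in\kL(H^r(\mathbb{R}))$.

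Finally, denote by $M_\psi$ and $M_{1/\psi}$ the two multiplication operators, both bounded on $H^r(\mathbb{R})$ by the preceding step and Lemma~\ref{mult}. Because the underlying multiplications are pointwise and the factors are mutually reciprocal, the compositions $M_\psi\circ M_{1/\psi}$ and $M_{1/\psi}\circ M_\psi$ agree pointwise with the identity, and hence coincide with the identity on $H^r(\mathbb{R})$. This exhibits $M_{1/\psi}$ as a two-sided inverse of $M_\psi$, so $M_\psi$ is an isomorphism of $H^r(\mathbb{R})$. The only delicate point is the local Lipschitz property of $1/\psi$, which reduces to the elementary estimate above once strict positivity on compact intervals is noted; everything else is a direct application of the preceding lemma.
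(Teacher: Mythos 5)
Your proof is correct and is the natural argument the paper has in mind (the corollary is stated without proof): you show $1/\psi$ is again locally Lipschitz when $\psi$ is strictly positive, apply Lemma~\ref{mult} to it, and observe that the two multiplication operators are mutual two-sided inverses on $H^r(\mathbb{R})$.
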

\begin{proof}[Proof of Lemma~\ref{mult}]
Recall that a norm in $H^r(\mathbb{R})$,  which is equivalent to the standard norm, is given by 
\[\|z\|^2_{H^r}=\|z\|_2^2+[z]_{H^r}^2 \quad\text{with}\quad [z]^2_{H^r}=
\int_\mathbb{R}\frac{\|\tau_\eta z-z\|_2^2}{| \eta|^{1+2r}}\,d\eta,\]
where $\tau_\eta:=[z\mapsto z(\cdot-\eta)]$ denotes the right shift operator. 
Let $K=\|z\|_\infty$ and observe
\begin{align*}
    \|\psi\circ z\|_\infty&\leq |\psi(0)|+KL_K,\\[1ex]
    \|(\psi\circ z)h\|_2&\leq  \|\psi\circ z\|_\infty\|h\|_2\leq C \|h\|_2,\\[1ex]
    [(\psi\circ z)h]_{H^r}^2&=\int_\mathbb{R}\frac{\|\tau_\eta((\psi\circ z)h)-(\psi\circ z)h\|^2_2}{|\eta|^{1+2r}}\,d\eta,\\[1ex]
                             \|\tau_\eta((\psi\circ z)h)-(\psi\circ z)h\|_2
    &\leq \|\psi\circ z\|_\infty\|\tau_\eta h-h\|_2+\|h\|_\infty L_K\|\tau_\eta z-z\|_2
\end{align*}
to find
\[\|(\psi\circ z)h\|_{H^r}\leq C\|h\|_{H^r},\quad h\in H^r(\R).\]
\end{proof}

 We now use  Corollary~\ref{isom} to establish in Lemma \ref{phismooth} the smoothness of three nonlinear operators.
Lemma~\ref{phismooth} is the main argument in the proof of Lemma~\ref{L:MP1}.

\begin{lemma}\label{phismooth}
The maps
\[\mbox{\rm (i)}\quad
z\mapsto\sqrt{z^2+1}-1,\qquad
\mbox{\rm (ii)}\quad
z\mapsto\frac{z^2}{\sqrt{z^2+1}+z^2+1},\qquad
\mbox{\rm (iii)}\quad
z\mapsto\frac{z}{\sqrt{z^2+1}}
\]
are (${\rm C}^\infty$-)smooth from $H^r(\mathbb{R})$ to $H^r(\mathbb{R})$.
\end{lemma}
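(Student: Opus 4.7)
I plan to treat all three maps uniformly by factoring the scalar function $h:\mathbb{R}\to\mathbb{R}$ as
\[
h(t)=t^{n}\psi(t),\qquad n\in\{1,2\},
\]
with $\psi\in C^{\infty}(\mathbb{R})$ strictly positive and with all derivatives uniformly bounded on $\mathbb{R}$. Explicitly,
\[
\sqrt{t^{2}+1}-1=\tfrac{t^{2}}{\sqrt{t^{2}+1}+1},\qquad \tfrac{t^{2}}{\sqrt{t^{2}+1}+t^{2}+1},\qquad \tfrac{t}{\sqrt{t^{2}+1}}=t\cdot(t^{2}+1)^{-1/2}.
\]
The two ingredients I will use are Lemma~\ref{mult} (applicable to $\psi$ and each of its derivatives, all of which are Lipschitz) and the Banach algebra inequality $\|uv\|_{H^{r}}\leq C\|u\|_{H^{r}}\|v\|_{H^{r}}$ for $r>1/2$, which follows from Lemma~\ref{mult} taken with the identity function $\psi(t)=t$ combined with the Sobolev embedding $H^{r}\hookrightarrow L_{\infty}$.

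With the factorization at hand, well-definedness of $z\mapsto h\circ z$ on $H^{r}$ is immediate: $z^{n}\in H^{r}$ by the algebra property, and multiplication by $\psi\circ z$ keeps us in $H^{r}$ by Lemma~\ref{mult}. I then proceed by induction on $m\geq 0$ to show
\[
h\circ\cdot\in C^{m}(H^{r},H^{r}),\qquad D^{m}(h\circ\cdot)(z_{0})[k_{1},\ldots,k_{m}]=h^{(m)}(z_{0})\,k_{1}\cdots k_{m}.
\]
Leibniz' rule gives $h^{(m)}(t)=\sum_{j\leq\min(n,m)}c_{j}\,t^{n-j}\,\psi^{(m-j)}(t)\in C^{\infty}(\mathbb{R})$, which is locally Lipschitz, so Lemma~\ref{mult} controls multiplication by $h^{(m)}\circ z_{0}$ on $H^{r}$ while the algebra inequality absorbs the extra factors $k_{1}\cdots k_{m}$. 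The inductive step rests on the pointwise Taylor identity
\[
h^{(m-1)}(z+k)-h^{(m-1)}(z)-h^{(m)}(z)\,k=\int_{0}^{1}(1-\tau)\,h^{(m+1)}(z+\tau k)\,k^{2}\,d\tau,
\]
which, multiplied by $k_{1}\cdots k_{m-1}$ and taken in $H^{r}$, yields a remainder of order $O(\|k\|_{H^{r}}^{2}\prod\|k_{i}\|_{H^{r}})$ in the $(m-1)$-linear operator norm; Lemma~\ref{mult} holds uniformly in $\tau\in[0,1]$ because $\|z+\tau k\|_{\infty}$ stays bounded for $\|k\|_{H^{r}}$ small, and the algebra inequality handles $k^{2}k_{1}\cdots k_{m-1}$.

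The main technical obstacle is the continuity of $z\mapsto D^{m}(h\circ\cdot)(z)$. By the algebra inequality it reduces to showing $\|g_{n}\|_{H^{r}}\to 0$, where $g_{n}:=h^{(m)}\circ z_{n}-h^{(m)}\circ z$ and $z_{n}\to z$ in $H^{r}$. The $L_{2}$-part follows directly from $\|g_{n}\|_{2}\leq L_{M}\|z_{n}-z\|_{2}\to 0$. For the Slobodeckii seminorm I would split the defining integral at $|\eta|=\delta$: the tail $|\eta|>\delta$ is bounded by $C\delta^{-2r}\|g_{n}\|_{2}^{2}\to 0$, while the head $|\eta|<\delta$ is dominated by $L_{M}^{2}\int_{|\eta|<\delta}(\|\tau_{\eta}z_{n}-z_{n}\|_{2}^{2}+\|\tau_{\eta}z-z\|_{2}^{2})\,|\eta|^{-1-2r}\,d\eta$, which is made arbitrarily small by first choosing $\delta$ small (tail of an integrable function) and then exploiting $\|z_{n}-z\|_{H^{r}}\to 0$ to obtain uniformity in $n$. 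This splitting is the only nonroutine step; everything else reduces to bookkeeping with Lemma~\ref{mult} and the algebra inequality.
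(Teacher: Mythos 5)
Your proof is correct, but it takes a genuinely different route from the paper's. The paper reduces each map to a \emph{polynomial} equation $F(z,u)=0$ (e.g.\ $u^2+2u-z^2=0$) and invokes the Implicit Function Theorem on $H^r\times H^r$, using Corollary~\ref{isom} to verify that $\partial_u F(z_0,u_0)=[h\mapsto 2\sqrt{z_0^2+1}\,h]$ is an isomorphism; part (iii) is then pulled out of (ii) by an algebraic identity. You instead prove the chain rule for the Nemytskii operator directly: after factoring $h(t)=t^n\psi(t)$, you establish $D^m(h\circ\cdot)(z_0)[k_1,\dots,k_m]=h^{(m)}(z_0)\,k_1\cdots k_m$ by induction, controlling the Taylor remainder via Lemma~\ref{mult} and the algebra property of $H^r$, and handling continuity of the top derivative through the $\delta$-split of the Slobodeckii seminorm. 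The trade-off is clear: the IFT route is shorter and avoids precisely the step you flag as ``the only nonroutine step'' (continuity of $z\mapsto h^{(m)}\circ z$ as an operator-valued map, which the abstract theorem delivers for free from smoothness of the polynomial $F$ and invertibility of the linearization), whereas your route is more elementary and yields an explicit formula for every Fréchet derivative. Two small points you should spell out if you write this up: (a) you need to observe, before estimating it, that $g_n=h^{(m)}\circ z_n-h^{(m)}\circ z$ actually lies in $H^r$ — this follows from the same Lipschitz bound $\|\tau_\eta g_n-g_n\|_2\leq L_M(\|\tau_\eta z_n-z_n\|_2+\|\tau_\eta z-z\|_2)$ that you use to estimate it, but it is not automatic since $h^{(m)}\circ z_n$ itself need not be in $L_2$; (b) the algebra inequality requires tracking that the constant produced by Lemma~\ref{mult} with $\psi=\mathrm{id}$ grows linearly in $\|z\|_{H^r}$, which is visible from the paper's proof of that lemma but is a bit more than a black-box citation.
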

\begin{proof}
Fix any $z_0\in H^r(\mathbb{R})$. Note first that
\[u_0:=\sqrt{z_0^2+1}-1=\frac{z_0}{\sqrt{z_0^2+1}+1}z_0\in H^r(\mathbb{R})\]
by Lemma \ref{mult}. 
Furthermore, a pair $(u,z)\in\left(H^r(\mathbb{R})\right)^2$ close to $(u_0,z_0)$ is a solution to the equation
\[F(z,u)=u^2+2u-z^2=0\]
if and only if $u=U(z):=\sqrt{z^2+1}-1$. 
The mapping $F$ is smooth from $\left(H^r(\mathbb{R})\right)^2$ to $H^r(\mathbb{R})$, and its Fr\'echet derivative $\partial_uF(z_0,u_0)$ is given by 
\[\partial_uF(z_0,u_0)h=2h\sqrt{z_0^2+1} ,\]
which is an isomorphism of $H^r(\mathbb{R})$ by Corollary \ref{isom}.
Now (i) follows from (the ${\rm C}^\infty$-version of) the Implicit Function theorem. 

The proof of (ii) is similar. First we note that
\[v_0:=\frac{z_0^2}{\sqrt{z_0^2+1}+z_0^2+1}\in H^r(\mathbb{R})\]
by Lemma \ref{mult}, and then we apply the Implicit Function theorem near $(z_0,v_0)$ to the equation
\[G(z,v)=z^2v+2v+(\sqrt{z^2+1}-1)v-z^2=0,\]
where $G$ is smooth from $\left(H^r(\mathbb{R})\right)^2$ to $H^r(\mathbb{R})$
due to (i).
Furthermore
\[\partial_vG(z_0,v_0)=[h\mapsto(\sqrt{z_0^2+1}+z_0^2+1)h]\]
is an isomorphism due to Corollary \ref{isom}. 

Finally, (iii) follows from (ii) and the identity
\[\frac{z}{\sqrt{z^2+1}}=z-z\frac{z^2}{\sqrt{z^2+1}+z^2+1}.\]
\end{proof}

 In the final part of this appendix we establish the smoothness of certain multilinear singular operators $B^k_{n,m}$.
Corollary~\ref{Bsmooth} below implies in particular that, given $n,\, m\in\N$, the mapping 
$$[f\mapsto B^0_{n,m}(f)[\cdot]]:H^s(\R)\to \kL( H^{s-1}(\mathbb{R})),$$ 
where $s\in(3/2,2)$ and with $B^0_{n,m}$ as defined in \eqref{defB0}, is smooth.
This property is essential when proving Lemma \ref{L:REG}.

We start by  introducing, for given  $k,\,n,\, m\in\N,$ operators
\[B_{n,m}^k:\;H^s(\mathbb{R})\longrightarrow\kL^k_{\rm sym}
(H^s(\mathbb{R}),\kL(H^{s-1}(\mathbb{R})))\]
by
\[B^k_{n,m}(f)[f_1,\ldots,f_k][h]=B_{n+k,m}(f,\ldots,f)
[f,\ldots,f,f_1,\ldots,f_k,h],\quad h\in H^s(\mathbb{R}),\]
where $B_{n+k,m}$ are  defined in \eqref{BNM}.
Observe that for $k=0$ this definition agrees with~\eqref{defB0}.
\begin{lemma}\label{Bdiff}
Given $k,\, n,\, m\in \N$ and $f\in H^s(\R)$,  the map $B_{n,m}^k$ is Fr\'echet differentiable at~$f$, and the Fr\'echet derivative $\partial B^k_{n,m}(f)$ is given by
\[\partial B^k_{n,m}(f)[g][f_1,\ldots,f_k]=nB^{k+1}_{n-1,m}(f)[f_1,\ldots,f_k,g]-2mB^{k+1}_{n+1,m+1}(f)[f_1,\ldots,f_k,g]\]
for $g\in H^s(\R)$.
\end{lemma}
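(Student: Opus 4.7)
The natural strategy is to differentiate under the integral sign in the defining formula~\eqref{BNM} with respect to $f$ and then verify that the formal result is the Fr\'echet derivative by estimating the quadratic remainder with Lemma~\ref{L:MP0}~(ii). Writing $\alpha := \delta_{[\xi,\eta]}f/\eta$, $\beta := \delta_{[\xi,\eta]}g/\eta$ and $\gamma_j := \delta_{[\xi,\eta]}f_j/\eta$, the integrand of $B^k_{n,m}(f+g)[f_1,\ldots,f_k][h](\xi)$ reads
\[
\frac{h(\xi-\eta)}{\eta}\cdot \frac{(\alpha+\beta)^n \prod_{j=1}^k \gamma_j}{[1+(\alpha+\beta)^2]^m},
\]
and the integrand of $B^k_{n,m}(f)[f_1,\ldots,f_k][h](\xi)$ is obtained by setting $\beta=0$.

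Next, I would expand both factors separately. The binomial formula gives $(\alpha+\beta)^n = \alpha^n + n\alpha^{n-1}\beta + \mathcal R_{\rm N}$, with $\mathcal R_{\rm N}=\sum_{j=2}^n \binom{n}{j}\alpha^{n-j}\beta^j$ containing at least two $\beta$-factors. Setting $A:=1+(\alpha+\beta)^2$ and $B:=1+\alpha^2$ (so $B-A=-2\alpha\beta-\beta^2$) and writing
\[
\frac{1}{A^m}-\frac{1}{B^m} = (B-A)\sum_{j=0}^{m-1}\frac{1}{A^{j+1}B^{m-j}},
\]
I would then expand each $A^{-(j+1)}$ around $B^{-(j+1)}$ to obtain
\[
\frac{1}{[1+(\alpha+\beta)^2]^m}-\frac{1}{[1+\alpha^2]^m}=-\frac{2m\alpha\beta}{[1+\alpha^2]^{m+1}}+\mathcal R_{\rm D},
\]
with $\mathcal R_{\rm D}$ carrying an overall factor $\beta^2$ multiplied by a smooth bounded function of $(\alpha,\beta)$. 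Multiplying the two expansions and collecting the terms linear in $\beta$ yields precisely
\[
\frac{n\alpha^{n-1}\beta\,\prod_j\gamma_j}{[1+\alpha^2]^m}-\frac{2m\alpha^{n+1}\beta\,\prod_j\gamma_j}{[1+\alpha^2]^{m+1}},
\]
which, integrated against $h(\xi-\eta)/\eta$, is exactly $nB^{k+1}_{n-1,m}(f)[f_1,\ldots,f_k,g][h]-2mB^{k+1}_{n+1,m+1}(f)[f_1,\ldots,f_k,g][h]$ as claimed.

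\textbf{Main obstacle.} The technical core is to show that the contributions from $\mathcal R_{\rm N}$, $\mathcal R_{\rm D}$ and their cross terms with the leading parts are $o(\|g\|_{H^s})$ in the norm of $\kL^k_{\rm sym}(H^s(\R),\kL(H^{s-1}(\R)))$. Every such contribution contains at least two factors of $\beta=\delta_{[\xi,\eta]}g/\eta$ in its numerator and, after absorbing the remaining powers of $\alpha$ and the denominator factors (of the form $[1+\alpha^2]^{-l}$ or $[1+(\alpha+\beta)^2]^{-l}$ with $l\geq m$) into the structure of \eqref{BNM}, it can be rewritten as a finite linear combination of $B_{n'',m''}$-type operators evaluated on a $b$-tuple that includes two copies of $g$ and with $a$-arguments drawn from $\{f,f+g\}$. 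Lemma~\ref{L:MP0}~(ii) then yields a bound of the form $C\|g\|_{H^s}^2\|h\|_{H^{s-1}}\prod_{j}\|f_j\|_{H^s}$, with $C$ locally uniform in $\|f\|_{H^s}$ and $\|g\|_{H^s}$. Summing the finitely many remainder pieces gives the desired $O(\|g\|_{H^s}^2)$ estimate and completes the verification of Fr\'echet differentiability together with the formula for $\partial B^k_{n,m}(f)[g]$.
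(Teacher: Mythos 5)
Your proposal is correct and follows essentially the same route as the paper: both identify the formal linear term by expanding the numerator and denominator of the kernel in powers of $\beta=\delta_{[\xi,\eta]}g/\eta$, recognize the result as the stated combination of $B^{k+1}_{n-1,m}$ and $B^{k+1}_{n+1,m+1}$, and bound the quadratic remainder—a finite sum of $B_{n'',m''}$-type operators carrying two copies of $g$ in the $b$-slots—via Lemma~\ref{L:MP0}~(ii). The only difference is presentational: the paper writes the remainder decomposition out explicitly, while you describe its structure abstractly, which is adequate.
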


 A straightforward consequence of Lemma~\ref{Bdiff} is the following corollary.

\begin{cor}\label{Bsmooth}
Given $k,\, n,\, m\in \N$, we have 
$$ B_{n,m}^k\in {\rm C}^\infty(H^s(\mathbb{R}),\kL^k_{\rm sym}
(H^s(\mathbb{R}),\kL( H^{s-1}(\mathbb{R}))).$$
\end{cor}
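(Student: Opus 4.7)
My plan is a straightforward induction on the order of differentiation, with the key observation being that Lemma \ref{Bdiff} shows the class of operators $\{B^k_{n,m}\}_{k,n,m\in\N}$ is closed under Fr\'echet differentiation: the derivative of a $B^k_{n,m}$ stays in the same class, only with the index $k$ incremented (and $n,m$ shifted). Since each $B^j_{n',m'}$ is bounded as a map into $\kL^j_{\rm sym}(H^s(\mathbb{R}),\kL(H^{s-1}(\mathbb{R})))$ by Lemma \ref{L:MP0}(ii), iterating Lemma \ref{Bdiff} gives all higher derivatives as sums of bounded operators in the appropriate multilinear spaces.

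More concretely, I would prove the following statement by induction on $j\in\N$: for every $k,n,m\in\N$, the map $B^k_{n,m}$ belongs to ${\rm C}^j(H^s(\mathbb{R}),\kL^k_{\rm sym}(H^s(\mathbb{R}),\kL(H^{s-1}(\mathbb{R}))))$. The base case $j=0$ is immediate from Lemma \ref{L:MP0}(ii), whose Lipschitz conclusion for $B_{n+k,m}$ translates directly into continuity of $B^k_{n,m}$ via the definition of the latter. For the inductive step, assume the claim holds for $j$ and all triples $(k,n,m)$. Fix $(k,n,m)$ and apply Lemma \ref{Bdiff}, which gives
\[
\partial B^k_{n,m}(f) = n\,B^{k+1}_{n-1,m}(f) - 2m\,B^{k+1}_{n+1,m+1}(f)
\]
(with the convention $nB^{k+1}_{n-1,m}:=0$ when $n=0$, coherent with the definition). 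Each summand on the right is, by the induction hypothesis applied with the incremented order $k+1$, a ${\rm C}^j$-map from $H^s(\mathbb{R})$ into $\kL^{k+1}_{\rm sym}(H^s(\mathbb{R}),\kL(H^{s-1}(\mathbb{R})))$. Therefore $\partial B^k_{n,m}\in{\rm C}^j$, which means $B^k_{n,m}\in{\rm C}^{j+1}$, completing the induction.

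Since the induction holds for every $j$, we obtain $B^k_{n,m}\in{\rm C}^\infty$. I do not expect any serious obstacle: the potential concern would be verifying that Lemma \ref{Bdiff} is legitimately a Fr\'echet (not merely G\^ateaux) derivative statement, but that is already asserted in that lemma and follows in turn from the local Lipschitz bounds of Lemma \ref{L:MP0}(ii). The only notational care needed is to keep track of the symmetry: the derivative of a $k$-linear symmetric operator should be $(k+1)$-linear symmetric, and one should check that the right-hand side of the formula in Lemma \ref{Bdiff} is indeed symmetric in all $k+1$ arguments $f_1,\ldots,f_k,g$, which is built into the definition of the $B_{n,m}$'s (the slots for $b_1,\ldots,b_n$ are interchangeable).
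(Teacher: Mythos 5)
Your proof is correct and follows exactly the route the paper intends: the paper itself only remarks that the corollary is a straightforward consequence of Lemma~\ref{Bdiff}, and your induction on the order of differentiation, with the base case handled by the Lipschitz continuity from Lemma~\ref{L:MP0}(ii) and the inductive step using the closedness of the family $\{B^k_{n,m}\}$ under Fr\'echet differentiation, is precisely the argument being alluded to.
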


\begin{proof}[Proof of Lemma~{\rm\ref{Bdiff}}]
Given $f,\, g,\, f_1,\ldots,\, f_k\in H^s(\mathbb{R}),$ we consider the remainder 
\begin{align*}
    R(f,g)[f_1,\dots,f_k]
     &:=\big(B_{n,m}^k(f+g)-B^k_{n,m}(f)\big)[\fks]\\
     &\hspace{0,55cm}-nB^{k+1}_{n-1,m}(f)[\fks,g]+2m B^{k+1}_{n+1,m+1}(f)[\fks,g]
\end{align*}
and represent it (by elementary algebraic operations)
as
\begin{align*}
    &\hspace{-0.5cm}R(f,g)[\fks]\\
    =&\sum_{j=0}^{n-2}(n-j-1)B_{n+k,m}(\fgs)[\underbrace{\fgs}_{j},\underbrace{\fs}_{n-j-2},\fks,g,g,\cdot]\\
    &-\sum_{l=0}^{m-1}B_{n+k+2,m+1}(\underbrace{\fgs}_{m-l},\underbrace{\fs}_{l+1})
    [\fs,n(2f+g)+f,\fks,g,g,\cdot]\\
&+2\sum_{l=0}^{m-1}\sum_{j=0}^{m-l-1}
B_{n+k+4,m+2}(\underbrace{\fgs}_{m-l-j}
\underbrace{\fs}_{l+j+2})
[\fs,2f+g,\fks,g,g,\cdot].
\end{align*}
Sums with negative upper summation limit are to be neglected.
For $\|g\|_{H^s}\leq 1$, we obtain from Lemma~\ref{L:MP0}~(ii) 
\[\|R(f,g)[\fks]\|_{\kL(H^{s-1}(\mathbb{R}))}
\leq C\|g\|_{H^s}^2\prod_{i=1}^k\|f_i\|_{H^s}.\]
This implies the result.
\end{proof}

 \section{Some auxiliary results related to localization}\label{Sec:B}

The following commutator estimate is used in the proof  of  Lemmas~\ref{L:B4} and~\ref{L:B5} below.

\begin{lemma}\label{L:B0} 
Let $n,\, m \in \N$,   $s\in(3/2, 2)$, $f\in H^s(\mathbb{R})$, and  ${\varphi\in {\rm C}^1(\mathbb{R})}$ with uniformly continuous derivative $\varphi'$ be given. 
Then, there exist  a constant $K$ that depends only on $ n, $ $m, $ $\|\varphi'\|_\infty, $ and $\|f\|_{H^s}$  such that 
 \begin{equation}\label{LB2}
  \|\varphi B_{n,m}^0(f)[h]- B_{n,m}^0(f)[\varphi h]\|_{H^{1}}\leq K\| h\|_{2}
 \end{equation}
for all   $h\in L_2(\mathbb{R})$.
\end{lemma}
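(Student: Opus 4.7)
The starting point is to combine the two defining principal-value integrals into a single integral formula for the commutator, where the $1/\eta$ singularity of the $B^0_{n,m}(f)$-kernel is tamed by the Lipschitz cancellation of $\varphi$:
\begin{equation*}
D(\xi) := \varphi(\xi)\, B^0_{n,m}(f)[h](\xi) - B^0_{n,m}(f)[\varphi h](\xi)
= \mathrm{PV}\!\int_{\mathbb{R}} \frac{\delta_{[\xi,\eta]}\varphi}{\eta}\cdot K_0(\xi,\eta)\cdot h(\xi-\eta)\, d\eta,
\end{equation*}
with $K_0(\xi,\eta) := (\delta_{[\xi,\eta]}f/\eta)^n/[1+(\delta_{[\xi,\eta]}f/\eta)^2]^m$. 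Crucially, $|\delta_{[\xi,\eta]}\varphi/\eta|\leq\|\varphi'\|_\infty$ uniformly, so the commutator kernel is a Calder\'on-Zygmund-type kernel of one order higher smoothness than $B^0_{n,m}(f)$ itself.

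For the $L_2$-estimate $\|D\|_{L_2}\leq K\|h\|_{L_2}$, I would first treat the case of bounded $\varphi$, where each of the two terms in $D$ is separately in $L_2$ by Lemma~\ref{L:MP0}(i) applied to $B^0_{n,m}(f)$. To remove the dependence on $\|\varphi\|_\infty$ and keep only $\|\varphi'\|_\infty$, I would use that $D$ is invariant under adding constants to $\varphi$, combined with a standard Coifman-Rochberg-Weiss-type commutator argument for the CZ operator $B^0_{n,m}(f)$; an approximation $\varphi\leadsto (\varphi-\varphi(0))\chi_R$ and passing to the limit $R\to\infty$ makes this rigorous.

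The core of the lemma is the gain-of-one-derivative: $\|D'\|_{L_2}\leq K\|h\|_{L_2}$. Writing $D$ in the $s$-variable $s=\xi-\eta$ and differentiating formally under the integral gives
\begin{equation*}
D'(\xi) = \varphi'(\xi)\, B^0_{n,m}(f)[h](\xi) + \mathrm{PV}\!\int_{\mathbb{R}} \partial_\xi K(\xi,s)\, [\varphi(\xi)-\varphi(s)]\, h(s)\, ds,
\end{equation*}
where $K(\xi,s)=K_0(\xi,\xi-s)/(\xi-s)$ is the kernel of $B^0_{n,m}(f)$. The first term is immediately controlled via Lemma~\ref{L:MP0}(i) by $\|\varphi'\|_\infty\cdot C(\|f\|_{H^s})\|h\|_{L_2}$.

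The decisive computation for the remaining term rests on the identity
\begin{equation*}
\partial_\xi K(\xi,s) = -\frac{K_0(\xi,\eta)}{\eta^2} + \frac{(\partial_\xi+\partial_\eta)K_0(\xi,\eta)}{\eta}, \qquad \eta=\xi-s,
\end{equation*}
together with the key cancellation (obtained by direct calculation, using $u:=\delta f/\eta$)
\begin{equation*}
(\partial_\xi+\partial_\eta) u = \frac{f'(\xi)-u}{\eta},\qquad
(\partial_\xi+\partial_\eta) K_0 = \frac{nu^{n-1}-(2m-n)u^{n+1}}{(1+u^2)^{m+1}}\cdot\frac{f'(\xi)-u}{\eta}.
\end{equation*}
Substituting, and noting $\varphi(\xi)-\varphi(s)=\eta\cdot\delta_{[\xi,\eta]}\varphi/\eta$, the extra $1/\eta$ lost in $\partial_\xi K$ is recovered; expanding the factor $f'(\xi)-u$ as $f'(\xi)\cdot 1-u$ and writing $u^k=(\delta f/\eta)^k$, the whole integrand decomposes into a finite sum of kernels of the form
\begin{equation*}
\frac{(\delta f/\eta)^{k}(\delta\varphi/\eta)}{[1+(\delta f/\eta)^2]^{m'}}\cdot \frac{h(\xi-\eta)}{\eta},\qquad m'\in\{m,m{+}1\},
\end{equation*}
each optionally prefactored by $f'(\xi)\in L_\infty$. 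These are precisely $B_{k+1,m'}(f,\ldots,f)[f,\ldots,f,\varphi,h]$, to which Lemma~\ref{L:MP0}(i) applies, giving $L_2$-norm bounded by $C(\|f\|_{H^s})\|\varphi'\|_\infty\|h\|_{L_2}$.

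\textbf{Main obstacle.} The computational bookkeeping of the decomposition of $\partial_\xi K\cdot (\varphi(\xi)-\varphi(s))$ into $B$-operators is delicate but mechanical; the genuine difficulty lies in justifying the formal differentiation under the PV sign when $h$ is merely $L_2$ and in ensuring that constants depend only on $\|\varphi'\|_\infty$, not $\|\varphi\|_\infty$. I expect this to be handled by first assuming $h\in C^\infty_c$ (so that all manipulations are rigorous) and $\varphi\in C^1_b$ with $\|\varphi'\|_\infty$ fixed, deriving the uniform estimate $\|D\|_{H^1}\leq K\|h\|_{L_2}$, and then extending by density/approximation using that $K$ is independent of $\|\varphi\|_\infty$ by the constant-invariance of $D$.
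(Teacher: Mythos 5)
The paper does not give a proof of Lemma \ref{L:B0} at all; it simply cites \cite[Lemma~4.4]{AbMa20x}. So there is no in-paper argument against which to compare. Judged on its own merits, your plan captures the essential ideas for the hard part (the $L_2$-estimate of the derivative): combining the two PV integrals into a single kernel, differentiating in $\xi$, the cancellation identity $(\partial_\xi+\partial_\eta)u=(f'(\xi)-u)/\eta$ with $u=\delta_{[\xi,\eta]}f/\eta$, and the decomposition of the resulting integrand into a finite sum of $B_{k,m'}(f,\ldots,f)[f,\ldots,f,\varphi,h]$ operators, each of which is $L_2$-bounded by Lemma~\ref{L:MP0}(i) with the right constant. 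The density argument to justify the formal differentiation is standard and should close the gap you flag.

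There is, however, one genuine flaw in the plan: the treatment of the $L_2$-bound of $D$ itself. You propose to first assume $\varphi$ bounded, then use constant-invariance of $D$ plus a ``Coifman--Rochberg--Weiss-type commutator argument''. But CRW controls $[b,T]$ for $b\in\mathrm{BMO}$, and a function with bounded derivative on $\mathbb{R}$ is generally \emph{not} in $\mathrm{BMO}$ (e.g.\ $\varphi(\xi)=\xi$), so CRW does not apply and this route does not produce a bound in terms of $\|\varphi'\|_\infty$. The repair is immediate and in fact contained in your own later paragraph: the combined kernel shows that
\[
\varphi\,B^0_{n,m}(f)[h]-B^0_{n,m}(f)[\varphi h]=B_{n+1,m}(f,\ldots,f)[f,\ldots,f,\varphi,h],
\]
i.e.\ $D$ is \emph{itself} one of the operators covered by Lemma~\ref{L:MP0}(i), which already yields
\[
\|D\|_{L_2}\leq C(n,m,\|f'\|_\infty)\,\|f'\|_\infty^{\,n}\,\|\varphi'\|_\infty\,\|h\|_{L_2},
\]
with no detour through commutator theory and no dependence on $\|\varphi\|_\infty$. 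You should use this observation for the zero-order estimate as well, not just for the pieces of $D'$. With that correction your argument is in order, and it is very likely essentially the argument carried out in the cited reference.
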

\begin{proof}
See \cite[Lemma~4.4]{AbMa20x}.
\end{proof}
 
 The next four lemmas  are used in the proof of Proposition~\ref{T:AP}. We recall the definition of an $\eps$-localization family from Section \ref{Sec:3}.

\begin{lemma}\label{L:B1} 
Let $n,\, m \in \N$, $3/2<s'<s<2$, and  $\nu\in(0,\infty)$ be given. 
Let further  $f\in H^s(\mathbb{R})$ and  $\oo\in \{1\}\cup H^{s-1}(\mathbb{R})$.
For any sufficiently small $\e\in(0,1)$, there is
a constant $K=K(\e, n, m, \|f\|_{H^s},\|\oo\|_{H^{s-1}})$  such that 
 \begin{equation*} 
  \Big\|\pi_j^\e\oo B_{n,m}^0(f)[ h]-\frac{\oo(\xi_j^\e)(f'(\xi_j^\e))^n}{[1+(f'(\xi_j^\e))^2]^m}B_{0,0}[\pi_j^\e h]\Big\|_{H^{s-1}}\leq \nu \|\pi_j^\e h\|_{H^{s-1}}+K\| h\|_{H^{s'-1}} 
 \end{equation*}
for all $|j|\leq N-1$ and  $h\in H^{s-1}(\mathbb{R})$.
\end{lemma}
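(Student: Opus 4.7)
The plan is to ``freeze the coefficients'' of the operator $\oo B_{n,m}^0(f)$ near the point $\xi_j^\e$: on $\supp \pi_j^\e$, an interval of diameter $\e$, both $\oo$ and $f'$ oscillate by at most $O(\e^{s-3/2})$ via the embedding $H^{s-1}(\R)\hookrightarrow {\rm C}^{s-3/2}(\R)$. This smallness is what delivers the factor $\nu$ in the target estimate once $\e$ is chosen small enough, while all remaining terms (losing a $\e$-dependent constant) should be absorbed into $K\|h\|_{H^{s'-1}}$ by interpolation between $H^{s-1}$ and $H^{s'-1}$.

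As a first step I would reduce the argument $h$ to $\pi_j^\e h$. Write $h=\chi_j^\e h + (1-\chi_j^\e) h$; the contribution $\pi_j^\e\oo B_{n,m}^0(f)[(1-\chi_j^\e)h]$ is non-singular, since when the outer variable $\xi$ lies in $\supp \pi_j^\e\subset\{\chi_j^\e=1\}$ and $h(\xi-\eta)(1-\chi_j^\e(\xi-\eta))\neq 0$ one has $|\eta|\geq\e$. A direct integration estimate then yields a bound by $K\|h\|_{H^{s'-1}}$. For the remaining piece $\pi_j^\e\oo B_{n,m}^0(f)[\chi_j^\e h]$, I use $\chi_j^\e\pi_j^\e=\pi_j^\e$ together with the commutator estimate from Lemma~\ref{L:B0} to move $\pi_j^\e$ inside: the commutator is controlled in $H^1$ by the $L_2$-norm of the argument, and interpolation against the $H^{s-1}$ bound from Lemma~\ref{L:MP0}(ii) produces a lower-order error of type $K\|h\|_{H^{s'-1}}$. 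This reduces the problem to estimating $\pi_j^\e \oo B_{n,m}^0(f)[\pi_j^\e h]$.

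Next I would freeze $\oo$ at $\xi_j^\e$. The identity $\pi_j^\e\oo = \oo(\xi_j^\e)\pi_j^\e + \pi_j^\e(\oo-\oo(\xi_j^\e))$ and the product estimate \eqref{MES}, combined with the Hölder bound $\|\chi_j^\e(\oo-\oo(\xi_j^\e))\|_\infty \leq C\e^{s-3/2}\|\oo\|_{H^{s-1}}$, give an error controlled by $C\e^{s-3/2}\|B_{n,m}^0(f)[\pi_j^\e h]\|_{H^{s-1}} \leq C\e^{s-3/2}\|\pi_j^\e h\|_{H^{s-1}}$, absorbable into $\nu\|\pi_j^\e h\|_{H^{s-1}}$. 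It remains to replace the kernel of $B_{n,m}^0(f)$ by its constant value at $\xi_j^\e$: writing $\delta_{[\xi,\eta]}f/\eta = \int_0^1 f'(\xi-t\eta)\,dt$ and observing that the integrand in $B_{n,m}^0(f)[\pi_j^\e h](\xi)$ is supported on $|\eta|\leq\e$ (since both $\xi$ and $\xi-\eta$ must lie in $\supp\pi_j^\e$), Hölder continuity of $f'$ gives
\[
\Big|\frac{(\delta_{[\xi,\eta]}f/\eta)^n}{[1+(\delta_{[\xi,\eta]}f/\eta)^2]^m} - \frac{(f'(\xi_j^\e))^n}{[1+(f'(\xi_j^\e))^2]^m}\Big|\leq C\e^{s-3/2}
\]
uniformly in the relevant range, yielding another error $\leq \nu\|\pi_j^\e h\|_{H^{s-1}}$ once $\e$ is small. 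What survives is precisely $\oo(\xi_j^\e)\cdot (f'(\xi_j^\e))^n/[1+(f'(\xi_j^\e))^2]^m \cdot B_{0,0}[\pi_j^\e h]$.

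\textbf{Main obstacle.} The delicate point is to control the coefficient-freezing errors in $H^{s-1}$ (not merely in $L_\infty$ or $L_2$), because the fractional norm feels the smoothness of the \emph{deviation} $\oo-\oo(\xi_j^\e)$ as well as its size. The commutator estimate of Lemma~\ref{L:B0} (which gains one full derivative at the cost of the $L_2$-norm) is essential to shift the cutoff inside the singular integral without losing regularity; combined with the multiplier estimate \eqref{MES} and with $H^{s-1}\hookrightarrow {\rm C}^{s-3/2}$, it reduces the $H^{s-1}$-error bookkeeping to pointwise Hölder estimates of oscillations of scale $\e^{s-3/2}$. A careful execution is required so that every remainder either carries such an $\e^{s-3/2}$ factor (contributing to $\nu\|\pi_j^\e f\|_{H^s}$) or depends on $h$ only through the weaker norm $\|h\|_{H^{s'-1}}$ with $s'<s$.
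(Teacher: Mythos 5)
Your localization strategy (splitting $h=\chi_j^\e h + (1-\chi_j^\e)h$, disposing of the far part, commuting $\pi_j^\e$ inside via Lemma~\ref{L:B0}, freezing $\oo$ by \eqref{MES}, and finally freezing the kernel at $\xi_j^\e$) is the right framework and the first steps are sound. However, the decisive step --- replacing the operator $B_{n,m}^0(f)$ by the constant multiple of $B_{0,0}$ --- rests on an invalid inference. You establish the pointwise bound
\[
\Big|\frac{(\delta_{[\xi,\eta]}f/\eta)^n}{[1+(\delta_{[\xi,\eta]}f/\eta)^2]^m}-\frac{(f'(\xi_j^\e))^n}{[1+(f'(\xi_j^\e))^2]^m}\Big|\leq C\e^{s-3/2}
\]
for $\xi,\xi-\eta\in\supp\pi_j^\e$, and conclude directly that the error operator has $H^{s-1}\to H^{s-1}$ norm at most $\nu$. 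A pointwise bound on a kernel of the form $K(\xi,\eta)/\eta$ does not by itself control the operator norm of the associated singular integral, even on $L_2$: smallness of $\|K\|_\infty$ controls only the size, not the cancellation or regularity constants that enter any Calder\'on--Zygmund or $T1$-type criterion, and for $f\in H^s$ with $s<2$ the $\eta$-regularity of the kernel is merely H\"older of order $s-3/2$ with an $O(1)$ constant. The way this gap is actually closed in the framework of this paper is algebraic: telescope the kernel difference so that each resulting term carries a factor $\delta_{[\xi,\eta]}(f-f'(\xi_j^\e)\,\mathrm{id})/\eta$, recognize each such term as a $B_{n',m'}$-type operator with $f-f'(\xi_j^\e)\,\mathrm{id}$ (suitably cut off by $\chi_j^\e$) occupying one of the multilinear slots, and then invoke Lemma~\ref{L:MP0}(i), whose $\kL(L_2)$ bound is proportional to $\prod\|b_i'\|_\infty$ and hence picks up the required $\e^{s-3/2}$ factor, together with interpolation to pass from $L_2$ to $H^{s-1}$. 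That structural use of the multilinear estimates is exactly what the pointwise-kernel argument skips over.

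A secondary point: after your reductions, what ``survives'' is $\oo(\xi_j^\e)\,c\,\pi_j^\e B_{0,0}[\pi_j^\e h]$ with the outer cutoff $\pi_j^\e$ attached, whereas the target has $\oo(\xi_j^\e)\,c\,B_{0,0}[\pi_j^\e h]$ without it. The mismatch $(1-\pi_j^\e)B_{0,0}[\pi_j^\e h]$ is not negligible in $H^{s-1}$ and must be accounted for; since $B_{0,0}=B_{0,0}^0(f)$, one further application of the commutator estimate of Lemma~\ref{L:B0} disposes of it, but this step should be made explicit rather than absorbed silently.
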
  
\begin{proof}
See \cite[Lemma~4.5]{AbMa20x}.
\end{proof}

The next two lemmas are the analogues of Lemma \ref{L:B1}  corresponding to the case $j=N$.

\begin{lemma}\label{L:B2} 
Let $n,\, m \in \N$,  $3/2<s'<s<2$, and  $\nu\in(0,\infty)$ be given. 
Let further  $f\in H^s(\mathbb{R})$ and  $\oo\in  H^{s-1}(\mathbb{R})$.
For any sufficiently small  $\e\in(0,1)$, there is
a constant $K=K(\e, n, m, \|f\|_{H^s},\|\oo\|_{H^{s-1}})$  such that 
  \begin{equation*}
  \|\pi_N^\e\oo B_{n,m}^0(f)[h]\|_{H^{s-1}}\leq \nu \|\pi_N^\e h\|_{H^{s-1}}+K\| h\|_{H^{s'-1}}
 \end{equation*} 
 for $h\in H^{s-1}(\mathbb{R})$.
\end{lemma}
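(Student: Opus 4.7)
The plan is to exploit that, unlike in Lemma \ref{L:B1}, the multiplier $\oo\in H^{s-1}(\mathbb{R})$ itself furnishes the required smallness. Since $s-1>1/2$, Sobolev embedding combined with $\oo\in L_2(\mathbb{R})$ gives $\oo\in C_0(\mathbb{R})$, whence
$$\|\chi_N^\e\oo\|_\infty\le\sup_{|\xi|\ge 1/\e-\e}|\oo(\xi)|\to 0\quad\text{as}\quad\e\to 0^+.$$
Consequently no Fourier-multiplier approximation is needed at $j=N$: it suffices to bring this smallness out of $\oo$, while shunting every other factor into the ``lower order'' norm $\|h\|_{H^{s'-1}}$.

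First I use $\chi_N^\e\pi_N^\e=\pi_N^\e$ to rewrite $\pi_N^\e\oo B_{n,m}^0(f)[h]=(\chi_N^\e\oo)\cdot\pi_N^\e B_{n,m}^0(f)[h]$ and apply \eqref{MES} to split
\begin{align*}
\|\pi_N^\e\oo B_{n,m}^0(f)[h]\|_{H^{s-1}}&\le 2\|\chi_N^\e\oo\|_\infty\|\pi_N^\e B_{n,m}^0(f)[h]\|_{H^{s-1}}\\
&\quad+2\|\pi_N^\e B_{n,m}^0(f)[h]\|_\infty\|\chi_N^\e\oo\|_{H^{s-1}}.
\end{align*}
This reduces the problem to $H^{s-1}$ and $L_\infty$ bounds for $\pi_N^\e B_{n,m}^0(f)[h]$.

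For the $H^{s-1}$ part, I commute $\pi_N^\e$ past $B_{n,m}^0(f)$ via Lemma \ref{L:B0}, absorbing the commutator into the embedding $H^1\hookrightarrow H^{s-1}$ (valid since $s\le 2$), and then apply Lemma \ref{L:MP0}(ii) to the main piece $B_{n,m}^0(f)[\pi_N^\e h]$. Together with $H^{s'-1}\hookrightarrow L_2$ this yields
$$\|\pi_N^\e B_{n,m}^0(f)[h]\|_{H^{s-1}}\le C(\|f\|_{H^s})\|\pi_N^\e h\|_{H^{s-1}}+K(\e)\|h\|_{H^{s'-1}},$$
and the prefactor $2C\|\chi_N^\e\oo\|_\infty$ is forced $\le\nu$ by shrinking $\e$. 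For the $L_\infty$ part I re-apply Lemma \ref{L:MP0}(ii), but at the regularity $s'\in(3/2,2)$, and use $H^{s'-1}\hookrightarrow L_\infty$ to get $\|\pi_N^\e B_{n,m}^0(f)[h]\|_\infty\le C(\|f\|_{H^s})\|h\|_{H^{s'-1}}$; combined with the trivial bound $\|\chi_N^\e\oo\|_{H^{s-1}}\le\|\oo\|_{H^{s-1}}$, this contributes the remaining $K\|h\|_{H^{s'-1}}$ term.

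The main difficulty I anticipate is the bookkeeping: every factor carrying $\oo$ only through its $H^{s-1}$-norm (rather than through the small $\|\chi_N^\e\oo\|_\infty$) must land next to a quantity controlled by $\|h\|_{H^{s'-1}}$ alone. This is what forces the drop from $s$ to $s'$ in the $L_\infty$ estimate for $B_{n,m}^0(f)[h]$: the gap $s-s'>0$ provides precisely the slack that makes $H^{s'-1}\hookrightarrow L_\infty$ available without returning to $\|h\|_{H^{s-1}}$.
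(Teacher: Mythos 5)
Your proof is essentially correct and follows the same route that the paper uses for the parallel localization lemmas it does prove in full (Lemmas \ref{L:B4} and \ref{L:B5}): factor out $\chi_N^\e\oo$ using $\chi_N^\e\pi_N^\e=\pi_N^\e$, invoke \eqref{MES}, commute $\pi_N^\e$ past $B^0_{n,m}(f)$ with the commutator bound of Lemma \ref{L:B0}, and exploit that $\oo\in H^{s-1}(\mathbb{R})$ vanishes at infinity so $\|\chi_N^\e\oo\|_\infty$ can be made small. The one inaccuracy is the assertion that $\|\chi_N^\e\oo\|_{H^{s-1}}\le\|\oo\|_{H^{s-1}}$ is ``trivial'': multiplication by the cutoff $\chi_N^\e$, whose derivative scales like $\e^{-1}$, is not norm-nonincreasing on $H^{s-1}$, only bounded with an $\e$-dependent constant, e.g. $\|\chi_N^\e\oo\|_{H^{s-1}}\le C\,\|\chi_N^\e\|_{W^1_\infty}\|\oo\|_{H^{s-1}}\le C\,\e^{-1}\|\oo\|_{H^{s-1}}$. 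Since the constant $K$ in the lemma is explicitly allowed to depend on $\e$, this does not damage the argument, but the step should be stated correctly so as not to suggest a uniform-in-$\e$ bound which you do not have and do not need.
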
  
\begin{proof}
See \cite[Lemma~4.6]{AbMa20x}.
\end{proof}

Lemma \ref{L:B3} is the counterpart of Lemma \ref{L:B2} in the case when $\oo=1$.

\begin{lemma}\label{L:B3} 
Let $n,\, m \in \N$,  $3/2<s'<s<2$, and  $\nu\in(0,\infty)$ be given. 
Let further  ${f\in H^s(\mathbb{R})}$.
For any sufficiently small  $\e\in(0,1)$, there is
a constant $K=K(\e, n, m, \|f\|_{H^s})$  such that 
 \begin{equation*} 
  \|\pi_N^\e B_{0,m}^0(f)[ h]-B_{0,0}[\pi_N^\e h]\|_{H^{s-1}}\leq \nu \|\pi_N^\e h\|_{H^{s-1}}+K\| h\|_{H^{s'-1}}
 \end{equation*}
 and 
  \begin{equation*}
  \|\pi_N^\e B_{n,m}^0(f)[h]\|_{H^{s-1}}\leq \nu \|\pi_N^\e h\|_{H^{s-1}}+K\| h\|_{H^{s'-1}},\qquad n\geq 1,
 \end{equation*}
 for  all $h\in H^{s-1}(\mathbb{R})$.
\end{lemma}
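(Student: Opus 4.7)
The lemma is the $j=N$ analogue of Lemma~\ref{L:B1}: localizing by $\pi_N^\e$ to the far field $|\xi|\geq 1/\e$ effectively "freezes" the coefficient $f'(\xi)$ at its limit at infinity, which is zero since $f'\in H^{s-1}(\mathbb{R})\hookrightarrow C_0(\mathbb{R})$ (by $s-1>1/2$). Substituting $f'=0$ into the Fourier-multiplier formula of Lemma~\ref{L:B1} yields precisely the two limit operators in the asserted estimates: $B_{0,0}$ when $n=0$, and the zero operator when $n\geq 1$.

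The plan is first to reduce the first estimate to the second. I would write
\[
\pi_N^\e B_{0,m}^0(f)[h]-B_{0,0}[\pi_N^\e h]=\pi_N^\e\bigl(B_{0,m}^0(f)-B_{0,0}\bigr)[h]+[\pi_N^\e,B_{0,0}][h],
\]
bound the commutator via Lemma~\ref{L:B0} (with $f\equiv 0$, $n=m=0$, $\varphi=\pi_N^\e$), obtaining $\|[\pi_N^\e,B_{0,0}][h]\|_{H^1}\leq K\|h\|_{L_2}\leq K\|h\|_{H^{s'-1}}$, and then apply the telescoping identity
\[
\frac{1}{\prod_{i=1}^m(1+x_i)}-1=-\sum_{k=1}^m\frac{x_k}{(1+x_k)\prod_{i=k+1}^m(1+x_i)}
\]
with $x_i:=(\delta_{[\xi,\eta]}f/\eta)^2$ to express $B_{0,m}^0(f)-B_{0,0}$ as a finite linear combination of operators of the form $B_{2,m_k}^0(f)$ with $m_k\in\{1,\ldots,m\}$. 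Each such summand falls under the case $n=2\geq 1$ of the second estimate, which thus yields the first.

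For the second estimate (with $n\geq 1$), the proof would parallel those of Lemmas~\ref{L:B1} and~\ref{L:B2} (established in \cite{AbMa20x}), relying on the freezing-of-coefficients technique. The crucial input is the uniform smallness of the quotient $\delta_{[\xi,\eta]}f/\eta$ on $\supp\pi_N^\e\times\mathbb{R}$: for $\xi\in\supp\pi_N^\e$, if $|\eta|<M$ then $\xi-t\eta$ lies in the far field where $f'\in C_0(\mathbb{R})$ is small (for fixed $M$ and small $\e$); if $|\eta|\geq M$ then $|\delta f/\eta|\leq 2\|f\|_\infty/M$ is small (for $M$ large, using $f\in H^s\hookrightarrow C_0$). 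Consequently, on $\supp\pi_N^\e$ the multiplier $(\delta f/\eta)^n$ appearing in the kernel of $B_{n,m}^0(f)$ contributes a factor that can be made arbitrarily small, while the denominator $\prod[1+(\delta f/\eta)^2]^m$ remains bounded above by $1$. I would then extract this pointwise smallness as an $H^{s-1}$ operator-norm bound by adapting the localization strategy underlying Lemmas~\ref{L:B1} and~\ref{L:B2}, using the commutator estimate of Lemma~\ref{L:B0} to swap $\pi_N^\e$ with $B_{n,m}^0(f)$ modulo lower-order errors.

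The principal obstacle will be turning the uniform pointwise smallness of the kernel multiplier $(\delta f/\eta)^n$ on $\supp\pi_N^\e\times\mathbb{R}$ into a small operator-norm bound in $\kL(H^{s-1}(\mathbb{R}))$. This mirrors the analogous step in Lemma~\ref{L:B2}, where the smallness is carried by an $L^\infty$-vanishing factor $\omega\in H^{s-1}$; here it must instead be carried by the $n$-fold product of small kernel factors, which couples nontrivially with the $\xi$-$\eta$ dependence of the singular integral. The adaptation requires a careful interplay of this product smallness with the Sobolev-norm machinery — commutator estimates, the product rule \eqref{MES}, and the Sobolev embedding $H^{s-1}\hookrightarrow L^\infty$ — mirroring the arguments in \cite[Lemmas 4.5, 4.6]{AbMa20x}.
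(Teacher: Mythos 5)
The paper's own proof is a bare citation to \cite[Lemma~4.7]{AbMa20x}, so there is no detailed in-paper argument to compare against. Your reduction of the first estimate to the second is correct: with $x_i=(\delta_{[\xi,\eta]}f/\eta)^2$ the telescoping identity gives $B_{0,m}^0(f)-B_{0,0}=-\sum_{l=1}^m B_{2,l}^0(f)$, and the commutator $\pi_N^\e B_{0,0}[h]-B_{0,0}[\pi_N^\e h]$ is controlled by Lemma~\ref{L:B0} (with $n=m=0$, $\varphi=\pi_N^\e$), landing in the $K\|h\|_{H^{s'-1}}$ term since $\|\cdot\|_{H^{s-1}}\leq C\|\cdot\|_{H^1}$ and $\|h\|_2\leq\|h\|_{H^{s'-1}}$. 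You also identify the right mechanism for $n\geq1$: $f'\in H^{s-1}(\R)$ vanishes at infinity and, after a near/far split in $\eta$, the factor $\delta_{[\xi,\eta]}f/\eta$ is uniformly small wherever $\pi_N^\e$ concentrates the integrand.

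However, the actual content of the lemma for $n\geq1$ is exactly the step you flag as the ``principal obstacle'' and then defer. Pointwise smallness of a kernel factor inside a singular integral does not by itself yield a small $\kL(H^{s-1})$-operator bound, and the phrase ``adapting the localization strategy'' is where the proof should go but does not. One workable execution: commute $\pi_N^\e$ inside via Lemma~\ref{L:B0} (cost $K\|h\|_2$), then split $f=\chi_R f+(1-\chi_R)f$ in each $b_i$-slot of $B_{n,m}(f,\ldots,f)[f,\ldots,f,\pi_N^\e h]$ using multilinearity, with $\chi_R$ a cutoff supported in $[-R,R]$. Any term with $(1-\chi_R)f$ in some $b_i$-slot is bounded by $C\|(1-\chi_R)f\|_{H^s}\|f\|_{H^s}^{n-1}\|\pi_N^\e h\|_{H^{s-1}}$ via Lemma~\ref{L:MP0}~(ii), and $\|(1-\chi_R)f\|_{H^s}\to0$ as $R\to\infty$; the remaining term with all $b_i=\chi_R f$ is, for $\e<1/(2R)$, supported only where $|\xi|\leq R$ and $|\xi-\eta|\geq1/\e$, hence $|\eta|\geq1/\e-R$, so the kernel is non-singular and the term is lower order. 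Since your proposal stops short of any such argument, it is a sound plan consistent in spirit with \cite{AbMa20x}, but not yet a proof.
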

\begin{proof}
See \cite[Lemma~4.7]{AbMa20x}.
\end{proof}

We now prove a lemma which deals with a similar situation as in  Lemma~\ref{L:B1}.

\begin{lemma}\label{L:B4} 
Let $n,\, m \in \N$, $3/2<s'<s<2$, and  $\nu\in(0,\infty)$ be given. 
Let further  $f\in H^s(\mathbb{R})$, $a\in  H^{s-1}(\mathbb{R})$, and $\oo\in \{1\}\cup H^{s-1}(\mathbb{R})$.
For any sufficiently small $\e\in(0,1)$, there is
a constant $K$ depending on $\e,$ $ n,$ $ m,$ $ \|f\|_{H^s},$  $\|a\|_{H^{s-1}},$  and $ \|\oo\|_{H^{s-1}}$ (if $\oo\neq1$)  such that 
 \begin{equation*} 
\Big\|\pi_j^\e\oo B_{n,m}^0(f)[ ah]-\frac{a(\xi_j^\e)\oo(\xi_j^\e)(f'(\xi_j^\e))^n}{[1+(f'(\xi_j^\e))^2]^m}B_{0,0}[\pi_j^\e h]\Big\|_{H^{s-1}}\leq \nu \|\pi_j^\e h\|_{H^{s-1}}+K\| h\|_{H^{s'-1}}
 \end{equation*}
for all $|j|\leq N-1$ and  $h\in H^{s-1}(\mathbb{R})$.
\end{lemma}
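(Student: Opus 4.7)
The strategy is to deduce this from Lemma~\ref{L:B1} by applying that result with $h$ replaced by the product $ah$, and then reconciling the resulting localized output $B_{0,0}[\pi_j^\e(ah)]$ with the target $a(\xi_j^\e)B_{0,0}[\pi_j^\e h]$ via a freezing-of-coefficients argument that exploits the H\"older smallness of $a-a(\xi_j^\e)$ on $\supp\pi_j^\e$.

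I would first note that $ah\in H^{s-1}(\R)$ with $\|ah\|_{H^{s-1}}\leq C\|a\|_{H^{s-1}}\|h\|_{H^{s-1}}$ by \eqref{MES} together with the Sobolev embedding $H^{s-1}(\R)\hookrightarrow L^\infty(\R)$ (valid since $s-1>1/2$). Invoking Lemma~\ref{L:B1} with input $ah$ and localization parameter $\nu_1$ (to be chosen later in terms of $\nu$ and $\|a\|_\infty$) yields
\[
\Big\|\pi_j^\e\oo B_{n,m}^0(f)[ah] - C_j B_{0,0}[\pi_j^\e(ah)]\Big\|_{H^{s-1}} \leq \nu_1 \|\pi_j^\e(ah)\|_{H^{s-1}} + K_1\|ah\|_{H^{s'-1}},
\]
with $C_j:=\oo(\xi_j^\e)(f'(\xi_j^\e))^n/[1+(f'(\xi_j^\e))^2]^m$. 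The residual $\|ah\|_{H^{s'-1}}$ is absorbed into $K\|h\|_{H^{s'-1}}$ via \eqref{MES} at regularity $s'-1$.

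The key step is then the decomposition $\pi_j^\e(ah) = a\,\pi_j^\e h = a(\xi_j^\e)\pi_j^\e h + (a-a(\xi_j^\e))\pi_j^\e h$, where the error factor $(a-a(\xi_j^\e))\pi_j^\e h$ is supported inside $\supp\pi_j^\e$. Since $s-1>1/2$, the Sobolev embedding $H^{s-1}(\R)\hookrightarrow C^{s-3/2}(\R)$ gives
\[
\sup_{\xi\in\supp\pi_j^\e}|a(\xi)-a(\xi_j^\e)|\leq C\|a\|_{H^{s-1}}\e^{s-3/2},
\]
which is small for $\e$ small. Combining this with \eqref{MES} and $\|\pi_j^\e h\|_\infty\leq C\|h\|_{H^{s'-1}}$ (again Sobolev, since $s'-1>1/2$), I would obtain
\[
\|(a-a(\xi_j^\e))\pi_j^\e h\|_{H^{s-1}}\leq C\|a\|_{H^{s-1}}\e^{s-3/2}\|\pi_j^\e h\|_{H^{s-1}} + K_2(\e)\|h\|_{H^{s'-1}},
\]
and an analogous bound for $\|\pi_j^\e(ah)\|_{H^{s-1}}\leq (C\|a\|_{H^{s-1}}\e^{s-3/2}+\|a\|_\infty)\|\pi_j^\e h\|_{H^{s-1}} + K_2(\e)\|h\|_{H^{s'-1}}$.

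To finish I would use the $H^{s-1}$-boundedness of $B_{0,0}$ (a scalar multiple of the Hilbert transform, covered by Lemma~\ref{L:MP0}(ii)) to transfer the penultimate estimate to $C_j B_{0,0}[(a-a(\xi_j^\e))\pi_j^\e h]$; the triangle inequality together with the Lemma~\ref{L:B1} bound then yields
\[
\|\pi_j^\e\oo B_{n,m}^0(f)[ah]-a(\xi_j^\e)C_j B_{0,0}[\pi_j^\e h]\|_{H^{s-1}}\leq \bigl(\nu_1\|a\|_\infty + O(\e^{s-3/2})\bigr)\|\pi_j^\e h\|_{H^{s-1}} + K\|h\|_{H^{s'-1}}.
\]
Choosing first $\nu_1$ small (in terms of $\|a\|_\infty\leq C\|a\|_{H^{s-1}}$) and then $\e$ small makes the coefficient of $\|\pi_j^\e h\|_{H^{s-1}}$ no larger than $\nu$, with $K$ depending on $\e,n,m,\|f\|_{H^s},\|a\|_{H^{s-1}},\|\oo\|_{H^{s-1}}$ as required. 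The main technical obstacle is arranging that the right-hand side contain $\|\pi_j^\e h\|_{H^{s-1}}$ rather than the global $\|h\|_{H^{s-1}}$: this is achieved precisely by commuting $\pi_j^\e$ past the multiplication by $a$ \emph{before} estimating, so that the H\"older smallness of $a-a(\xi_j^\e)$ is exploited exactly on the (short) support of $\pi_j^\e h$.
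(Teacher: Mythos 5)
Your proof is correct, and it takes a genuinely different route from the paper's. The paper proves this by the decomposition
\[
\pi_j^\e\oo B_{n,m}^0(f)[ah]-a(\xi_j^\e)\,\tfrac{\oo(\xi_j^\e)(f'(\xi_j^\e))^n}{[1+(f'(\xi_j^\e))^2]^m}B_{0,0}[\pi_j^\e h]=\oo(T_1+T_2)+a(\xi_j^\e)T_3,
\]
where $T_1$ is the commutator $\pi_j^\e B_{n,m}^0(f)[(a-a(\xi_j^\e))h]-B_{n,m}^0(f)[\pi_j^\e(a-a(\xi_j^\e))h]$ (controlled by Lemma~\ref{L:B0}), $T_2=B_{n,m}^0(f)[\pi_j^\e(a-a(\xi_j^\e))h]$ (controlled by Lemma~\ref{L:MP0}(ii), \eqref{MES}, and the H\"older smallness of $a-a(\xi_j^\e)$ on $\supp\chi_j^\e$), and $T_3$ is exactly the Lemma~\ref{L:B1} expression. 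That is, the paper splits $a=a(\xi_j^\e)+(a-a(\xi_j^\e))$ \emph{inside} the argument of $B_{n,m}^0(f)$, and must therefore invoke the explicit commutator estimate of Lemma~\ref{L:B0} to push $\pi_j^\e$ past the nonlocal operator before exploiting the local smallness of $a-a(\xi_j^\e)$. You instead apply Lemma~\ref{L:B1} directly to the admissible argument $ah\in H^{s-1}$ (legitimacy guaranteed by \eqref{MES} and $s-1>1/2$), which already produces the localized quantity $B_{0,0}[\pi_j^\e(ah)]$; only then do you split $a$, at which point the error $(a-a(\xi_j^\e))\pi_j^\e h$ sits inside the simply $H^{s-1}$-bounded operator $B_{0,0}$ and no further commutator is required. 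Your approach is leaner in that it dispenses with Lemma~\ref{L:B0} here entirely, delegating the commutator work to the already-proved Lemma~\ref{L:B1}; the mild price is that the parameter $\nu_1$ fed into Lemma~\ref{L:B1} must be taken small depending on $\|a\|_\infty$ before $\e$ is chosen, which is a bookkeeping point you handle correctly. Both arguments close in the same way via the H\"older embedding $H^{s-1}\hookrightarrow{\rm C}^{s-3/2}$ and the identity $\chi_j^\e\pi_j^\e=\pi_j^\e$.
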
  
\begin{proof}
Let first $\oo\in H^{s-1}(\mathbb{R})$ (the case $\oo=1$ is similar).
It is suitable to decompose 
\begin{align*}
 \pi_j^\e\oo B_{n,m}^0(f)[ ah]-\frac{a(\xi_j^\e)\oo(\xi_j^\e)(f'(\xi_j^\e))^n}{[1+(f'(\xi_j^\e))^2]^m}B_{0,0}[\pi_j^\e h]=\oo(T_1+T_2)+a(\xi_j^\e)T_3, 
\end{align*}
where
\begin{align*}
T_1&:=\pi_j^\e B_{n,m}^0(f)[(a-a(\xi_j^\e))h]- B_{n,m}^0(f)[\pi_j^\e(a-a(\xi_j^\e))h],\\[1ex]
T_2&:= B_{n,m}^0(f)[ \pi_j^\e(a-a(\xi_j^\e))h],\\[1ex]
T_3&:=\pi_j^\e\oo B_{n,m}^0(f)[ h]-\frac{ \oo(\xi_j^\e)(f'(\xi_j^\e))^n}{[1+(f'(\xi_j^\e))^2]^m}B_{0,0}[\pi_j^\e h].
\end{align*} 
Lemma~\ref{L:B0} yields
\[
\|\oo T_1\|_{H^{s-1}}\leq K\|(a-a(\xi_j^\e))h\|_2\leq K\|h\|_2.
\] 
Besides, using Lemma~\ref{L:MP0}~(ii),  \eqref{MES}, the identity $\chi_j^\e\pi_j^\e=\pi_j^\e$, and the fact that~${a\in {\rm C}^{s-3/2}(\mathbb{R})}$,  we see that
\begin{align*}
\|\oo T_2\|_{H^{s-1}}&\leq C\|\pi_j^\e(a-a(\xi_j^\e))h\|_{H^{s-1}} \leq C\|\chi_j^\e(a-a(\xi_j^\e))\|_\infty\|\pi_j^\e h\|_{H^{s-1}}+K\| h\|_{H^{s'-1}}\\[1ex]
&\leq \frac{\nu}{2} \|\pi_j^\e h\|_{H^{s-1}}+K\| h\|_{H^{s'-1}}
\end{align*}
 provided that $\e$ is sufficiently small (where $C=C(n,m,\|\oo\|_{H^{s-1}}, \|f\|_{H^s})$).
 Since $T_3$ can be estimated by using Lemma~\ref{L:B1}, we have established the desired claim.
\end{proof}

 Lemma~\ref{L:B5} below can be seen as the analogue of Lemma~\ref{L:B4} corresponding to the case~${j=N}$.
 
\begin{lemma}\label{L:B5} 
Let $n,\, m \in \N$, $3/2<s'<s<2$, and  $\nu\in(0,\infty)$ be given. 
Let further  $f\in H^s(\mathbb{R})$, $a\in H^{s-1}(\mathbb{R})$, and $\oo\in \{1\}\cup H^{s-1}(\mathbb{R})$.
For any sufficiently small $\e\in(0,1)$, there is
a constant $K$ depending on $\e,$ $ n,$ $ m,$ $ \|f\|_{H^s},$  $\|a\|_{H^{s-1}},$ and $ \|\oo\|_{H^{s-1}}$ (if $\oo\neq1$)  such that 
 \begin{equation*} 
\begin{aligned}
  &\|\pi_N^\e\oo B_{n,m}^0(f)[ ah]\|_{H^{s-1}}\leq \nu \|\pi_N^\e h\|_{H^{s-1}}+K\| h\|_{H^{s'-1}}
\end{aligned}  
 \end{equation*}
for  all $h\in H^{s-1}(\mathbb{R})$.
\end{lemma}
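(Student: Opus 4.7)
The plan is to mirror the proof of Lemma~\ref{L:B4}: there, the Hölder smallness of $a - a(\xi_j^\e)$ on $\operatorname{supp}\chi_j^\e$ drove the estimate; here, for $j = N$, its role will be played by the decay at infinity of $a \in H^{s-1}(\R) \hookrightarrow C^{s-3/2}(\R)$, which ensures that $\|\chi_N^\e a\|_\infty$ can be made arbitrarily small by shrinking $\e$. Observe also that $ah \in H^{s-1}(\R)$ since $H^{s-1}(\R)$ is a Banach algebra (as $s - 1 > 1/2$), so all the nonlinear operators that appear below are well-defined.

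First, I would apply Lemma~\ref{L:B2} (when $\oo \in H^{s-1}(\R)$) or Lemma~\ref{L:B3} (when $\oo = 1$) with $h$ replaced by $ah$, to obtain in each case an estimate of the form
\begin{equation*}
\|\pi_N^\e \oo B_{n,m}^0(f)[ah]\|_{H^{s-1}} \leq C\,\|\pi_N^\e(ah)\|_{H^{s-1}} + K\|ah\|_{H^{s'-1}},
\end{equation*}
for a generic constant $C$. In the sub-case $\oo = 1$, $n = 0$, the auxiliary term $\|B_{0,0}[\pi_N^\e(ah)]\|_{H^{s-1}}$ that appears on the left-hand side of Lemma~\ref{L:B3} is transferred to the right-hand side and absorbed into the first term via the $H^{s-1}$-boundedness of $B_{0,0}$ (a multiple of the Hilbert transform).

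Next, the identity $\chi_N^\e \pi_N^\e = \pi_N^\e$ yields $\pi_N^\e(ah) = (\chi_N^\e a)\,\pi_N^\e h$. Combining the product inequality~\eqref{MES} with the embedding $H^{s'-1}(\R) \hookrightarrow L_\infty(\R)$ (valid since $s' - 1 > 1/2$) then gives
\begin{equation*}
\|\pi_N^\e(ah)\|_{H^{s-1}} \leq 2\|\chi_N^\e a\|_\infty\,\|\pi_N^\e h\|_{H^{s-1}} + K\|h\|_{H^{s'-1}},
\end{equation*}
while the Banach algebra bound $\|ah\|_{H^{s'-1}} \leq K\|h\|_{H^{s'-1}}$ (via \eqref{MES} and $H^{s-1}\hookrightarrow L_\infty$) handles the second right-hand term. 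Since $\|\chi_N^\e a\|_\infty \to 0$ as $\e \to 0$, one can then select $\e$ small enough that $2C\|\chi_N^\e a\|_\infty \leq \nu$, at which point the desired estimate follows.

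The essential observation---and the only non-routine step---is the use of the decay of $a$ at infinity; there is no serious technical obstacle, since the heavy lifting has already been done in Lemmas~\ref{L:B2} and~\ref{L:B3}. The main bookkeeping concerns the case distinction $\oo = 1$ versus $\oo \in H^{s-1}$, together with the proper absorption of the leftover $B_{0,0}$ term in the sub-case $(n,\oo) = (0,1)$.
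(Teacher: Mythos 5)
Your proof is correct, and it reaches the conclusion along a route that differs from the one the paper actually uses. The paper's proof of Lemma~\ref{L:B5} mirrors that of Lemma~\ref{L:B4}: it splits off a commutator, writing
$\pi_N^\e\oo B_{n,m}^0(f)[ah]=\oo(T_1+T_2)$ with $T_1:=\pi_N^\e B_{n,m}^0(f)[ah]-B_{n,m}^0(f)[\pi_N^\e ah]$ and $T_2:=B_{n,m}^0(f)[\pi_N^\e ah]$, then estimates $\|\oo T_1\|_{H^{s-1}}\leq K\|ah\|_2\leq K\|h\|_2$ by the raw commutator bound of Lemma~\ref{L:B0}, and bounds $\|\oo T_2\|_{H^{s-1}}\leq C\|\pi_N^\e ah\|_{H^{s-1}}$ by the $H^{s-1}$-boundedness of $B_{n,m}^0(f)$ (Lemma~\ref{L:MP0}(ii)) before finally using $\pi_N^\e ah=(\chi_N^\e a)(\pi_N^\e h)$, \eqref{MES}, and the decay of $a$. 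You instead treat Lemmas~\ref{L:B2} and~\ref{L:B3} (applied with $h$ replaced by $ah$) as black boxes that already encode the localization, absorb the extra $B_{0,0}$ remainder in the sub-case $(n,\oo)=(0,1)$, and only then factor $\pi_N^\e(ah)=(\chi_N^\e a)(\pi_N^\e h)$ and exploit decay. Both proofs hinge on the same key observation --- $\|\chi_N^\e a\|_\infty\to 0$ as $\e\to 0$ since $a\in H^{s-1}(\R)$ vanishes at infinity --- but yours avoids re-invoking Lemma~\ref{L:B0} at the cost of one extra case distinction, which is a reasonable, slightly more modular alternative. Two tiny imprecisions: the bound $\|ah\|_{H^{s'-1}}\leq K\|h\|_{H^{s'-1}}$ is not literally an instance of \eqref{MES} (which is stated at level $s-1$) but of its analogue at level $s'-1>1/2$, or simply the algebra property of $H^{s'-1}(\R)$; and the constant you call ``$C$'' after applying Lemma~\ref{L:B2}/\ref{L:B3} is really a freely chosen $\nu_0$ whose fixing requires taking $\e$ already small --- harmless, but worth stating explicitly.
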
  
\begin{proof}
It is suitable to decompose 
\begin{align*}
 \pi_j^\e\oo B_{n,m}^0(f)[ah]=\oo (T_1+ T_2), 
\end{align*}
where
\begin{align*}
T_1&:=\pi_j^\e B_{n,m}^0(f)[ ah]- B_{n,m}^0(f)[ \pi_j^\e ah],\\[1ex]
T_2&:= B_{n,m}^0(f)[\pi_j^\e a h].
\end{align*}
Lemma~\ref{L:B0} yields
\[
\|\oo T_1\|_{H^{s-1}}\leq K\|a h\|_2\leq K\|h\|_2.
\] 
Moreover, invoking Lemma~\ref{L:MP0}~(ii),  \eqref{MES}, the identity $\chi_j^\e\pi_j^\e=\pi_j^\e$, and the fact that $a$ vanishes at infinity,  we find
\begin{align*}
\|\oo T_2\|_{H^{s-1}}&\leq C\|\pi_j^\e a h\|_{H^{s-1}} \leq C\|\chi_j^\e a \|_\infty\|\pi_j^\e h\|_{H^{s-1}}+K\| h\|_{H^{s'-1}}\\[1ex]
&\leq  \nu  \|\pi_j^\e h\|_{H^{s-1}}+K\| h\|_{H^{s'-1}}
\end{align*}
 provided that $\e$ is sufficiently small.
This completes the proof.
\end{proof}

{\bf Acknowledgements:} The research leading to this paper was carried out while the second author enjoyed the hospitality of DFG Research Training Group 2339
``Interfaces, Complex Structures, and Singular Limits in Continuum Mechanics - Analysis and Numerics'' at the Faculty of Mathematics of Regensburg University. 

\bibliographystyle{abbrv}
\bibliography{MP}

\begin{thebibliography}{10}

\bibitem{AbMa20x}
H.~Abels and B.-V. Matioc.
\newblock {Well-posedness of the {M}uskat problem\ in subcritical $L_p$-Sobolev
  spaces}.
\newblock 2020.
\newblock arXiv:2003.07656.

\bibitem{Am95}
H.~Amann.
\newblock {\em {Linear and Quasilinear Parabolic Problems. {V}ol. {I}}},
  volume~89 of {\em {Monographs in Mathematics}}.
\newblock Birkh\"auser Boston, Inc., Boston, MA, 1995.
\newblock Abstract linear theory.

\bibitem{An90}
S.~B. Angenent.
\newblock {Nonlinear analytic semiflows}.
\newblock {\em Proc. Roy. Soc. Edinburgh Sect. A}, 115(1-2):91107, 1990.

\bibitem{BaDu98}
A.~Badea and J.~Duchon.
\newblock Capillary driven evolution of an interface between viscous fluids.
\newblock {\em Nonlinear Anal.}, 31(3-4):385--403, 1998.

\bibitem{ES96}
J.~Escher and G.~Simonett.
\newblock {Analyticity of the interface in a free boundary problem}.
\newblock {\em Math. Ann.}, 305(3):439--459, 1996.

\bibitem{G17}
F.~Gancedo.
\newblock A survey for the {M}uskat problem and a new estimate.
\newblock {\em SeMA J.}, 74(1):21--35, 2017.

\bibitem{GL20}
{Granero-Belinch\'on, Rafael} and {Lazar, Omar}.
\newblock Growth in the {M}uskat problem.
\newblock {\em Math. Model. Nat. Phenom.}, 15:7, 2020.

\bibitem{Lad63}
O.~A. Ladyzhenskaya.
\newblock {\em The mathematical theory of viscous incompressible flow}.
\newblock Revised English edition. Translated from the Russian by Richard A.
  Silverman. Gordon and Breach Science Publishers, New York-London, 1963.

\bibitem{L95}
A.~Lunardi.
\newblock {\em {Analytic Semigroups and Optimal Regularity in Parabolic
  Problems}}.
\newblock {Progress in Nonlinear Differential Equations and their Applications,
  16}. Birkhäuser Verlag, Basel, 1995.

\bibitem{MBV18}
B.-V. Matioc.
\newblock {Viscous displacement in porous media: the Muskat problem in 2D}.
\newblock {\em Trans. Amer. Math. Soc.}, 370(10):7511--7556, 2018.

\bibitem{MBV19}
B.-V. Matioc.
\newblock {The Muskat problem in two dimensions: equivalence of formulations,
  well-posedness, and regularity results}.
\newblock {\em Anal. PDE}, 12(2):281--332, 2019.

\bibitem{PSS15}
J.~Pr\"uss, Y.~Shao, and G.~Simonett.
\newblock {On the regularity of the interface of a thermodynamically consistent
  two-phase {S}tefan problem with surface tension}.
\newblock {\em Interfaces Free Bound.}, 17(4):555--600, 2015.

\end{thebibliography}

\end{document}